\documentclass{rmmcart}
\usepackage{graphicx,overpic}
\usepackage[left=3cm, top=2cm, right=3cm, bottom=3cm]{geometry}
\usepackage{listings}
\usepackage{float}
\usepackage{amssymb,amsmath,mathtools}
\usepackage{enumerate}
\usepackage[numbers,square]{natbib}
\usepackage{wrapfig}
\usepackage{braket}%big set builder
\usepackage{tikz-cd} %commutative diagrams

\usepackage[utf8]{inputenc}
\usepackage[english]{babel} 

\usepackage{amsthm}
\newtheorem{thm}{Theorem}[section]
\newtheorem{lem}[thm]{Lemma}
\newtheorem{prop}[thm]{Proposition}
\newtheorem{cor}[thm]{Corollary}
\numberwithin{equation}{section}

\theoremstyle{definition}
\newtheorem{defn}[thm]{Definition} % definition numbers are dependent on theorem numbers
 % same for example numbers
\newtheorem{remk}[thm]{Remark}

\newenvironment{claim}[1]{\par\noindent\underline{Claim:}\space#1}{}
\newenvironment{claimproof}[1]{\par\noindent\underline{Proof:}\space#1}{\leavevmode\unskip\penalty9999 \hbox{}\nobreak\hfill\quad\hbox{$\blacksquare$}}% for claims within a proof

\makeatletter
\newenvironment{breakproof}[1][\proofname]{\par
  \pushQED{\qed}%
  \normalfont \topsep6\p@\@plus6\p@\relax
  \trivlist
  \item[\hskip\labelsep
        \itshape
    #1\@addpunct{.}]\ignorespaces\item
}{%
  \popQED\endtrivlist\@endpefalse
}
\makeatother

%TO GET THE DOUBLE ANGLE BRACKETS
\makeatletter
\newsavebox{\@brx}
\newcommand{\llangle}[1][]{\savebox{\@brx}{\(\m@th{#1\langle}\)}%
	\mathopen{\copy\@brx\kern-0.5\wd\@brx\usebox{\@brx}}}
\newcommand{\rrangle}[1][]{\savebox{\@brx}{\(\m@th{#1\rangle}\)}%
	\mathclose{\copy\@brx\kern-0.5\wd\@brx\usebox{\@brx}}}
\makeatother

\title{Agol's theorem on hyperbolic cubulations}

\author{Sam Shepherd}
\address{Mathematical Institute, University of Oxford, Oxford, United Kingdom}
\email{sam.shepherd@balliol.ox.ac.uk}

\date{October, 29, 2020}

\keywords{CAT(0) cube complex, special cube complex, hyperbolic group}
\subjclass{20F67 (Primary) 57M60 (Secondary)}

\begin{document}
	
\begin{abstract}
	Agol proved that hyperbolic cubulated groups are virtually special. The aim of these notes is to make the proof accessible to a wider audience; we retain the underlying ideas and constructions of Agol, but substantially change or add to many parts of the argument to give a more transparent and detailed account.
\end{abstract}
\maketitle

\section{Introduction}

More precisely, the following theorem was proven by Agol in \cite{VHC}. Throughout we will always write `Agol's paper' to refer to \cite{VHC}.

\begin{thm}\label{mainthm}(Agol's Theorem)\\
Let $G$ be a hyperbolic group acting properly and cocompactly on a CAT(0) cube complex $X$. Then $G$ has a finite index subgroup $G'$ that acts freely on $X$ such that the quotient $X/G'$ is special.
\end{thm}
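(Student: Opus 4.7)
The plan is to prove Theorem \ref{mainthm} by strong induction on $\dim X$. The base case $\dim X = 0$ is trivial since then $G$ acts on a discrete set and is finite. For the inductive step, the first thing I would do is set up the inductive hypothesis carefully: each hyperplane $Y \subset X$ is itself a $\mathrm{CAT}(0)$ cube complex of dimension $\dim X - 1$, and its $G$-stabilizer $H_Y$ acts on $Y$ properly and cocompactly. Since hyperplanes are convex in $X$ and $G$ is hyperbolic, each $H_Y$ is quasiconvex in $G$, and by the induction hypothesis $H_Y$ is virtually special.

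The second step would be to reduce the problem of virtual specialness to removing the four standard Haglund--Wise hyperplane pathologies: self-intersection, one-sidedness, self-osculation, and inter-osculation. Three of these, namely self-intersection, one-sidedness, and direct self-osculation, can be eliminated by passing to a finite-index subgroup of $G$: using residual finiteness and double-coset separability that follow from virtual specialness of the hyperplane stabilizers $H_Y$ (via Haglund--Wise), one can separate finitely many problematic cosets. This establishes that $G$ has a finite-index subgroup whose quotient cube complex has embedded, two-sided, non-self-osculating hyperplanes.

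The third and hardest step is to eliminate inter-osculation while preserving the earlier fixes. Here the main tool is Wise's Malnormal Special Quotient Theorem (MSQT): for a hyperbolic virtually special group with almost malnormal quasiconvex subgroups $H_1, \ldots, H_n$, one may quotient by sufficiently deep finite-index normal subgroups $N_i \trianglelefteq H_i$ and the resulting group remains hyperbolic and virtually special. I would apply this by first constructing a coloring of the $G$-orbits of hyperplanes of $X$ with finitely many colors, arranged so that same-colored hyperplanes neither cross nor inter-osculate; then MSQT provides a virtually special hyperbolic quotient $\bar G$ of $G$ in which the colored cubulation descends well, and finally pulling back a torsion-free finite-index special subgroup of $\bar G$ along the quotient map yields the desired $G' \leq G$.

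The principal obstacle is the construction of the finite coloring. Hyperbolicity of $G$ and quasiconvexity of hyperplane stabilizers provide bounded-penetration estimates and uniform control on coarse intersections of hyperplane neighborhoods, but orchestrating these into a coloring with \emph{finitely many} colors that simultaneously defeats inter-osculation across all $G$-orbits of hyperplanes, without resurrecting pathologies 1--3 after applying MSQT, is the core difficulty. This is where Agol's deepest innovations reside and will occupy the bulk of the argument; I expect it to require a delicate iterative or inductive construction, carefully balancing the depth of the filling kernels $N_i$ against the combinatorics of hyperplane interactions.
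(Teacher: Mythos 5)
Your proposal gets several global features right (induction on $\dim X$, the inductive hypothesis that wall stabilisers are virtually special, the centrality of MSQT and of a finite colouring of hyperplanes), but the core of the argument as you describe it does not work, and the most serious problem is a circularity in your second step. By Haglund--Wise (Theorem \ref{subgroupsep}), separability of the quasi-convex wall stabilisers \emph{in $G$} is equivalent to the virtual specialness of $X/G'$ that you are trying to prove. The inductive hypothesis only gives that each $H_Y$ is virtually special as an abstract group, i.e.\ that quasi-convex subgroups of $H_Y$ are separable \emph{inside $H_Y$}; it gives no way to separate $H_Y$, or cosets of $H_Y$, from elements of $G$. So the claim that one can ``separate finitely many problematic cosets'' to kill self-intersection and self-osculation in a finite cover is exactly the statement being proved, not a tool available at that stage. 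What the inductive hypothesis actually buys (via the appendix Theorem \ref{A1theorem}, which itself rests on MSQT) is much weaker: a quotient $\phi:G\to\mathcal{G}$, in general infinite and not hyperbolic, in which $\phi(H_Y)$ is finite and avoids finitely many prescribed elements. The paper uses this not to remove pathologies in a finite cover but to build the quotient complex $\mathcal{X}=X/K$ with \emph{finite} walls, which is a prerequisite for the colouring: a wall of $X$ that is infinite can lie within bounded distance of infinitely many other walls, so no finite colouring of the walls of $X$ separating nearby walls need exist (and a colouring constant on the finitely many $G$-orbits cannot separate a wall from its own nearby translates).

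Your third step also misdescribes the mechanism. The full collection of wall stabilisers is not an almost malnormal family, so one cannot feed ``the coloured hyperplane stabilisers'' into MSQT and pull back a special subgroup of the quotient $\bar G$; moreover the colouring in the actual proof is not chosen so that same-coloured hyperplanes avoid crossing or inter-osculating, but so that walls of $\mathcal{X}$ within distance $R$ receive different colours. The real architecture is a cut-and-reglue hierarchy: cut $X$ along all walls, then reglue one colour at a time, maintaining collections $\mathcal{V}_j$ of convex pieces equipped with local colouring data and satisfying Gluing Equations (solved using a $G$-invariant probability measure on the space of colourings, Theorem \ref{measure}, followed by linear programming). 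At each stage virtual specialness is preserved by Wise's $\mathcal{QVH}$ characterisation (Theorem \ref{qvhspecial}) together with the Bestvina--Feighn combination theorem, and the colouring is what guarantees the gluing walls form a malnormal, acylindrical family so that these theorems apply; MSQT enters only through Theorem \ref{A1theorem} and through Theorem \ref{thm:command}, which is used to pass to finite covers of the pieces so that matched portals become isomorphic. The final $G'$ is the group acting on the last glued-up piece $Z=X$ in $\mathcal{V}_0$, not a pullback from a Dehn-filling quotient.
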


\subsection{Background}
	
Agol's Theorem spans two areas of activity within geometric group theory. On the one hand there has been a lot of work about \textit{cubulating} groups: finding for a given group a proper cocompact action on a CAT(0) cube complex. On the other hand there is the theory of special cube complexes.

The definition of CAT(0) cube complex goes back to Gromov \cite{Gromov}, but the topic of cubulating groups, which has been driven largely by Dani Wise, begins with Sageev's construction of a CAT(0) cube complex from a pocset, which provides a cubulation of a group $G$ if it acts on a pocset in a certain way (see \cite{Sageev} for instance). Many classes of groups have been cubulated using this method, such as the Coxeter groups in \cite{coxeter} and small cancellation groups in \cite{smallcanc}, and we will see more examples later in the introduction. A general criterion for cubulating hyperbolic groups was given in \cite{boundarycrit}. Consequences of cubulating a group include that the group is bi-automatic \cite{biautomatic}, and that it satisfies the Tits Alternative, namely every subgroup is either virtually finitely generated abelian or has a non-abelian free subgroup \cite{titsalt}.

Special cube complexes were introduced by Haglund and Wise in \cite{HW}. We say that a compact nonpositively curved cube complex is \emph{virtually special} if it has a finite-sheeted special cover, and we say that a group is \emph{virtually special} if it has a finite index subgroup that is the fundamental group of a compact special cube complex. Virtually special groups have even stronger properties than cubulated groups; to start with it was proven in \cite{HW} that they are virtually subgroups of right-angled Artin groups, they are subgroups of $SL_n(\mathbb{Z})$, and they are residually finite. 
Cube complexes contain certain characteristic subspaces called \emph{walls} (see Section \ref{sec:background}), and a principal way of analysing a cubulated group $G$ is to study its wall stabilisers (which form a finite collection of subgroups up to conjugation). If $G$ is hyperbolic and cubulated, then it is virtually special if and only if every wall stabiliser is separable \cite{HW}; moreover, this is also equivalent to every quasi-convex subgroup of $G$ being separable. A strengthened Tits Alternative was proven for virtually special groups in \cite[14.10]{qconvexh}, namely every subgroup is virtually abelian or large (has a finite index subgroup surjecting to $\mathbb{Z}\ast\mathbb{Z}$).

Previously, many hyperbolic groups were known to be cubulated but not known to be virtually special, so Agol's Theorem confirmed that all of these groups are in fact virtually special, and hence satisfy all the strong properties discussed above. Examples of such groups include metric small cancellation groups \cite{smallcanc} and closed hyperbolic 3-manifold groups - the latter can be cubulated using Kahn and Markovich's construction of quasi-fuchsian subgroups, together with work of Bergeron and Wise \cite{boundarycrit}. Small cancellation groups arise as random groups with respect to the Gromov density model, so Agol's Theorem implies that at low density random groups are virtually special, and thus residually finite. Combined with the cubulations of Niblo and Reeves \cite{coxeter}, Agol's Theorem also recovers the result of Haglund and Wise that hyperbolic Coxeter groups are virtually special. Other important classes of hyperbolic groups have been cubulated since Agol's Theorem, so again it could be deduced straight away that they are virtually special. For example Hagen and Wise cubulated hyperbolic free-by-cyclic groups in \cite{free-by-cyclic}, and Manning, Mj and Sageev cubulated certain hyperbolic surface-by-free groups in \cite{surface-by-free}.

The example of closed hyperbolic 3-manifold groups mentioned above is particularly significant. For $M$ a closed hyperbolic 3-manifold, the virtual specialness (and consequent largeness) of $\pi_1(M)$ established by Agol's Theorem provided an affirmative solution to Thurston's long-standing conjecture that $M$ has a finite-sheeted cover that fibres over the circle (see \cite{icm} for more details on this). It also provided the final piece in the puzzle to prove the Virtual Haken Conjecture, that $M$ has a finite-sheeted Haken cover, hence the title to Agol's paper. A more subtle consequence of Agol's Theorem is that $\pi_1(M)$ is good in the sense of Serre \cite{Serre}; this has significant implications for the profinite rigidity of 3-manifold groups - see \cite{BF} and \cite{BR}.

A major open problem in geometric group theory is the question of whether all hyperbolic groups are residually finite. Agol's Theorem gives a step towards a positive answer by showing that all hyperbolic cubulated groups are residually finite. Note that finite extensions of hyperbolic cubulated groups (i.e. groups that surject to a hyperbolic cubulated group with finite kernel) are still hyperbolic and cubulated, so residual finiteness is closed under finite extensions for these groups - something which is unknown for hyperbolic groups in general. Agol's Theorem can never show that all hyperbolic groups are residually finite however, as there are examples of hyperbolic groups with Kazhdan's Property (T), which have a global fixed point for any action on a CAT(0) cube complex, as follows from \cite{T}. Such groups include uniform lattices in quaternionic hyperbolic spaces and random groups in Gromov's density model for certain medium densities.

In \cite{qconvexh} Wise defines the \emph{quasi-convex hierarchy} of hyperbolic groups, which starts with finite groups and builds up via graphs of groups with quasi-convex edge groups (see Definition \ref{QVH}). He proves that a hyperbolic group is in this hierarchy if and only if it is cubulated and virtually special. This result has been used extensively in the literature to show that various hyperbolic groups are cubulated and virtually special, and it plays a key role in the proof of Agol's Theorem. Moreover, one can view Agol's Theorem as saying that this hierarchy is as large as possible, i.e. that it contains all hyperbolic cubulated groups.

Agol's theorem has been generalised to relatively hyperbolic groups by Or\'egon-Reyes \cite{relhyp1} and Groves and Manning \cite{relhyp2}, each with slightly different (but equivalent) assumptions on how the peripheral subgroups act on the cube complex $X$. These generalisations imply the virtual specialness of non-compact finite-volume hyperbolic 3-manifold groups and of limit groups, recovering theorems of Wise \cite{qconvexh}. In \cite{specialcellstab} Groves and Manning generalise Agol's Theorem in a different way by proving that any hyperbolic group acting cocompactly on a CAT(0) cube complex with quasi-convex and virtually special cell stabilisers is virtually special. A more general form of residual finiteness, introduced by Agol in \cite{Agol} and that is satisfied by virtually special groups, is being residually finite rationally solvable (RFRS). In \cite{Kielak} Kielak proved a group theoretic analogue of Thurston's Virtual Fibering Conjecture (which was settled by Agol's Theorem), showing that virtually RFRS groups with vanishing first $L^2$-Betti number are virtually fibred in the sense that they have finite index subgroups that surject to $\mathbb{Z}$.

Another active direction of current research in the field is studying non-hyperbolic virtually special groups. For instance, it is known that hyperbolic Coxeter groups are virtually special, but it is unknown whether all Coxeter groups are virtually special, or even virtually cubulated. More generally, there is as yet no analogue to Wise's quasi-convex hierarchy for non-hyperbolic groups, in particular it is not well understood when an amalgam of two virtually special groups is virtually special - Huang and Wise have a result in this direction \cite{specialgraph}, but it requires that the vertex groups are hyperbolic.
There is another sort of hierarchy that virtually special groups do admit: in \cite{HHG} Behrstock, Hagen and Sisto define the notion of a hierarchically hyperbolic group, and prove that these include all virtually special groups. Hierarchical hyperbolicity is a way of unifying techniques and concepts across quite a broad class of groups, including many non-cubulated groups like mapping class groups.

Agol poses many open questions in his ICM article \cite{icm}, as does Wise in his ICM article \cite{openprobs}. For example, given a compact nonpositively curved cube complex $X$, is there an algorithm that decides whether $X$ is virtually special? And is virtual specialness independent of the choice of cubulation? i.e. if $X$ is a compact nonpositively curved cube complex and $\pi_1 X$ is virtually special, is $X$ virtually special? It would also be interesting to have more examples of compact nonpositively curved cube complexes that are not virtually special - the currently known examples can all be traced to irreducible complete square complexes such as in \cite{wise}. Agol's Theorem implies that compact hyperbolic 3-manifold groups are $\mathbb{Z}$-linear, but it is unknown whether this extends to all compact 3-manifold groups (Liu \cite{Liu} and Przytycki-Wise \cite{PW} show that a compact aspherical  3-manifold has virtually special fundamental group if and only if it is nonpositively curved, so this goes beyond a question of virtual specialness).
And finally, do all closed hyperbolic manifolds of dimension greater than three have cubulated fundamental groups? (If so they would be virtually special.)

\subsection{Proof strategy}

The strategy of the proof of Theorem \ref{mainthm} is to cut $X$ along all of its walls and start with orbit representatives of these pieces; then glue these back together one wall at a time until we get a cube complex with universal cover $X$ and deck transformations a subgroup of $G$. At each stage we will have a collection of finite virtually special cube complexes whose universal covers are convex subspaces of $X$. To ensure the complexes remain virtually special at each stage we employ theorems from \cite{qconvexh} and \cite{comb}. The main condition we need for these theorems is that the walls we want to glue along are acylindrical subspaces; this requires keeping preimages of these gluing walls far apart in the universal cover. This is where the key idea of Agol's proof comes in: we endow each vertex in each complex with local data that colours nearby walls in the universal cover in such a way that: 
\begin{itemize}
\item finitely many colours are used,
\item the colours determine which walls are preimages of gluing walls,
\item walls that are close have different colours,
\item adjacent vertices have compatible colouring data, in that they agree about the colours of nearby walls,
\item if there are two vertices with compatible colouring data next to different gluing walls, then all vertices next to these walls have compatible colouring data (allowing us to `zip' the walls together).
\end{itemize}
That last point is important as it ensures that the colouring data retains its properties from one stage of the construction to the next. Devising colourings that satisfy all this is difficult, in fact it requires a more complicated definition of local colouring than simply colouring in walls that are near to the vertex in the universal cover - and the presence of infinite walls in the universal cover further messes things up. So what we actually do is pass to a quotient complex $\mathcal{X}=X/K$ with finite walls, and we colour walls in $\mathcal{X}$ rather than $X$ using local colourings that form a sort of `hierarchy'. The existence of a suitable $\mathcal{X}$ relies on knowing that the wall stabilisers for the action of $G$ on $X$ already satisfy Theorem \ref{mainthm} (which we are safe to assume by induction on the dimension of $X$) - plus it depends on the appendix to Agol's paper, which in turn relies on Wise's Malnormal Special Quotient Theorem. Note that we do not revisit the proofs contained in the appendix to Agol's paper. The hierarchy of coloured walls we use depends on Wise's quasi-convex hierarchy of hyperbolic groups that we discussed earlier.

\subsection{Summary of sections}
	
\begin{enumerate}
\setcounter{enumi}{1}
\item Background on walls: We provide some basic theory about walls in cube complexes, with a focus on CAT(0) cube complexes.
\item Special cube complexes: We recall some results about special cube complexes, which underpin many later arguments.
\item Making walls finite: This section constructs the quotient complex $\mathcal{X}$ with finite walls.
\item Invariant colouring measures: The main result here is the existence of a measure on the space of colourings of a graph.
\item Colouring walls: This is where we define the local colouring data.
\item Starting the gluing construction: Here we set up the main inductive construction, which at each stage gives a collection of finite virtually special cube complexes. We use Section 5 to carefully choose colouring data for the first stage of the construction, to ensure that later on we can always pair up gluing walls with compatible colouring data.
\item Controlling boundary walls: Here we prove the zipping property of gluing walls mentioned above, and prove that gluing walls are acylindrical subspaces.
\item Gluing up walls: In this final section we perform the inductive step by gluing the walls together; the subtlety here is that two gluing walls with compatible colouring data might be different shapes (so the zipping gets stuck), to solve this we take finite covers of our complexes.
\end{enumerate}
The sections with the most significant deviations from Agol's proof are 4, 7, 8 and 9.

\textbf{Acknowledgements:}\,
Thanks to Ric Wade and my supervisor Martin Bridson for their careful proofreading and helpful comments. Thanks also to the referee for further comments and suggestions. And thanks to Daniel Woodhouse for explaining Theorem \ref{thm:command} to me, which I used to simplify some of the arguments in Section \ref{sec:gluing}.

\section{Background on walls}\label{sec:background}

In this section we cover some basic definitions and results about cube complexes and their walls; most of this material can be found in \cite{HW}. 

 \begin{defn}(Cube complex and metric)\\
 A \textit{cube complex} is a metric polyhedral complex in which all polyhedra are unit Euclidean cubes. For $X$ a cube complex, $V(X)$ will denote the vertex set and $E(X)$ will denote the edge set. All edges in these notes will be unoriented. We will denote the metric by $d$ - and also use $d$ for the distance between subsets of a cube complex, $d(A,B)=\inf\{d(x,y)|\,x\in A,\, y\in B\}$. Any finite dimensional cube complex is a complete geodesic space \cite[I.7.33]{nonpos}. 
 
 Another metric which is commonly used on the vertex set of a cube complex, referred to as the $\ell_1$ or \emph{combinatorial metric}, defines the distance between two vertices to be the length of a shortest edge path between them. We will not use this metric directly, although we will often consider shortest edge paths.
 \end{defn}

\begin{defn} (Walls)\\
An $n$-cube $C=[-1,1]^n\subset \mathbb{R}^n$ has $n$ midcubes, each obtained by setting one coordinate to zero. The face of a midcube of $C$ is naturally the midcube of a face of $C$, and so the collection of midcubes of a cube complex $X$ can be given the structure of a cube complex. We call this the \textit{wall complex}; it has a natural immersion $q: \mathcal{W}\to X$ induced by inclusions of the midcubes - note this immersion is neither combinatorial nor an embedding, but it can be made combinatorial by passing to the barycentric subdivisions of $\mathcal{W}$ and $X$. A component of the wall complex is called a \textit{wall} (it is also called a hyperplane by other authors), but when talking about a wall we will usually be referring to its image under $q$. A picture of a wall in a cube complex is given below.
\end{defn}
\begin{wrapfigure}{r}{.5\textwidth}
	\centering
	\includegraphics[width=0.4\textwidth, clip=true ]{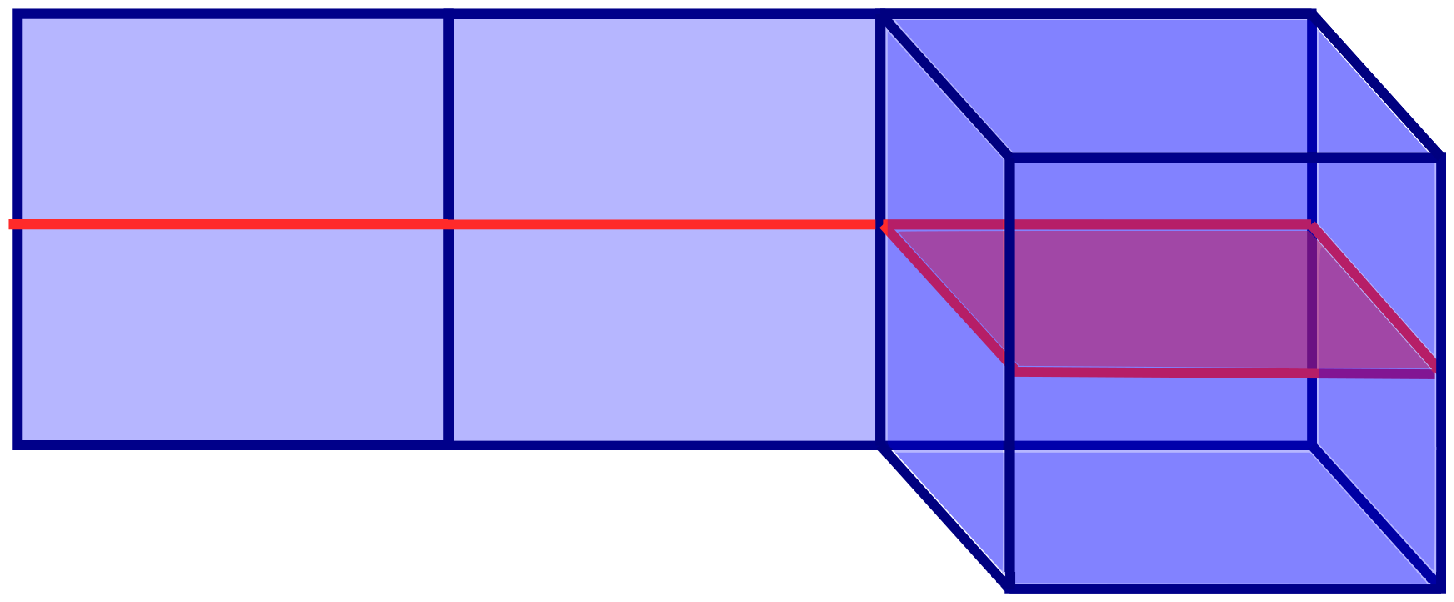}
\end{wrapfigure}

For an edge $e$ in a cube complex $X$, write $W(e)$ for the unique wall that it intersects, and say that $e$ is \textit{dual} to $W(e)$. If a group $G$ acts on $X$ then it also acts on the set of walls, and clearly $g W(e)=W(g e)$ for $g\in G$, $e\in E(X)$. For a wall $W$, let $G_W$ denote its stabiliser. Given an edge $e$ dual to $W$, $g\in G_W$ if and only if $W(ge)=W$ - thus $G_W$ is also the stabiliser of the set of edges dual to $W$.

\begin{remk}\label{cocompactstabiliser}
If the action of $G$ on $X$ is proper and cocompact then so is the action of $G_W$ on $W$. Properness is immediate, and cocompactness is a consequence of the following easy argument. Let $\{e_1,...,e_k\}$ be edges dual to $W$ which are representatives of those $G$-orbits of edges in $X$ that include edges dual to $W$. For an edge $e$ dual to $W$, there exists $g\in G$ with $ge=e_j$ some $1\leq j\leq k$, but then $g\in G_W$. Therefore the union of midcubes that intersect $\{e_1,...,e_k\}$ is a compact subset of $W$ with $G_W$-translates that cover $W$.
\end{remk}

\begin{defn}(NPC and CAT(0) cube complexes)\\
A cube complex is \textit{nonpositively curved} (we will use the shorthand NPC) if the link of each vertex is a flag complex, and an NPC cube complex is \textit{CAT(0)} if it is simply connected. For the general definition of a CAT(0) space, and why it is equivalent to ours for cube complexes, see \cite[II.1 and II.5.20]{nonpos}.
\end{defn}

For any cube complex $X$, the map $q:\mathcal{W}\to X$ is a local isometry. If $X$ is CAT(0) we have that, for any wall $W$, $q:W\to X$ is an embedding with convex image (because in a CAT(0) space, local geodesics  are geodesics and geodesics are unique), and so by identifying $W$ with its image we can view $W$ as a closed convex subspace of $X$. In particular, each cube of $X$ will have at most one midcube belonging to $W$.

\begin{defn}\label{twoside}
	For $W$ a wall in any cube complex, let $N(W)$ be the union of cubes that intersect $W$. Define an equivalence relation on the vertices of $N(W)$ in which $x\sim y$ if $x, y$ are joined by an edge path in $N(W)$ that never crosses $W$.   
\end{defn}

\begin{prop}\label{1or2sides}
	If $W$ is an \textit{embedded} wall in $X$, meaning that $q:W\to X$ is an embedding, then there will either be one or two $\sim$-equivalence classes. If there are two then we say that $W$ is \textit{two-sided} and we denote the two classes of vertices by $W^+$ and $W^-$.
\end{prop}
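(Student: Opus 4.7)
My strategy is to identify the $\sim$-equivalence classes of vertices with the path-components of $N(W)\setminus q(W)$, and then show the latter has at most two components by exhibiting $N(W)\setminus q(W)$ as a $2$-sheeted cover of $W$.

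First I will work locally in a single cube $C$ of $X$ meeting $W$. Since $W$ is embedded, $C$ contains exactly one midcube $M$ of $W$ (else $q$ would identify two midcubes of $W$ inside $C$, contradicting injectivity). The midcube $M$ separates $C$ into two open half-cubes $C^+$ and $C^-$, each containing exactly half of the vertices of $C$; any two vertices of a single half $C^{\epsilon}$ are joined by an edge path inside $C$ that changes only coordinates parallel to $M$, hence does not cross $W$ (since $M$ is the only midcube of $W$ in $C$). So within $C$ the two $\sim$-classes are exactly $V(C^+)$ and $V(C^-)$.

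Next I will globalise these local pictures. Whenever two cubes $C_1, C_2$ of $N(W)$ share a face $F$: if $F$ does not meet $W$ then $F$ lies entirely on one side of the midcube in each $C_i$, giving an identification $C_1^{\epsilon}\leftrightarrow C_2^{\epsilon'}$; if $F$ does meet $W$ then $F$ itself contains a midcube of $W$ and splits as $F^+\cup F^-$, and each $F^{\epsilon}$ lies in a specific half of each $C_i$, again yielding a canonical identification. These identifications assemble the disjoint union of half-cubes into $N(W)\setminus q(W)$ and present it as a $\mathbb{Z}/2$-bundle (equivalently, a $2$-sheeted cover) over $W$. Since $W$ is connected, such a cover has either one or two path-components, according to whether the bundle is non-trivial or trivial.

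Finally I will translate back to $\sim$. An edge path in $N(W)$ avoiding $W$ is a path in $N(W)\setminus q(W)$, so $\sim$-equivalent vertices lie in the same component. Conversely, any path between two vertices of $N(W)\setminus q(W)$ can be homotoped, using the cube structure inherited from the half-cubes, to an edge path that still avoids $W$; so vertices in the same component are $\sim$-equivalent. Hence the $\sim$-classes correspond bijectively to the path-components of $N(W)\setminus q(W)$ that contain a vertex, of which there are at most two. The main obstacle will be the gluing step: verifying that, whenever two cubes share a face meeting $W$, the two-side structures agree so that the local $\pm$ labels assemble into a consistent $\mathbb{Z}/2$-bundle over $W$. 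This compatibility is exactly where the assumption that $W$ is embedded is used.
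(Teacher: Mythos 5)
Your argument is correct in substance but takes a genuinely different route from the paper's. The paper fixes a single edge $e$ dual to $W$ with endpoints $x,y$, and shows directly that every vertex $z\in N(W)$ is $\sim$-equivalent to $x$ or $y$: one takes a dual edge $e'$ at $z$, joins $e'$ to $e$ by an edge path in the cube structure of $W$, and walks along the boundary of the corresponding ladder of squares; the key fact (each square contains at most one midcube of $W$, by embeddedness) guarantees the walk never crosses $W$. Your proof instead bounds $\pi_0\bigl(N(W)\setminus q(W)\bigr)$ globally by a double-cover argument and then matches components with $\sim$-classes; this is more structural and essentially constructs the two-sided interval-bundle picture of the carrier, at the cost of more bookkeeping. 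Two points need tightening. First, $N(W)\setminus q(W)$ is not literally a $2$-sheeted cover of $W$ (wrong dimension), and the natural ``project to the midcube'' map is not even well defined on a face of two cubes of $N(W)$ that misses $W$; what is true, and what your argument really uses, is that the set of pairs (cube of $N(W)$, side of its unique midcube) with adjacency induced by face relations \emph{meeting} $W$ is a combinatorial double cover of the face-adjacency graph of the cubes of $W$, which is connected, hence has at most two components --- and the extra identifications along faces missing $W$ only merge components further. Second, your converse step (``any path can be homotoped to an edge path avoiding $W$'') is asserted rather than proved; it is cleaner to observe that any two half-cubes with nonempty intersection share a vertex (whether or not the common face meets $W$), so all vertices in one path component of $N(W)\setminus q(W)$ lie in one $\sim$-class, while the forward inclusion is immediate. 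Also note that embeddedness enters already in the local step (exactly one midcube per cube), not only in the gluing step as you suggest; once that is in place the compatibility of the local $\pm$ structures is automatic.
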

\begin{proof}\renewcommand{\qedsymbol}{}%Suppresses QED square at end of proof
\end{proof}
	\begin{wrapfigure}{l}{.5\textwidth}
		\centering
		\includegraphics[width=0.4\textwidth, clip=true ]{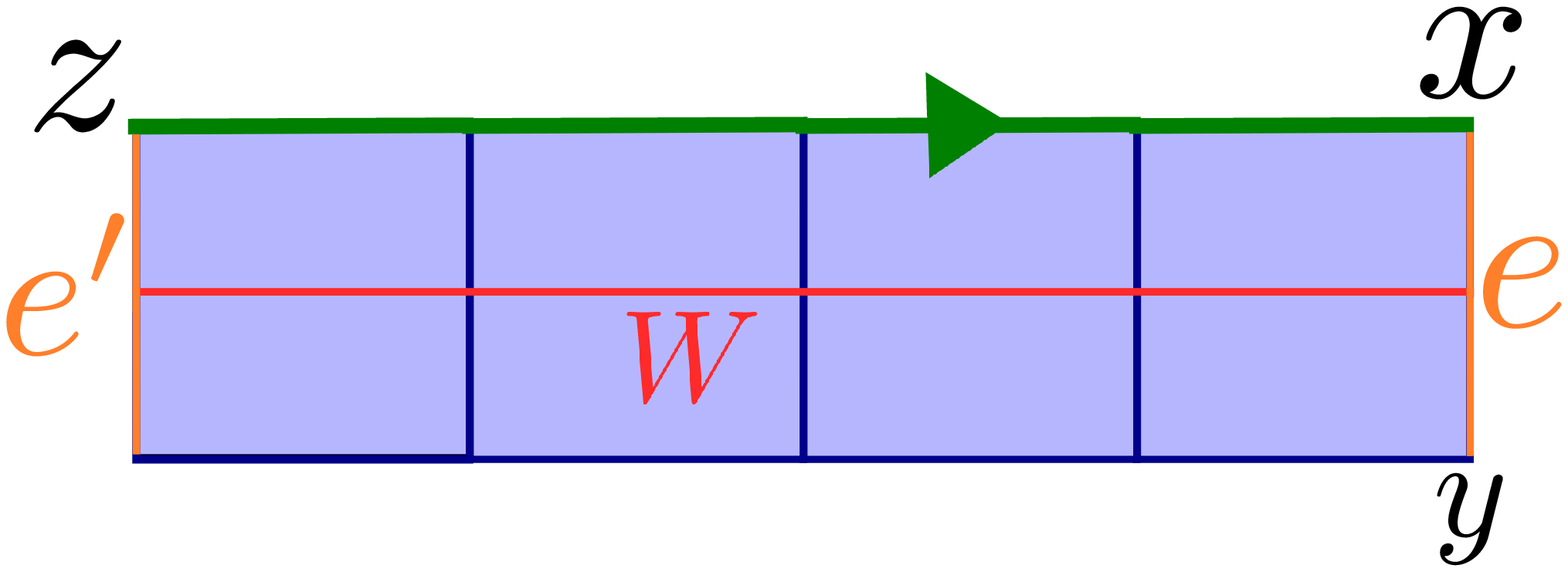}
	\end{wrapfigure}
Let $e$ be an edge with endpoints $x,y$ that is dual to $W$. Given a vertex $z\in N(W)$, we wish to show that it is equivalent to either $x$ or $y$. Take an edge $e'$ incident at $z$ that is dual to $W$, and an edge path in the cube structure of $W$ joining $e'$ with $e$. As shown, this edge path sits inside a sequence of squares containing $x,y,z$, and along the boundary of these squares there is an edge path in $X$ from $z$ to one of $x,y$ - and this path doesn't cross $W$ because each square of $X$ contains at most one midcube of $W$.\qed

\begin{prop}\label{twosides}
In a CAT(0) cube complex $X$, each wall is two-sided and separates $X$ into two connected components.
\end{prop}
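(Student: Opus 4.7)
My approach has two main parts: first establishing a parity invariant for wall crossings of edge paths, then using it to deduce both two-sidedness and the separation claim. Any edge loop in $X$ is null-homotopic (since $X$ is CAT(0), hence simply connected), and so by van Kampen's theorem it is a product of conjugates of boundaries of 2-cells. Each 2-cell of $X$ is a square, whose boundary either crosses $W$ exactly twice (on the opposite pair of edges dual to $W$, in the case that the square contains a midcube of $W$) or does not meet $W$ at all. Consequently, the parity mod 2 of the number of wall crossings of an edge path depends only on its endpoints. Fixing a basepoint vertex $x_0$ therefore gives a well-defined function $\epsilon \colon V(X) \to \{+,-\}$.

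Two-sidedness then follows from Proposition \ref{1or2sides}: the edge $e = [x,y]$ dual to $W$ satisfies $\epsilon(x) \neq \epsilon(y)$, so there can be no edge path from $x$ to $y$ avoiding $W$. Hence $W$ is two-sided, with $W^{\pm}$ corresponding to the two $\epsilon$-values.

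For the separation claim, I would extend $\epsilon$ to all of $X \setminus W$: on cubes not meeting $W$ it is constant, while on a cube $C$ meeting $W$ in a midcube $M$, the product decomposition $C = M \times [-1,1]$ gives a value on each of the two components of $C \setminus W$ matching the adjacent vertex values. The resulting function $\epsilon \colon X \setminus W \to \{+,-\}$ is continuous, so $X \setminus W$ has at least two components. To see there are exactly two, I would argue that each preimage $\epsilon^{-1}(\pm)$ is path-connected. The key observation is that $N(W) \cong W \times [-1, 1]$ — the product structures on individual cubes assemble consistently by convexity of $W$ — so $N(W) \cap \epsilon^{-1}(+) \cong W \times [-1, 0)$ is connected. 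Any point $p \in X \setminus W$ with $\epsilon(p) = +$ can then be connected into this set via the CAT(0) geodesic from $p$ to its nearest-point projection $\pi(p) \in W$, which enters $N(W) \setminus W$ on the $+$-side just before reaching $\pi(p)$, by continuity of $\epsilon$ along the geodesic.

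The step I expect to be most delicate is justifying the global product structure $N(W) \cong W \times [-1, 1]$: the decompositions $C = M \times [-1,1]$ on individual cubes must glue compatibly along their faces, which follows from $W$ being convex and embedded in $X$ (so each cube contains at most one midcube of $W$ and these midcubes fit together as a cube subcomplex isomorphic to $W$), but this gluing deserves careful verification. Everything else — the van Kampen parity argument, the extension of $\epsilon$, and the geodesic connectivity step — is then routine.
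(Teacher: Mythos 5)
Your proof is correct, and the first half — the parity invariant for wall crossings obtained from null-homotopy across squares, followed by the deduction of two-sidedness via Proposition \ref{1or2sides} — is exactly the paper's argument (the paper phrases the parity step as invariance under elementary homotopy moves rather than via van Kampen, but these are the same computation). Where you diverge is in finishing the separation claim. The paper simply exhibits the two pieces as open sets $X(x)=\{z\in X-W \mid d(z,[x])<d(z,[y])\}$ and $X(y)$, described explicitly as unions of cubes and open half-cubes; this makes disconnectedness immediate but leaves the connectivity of each piece implicit. You instead extend the sign function $\epsilon$ to a locally constant function on $X-W$ and then prove each preimage is path-connected using the carrier product structure $N(W)\cong W\times[-1,1]$ together with nearest-point projection. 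That works (the product structure of the carrier is a true and standard fact), but it is heavier machinery than the statement needs: each component of $C\setminus W$ for a single cube $C$ is convex and contains a vertex, so any point of $\epsilon^{-1}(+)$ connects inside one cube to a vertex of the $+$ class, and any two such vertices are joined by an edge path avoiding $W$, which stays in $X\setminus W$ since an edge meets no wall other than its dual one. This bypasses the gluing verification you flag as the delicate step, and is closer in spirit to how the paper organizes the argument around the vertex equivalence classes.
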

\begin{proof}\renewcommand{\qedsymbol}{}%Suppresses QED square at end of proof
\end{proof}
\begin{wrapfigure}{r}{.5\textwidth}
\centering
\includegraphics[width=0.4\textwidth, clip=true ]{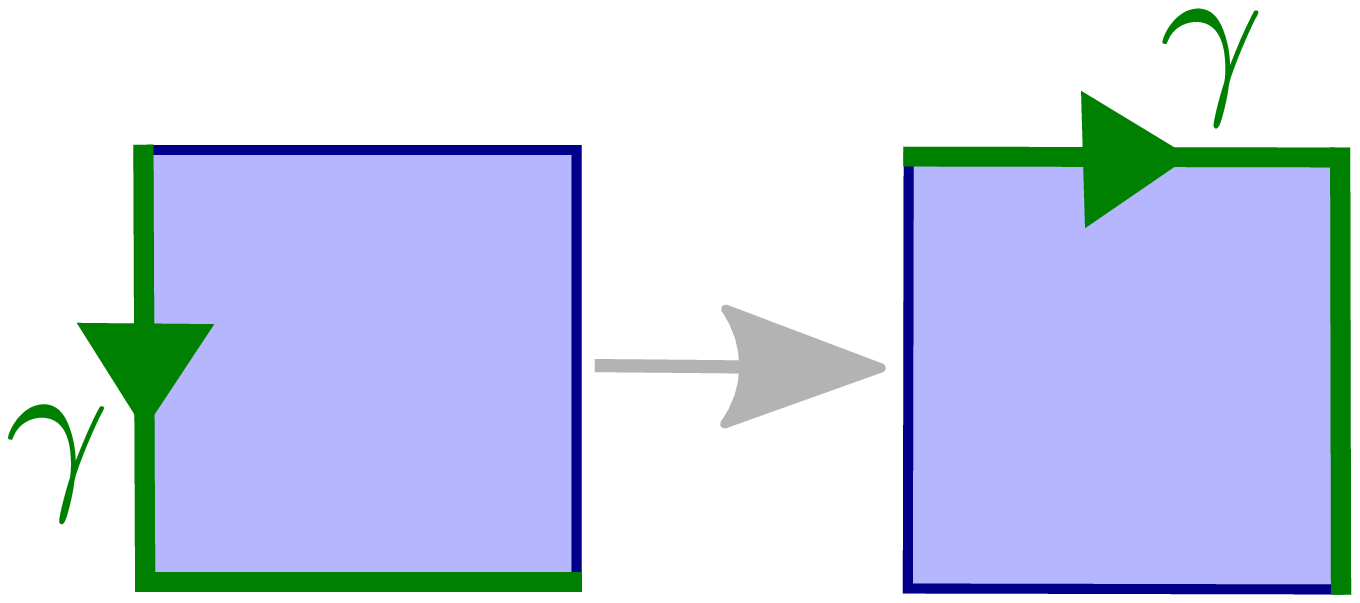}
\end{wrapfigure}

Let $W$ be a wall in $X$. Consider an edge loop $\gamma$ in $X$. $X$ is CAT(0), so in particular it is simply connected, thus $\gamma$ can be homotoped down to a constant loop by a sequence of moves that add/remove backtracks or push a subpath of $\gamma$ across a square in $X$ as shown (see \cite[I.8A.4]{nonpos}). The parity of the number of times $\gamma$ crosses $W$ is preserved by these moves, so $\gamma$ must originally have crossed $W$ an even number of times. 

 Consider an equivalence relation similar to that of Definition \ref{twoside}, but defined on all vertices of $X$: $x\sim y$ if $x$ can be joined to $y$ by an edge path that never crosses $W$. Write $[x]$ for the equivalence class of $x$. Let $e$ be an edge with endpoints $x,y$ that is dual to $W$. Note that $[x]\neq[y]$ because loops containing $e$ cross $W$ an even number of times. Clearly any vertex in $X$ is equivalent to one in $N(W)$, so by Proposition \ref{1or2sides}, $[x]$ and $[y]$ are the only classes.
 
Let $X(x):=\{z\in X- W\,|\,d(z,[x])<d(z,[y])\}$ - this is the union of all cubes containing only vertices in $[x]$ plus, for every cube with a midcube in $W$, the open half cube that has a vertex in $[x]$. Define $X(y)$ similarly. By construction $X(x)$ and $X(y)$ are open, disjoint and their union is $X- W$. \qed

The closures of $X(x)$ and $X(y)$ above are called \textit{half-spaces}. Each half-space is a convex subcomplex of $\dot{X}$ (where the dot denotes barycentric subdivision), because if a geodesic joining two points in the same half-space crosses into the other half-space this would create a geodesic between points in $W$ that leaves $W$, contradicting the convexity of $W\subset X$.

\begin{prop}\label{edgewall}
Let $e_1,e_2$ be edges incident at a vertex $x$ in a CAT(0) cube complex $X$. If $W(e_1)=W(e_2)$ then $e_1=e_2$.
\end{prop}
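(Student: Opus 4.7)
The plan is to argue by contradiction using the CAT(0) angle sum inequality. Suppose $e_1 \neq e_2$, and let $m_1, m_2$ denote their midpoints; both lie in $W$ at distance $1/2$ from $x$. Since $W$ is a closed convex subspace of $X$ (as noted just before the proposition), the geodesic $[m_1, m_2]$ stays inside $W$, and concatenating it with the half-edges $[x, m_1] \subset e_1$ and $[x, m_2] \subset e_2$ produces a geodesic triangle $\triangle$ with vertices $x, m_1, m_2$.

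The key local step is to show that the interior angle of $\triangle$ at $m_i$ equals $\pi/2$. Inside any cube $C$ containing $e_i$, choose coordinates so that $e_i$ lies along the first axis; then $W \cap C$ is the midcube $\{x_1 = 1/2\}$, perpendicular to $e_i$ inside $C$. This perpendicular splitting lifts to the tangent cone of $X$ at $m_i$, which decomposes as the $e_i$-line factored against the ambient wall directions. Hence the initial direction of $[m_i, m_j] \subset W$ is orthogonal to that of $[m_i, x] \subset e_i$, giving the claimed right angle.

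Applying the CAT(0) bound on angle sums then forces the angle $\alpha$ at $x$ to satisfy $\alpha + \pi/2 + \pi/2 \leq \pi$, so $\alpha = 0$. Two CAT(0) geodesics of equal length emanating from a common point with the same initial direction must coincide, giving $m_1 = m_2$ and therefore $e_1 = e_2$, the desired contradiction. The main obstacle here is the right-angle computation at $m_i$: it relies on a tangent-cone analysis at a non-vertex point of $X$, but it is ultimately forced by the fact that in each cube the dual edges cross the wall midcube perpendicularly.
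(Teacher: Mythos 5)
Your route is genuinely different from the paper's, and the geometric idea is sound, but the final inference as you state it rests on a false general principle, so the proof does not quite close without repair. For comparison, the paper's argument is much shorter: the open ball of radius $\tfrac12$ about the vertex $x$ lies in the cubes at $x$ and misses every wall, so $d(x,W)=\tfrac12$; both midpoints realize this distance; and the closest-point projection of a CAT(0) space onto a closed convex subspace is single-valued, so the midpoints coincide. Your right-angle claim is correct, and the clean way to justify it (avoiding tangent-cone language) is that the carrier $N(W)$ is convex (as noted in the proof of Proposition \ref{edgegeodesic}) and splits isometrically as $W\times[-\tfrac12,\tfrac12]$, with $[m_i,x]$ running along the interval factor and $[m_i,m_j]$ lying in $W\times\{0\}$ by convexity of $W$; in a metric product the Alexandrov angle between such geodesics is exactly $\pi/2$.

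The gap is the last step. "Two CAT(0) geodesics of equal length from a common point with Alexandrov angle $0$ between them must coincide" is false: angle zero does not force the geodesics even to share a segment. For instance, glue two Euclidean half-planes along the convex region above a parabola through the origin (Reshetnyak's gluing theorem keeps this CAT(0)); the two copies of a boundary ray issuing from the origin are distinct geodesics whose comparison angles at the origin tend to $0$, so their Alexandrov angle is $0$, yet their endpoints stay far apart. The repair is to run your argument through comparison angles rather than Alexandrov angles. If $m_1\neq m_2$, the Euclidean comparison triangle for $\triangle(x,m_1,m_2)$ is isoceles with legs of length $\tfrac12$ and positive base, so its base angles are strictly less than $\pi/2$; but the CAT(0) condition bounds the Alexandrov angle at $m_i$ above by the corresponding comparison angle, contradicting $\angle_{m_i}=\pi/2$. (Equivalently, your angle-sum bound really shows the comparison angle at $x$ is $\leq 0$, which degenerates the comparison triangle and, with two equal legs, forces $\bar m_1=\bar m_2$.) With that substitution your proof is complete, at the cost of being noticeably longer than the projection argument.
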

\begin{proof}
Let $W=W(e_1)=W(e_2)$. $W$ is a closed convex subspace of $X$, so there is a well-defined closest point projection $p:X\to W$. If $y_1$ is the midpoint of $e_1$, then $d(x,y_1)=1/2$, and no other point of $W$ can be closer to $x$ (the open ball of radius $1/2$ about $x$ is contained in the cubes incident at $x$ and doesn't touch any wall), so $p(x)=y_1$. But similarly if $y_2$ is the midpoint of $e_2$ then $p(x)=y_2$; hence $y_1=y_2$ and $e_1=e_2$.
\end{proof}
\begin{prop}\label{edgegeodesic}
Let $x,y$ be vertices in a CAT(0) cube complex $X$. An edge path between $x$ and $y$ will be of minimal length if and only if it only crosses walls that separate $x$ and $y$ and it crosses each of these once.
\end{prop}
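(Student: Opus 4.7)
The plan is to establish that $d_1(x,y)$, the minimum length of an edge path from $x$ to $y$, equals the number $N(x,y)$ of walls separating $x$ and $y$, with the minimum realised precisely by paths that cross each separating wall once and no others.

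First I would prove a parity statement: for any edge path $\gamma$ from $x$ to $y$ and any wall $W$, the number $n_W(\gamma)$ of times $\gamma$ crosses $W$ has parity $1$ if $W$ separates $x,y$ and parity $0$ otherwise. This goes exactly as in Proposition \ref{twosides}: simple connectivity of $X$ means any two edge paths with common endpoints differ by a sequence of backtrack insertions/deletions and square-pushes, both of which change $n_W$ by an even number, so the parity depends only on $(x,y)$; one then pins down the parities on explicit paths (a single edge dual to $W$ in the separating case, or an edge path inside one half-space in the other). Summing gives
\[
|\gamma|=\sum_W n_W(\gamma)\geq N(x,y),
\]
with equality iff each separating wall is crossed exactly once and no non-separating wall is crossed. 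This already proves the `if' direction of the proposition.

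For the `only if' direction it suffices to exhibit an edge path of length $N(x,y)$, which I would do by induction on $N(x,y)$. The key claim is: if $x\neq y$ then there is a wall $W$ separating $x,y$ with $x\in N(W)$. Given the claim, the edge $e$ at $x$ dual to $W$ reaches a vertex $x'$; crossing $e$ flips only the side of $W$, so $N(x',y)=N(x,y)-1$, and the inductive hypothesis supplies a path from $x'$ to $y$ of length $N(x,y)-1$, which concatenates with $e$ to give the desired path. The claim also covers the base case, since its contrapositive gives $N(x,y)=0\Rightarrow x=y$.

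To prove the claim, I would consider the unique CAT(0) geodesic $\sigma:[0,L]\to X$ from $x$ to $y$. Its initial direction $\sigma'(0)$ lies in some cube $C$ containing $x$, and in coordinates making $x$ the origin of $C=[0,1]^d$ all components of $\sigma'(0)$ are non-negative. The straight-line segment $\sigma$ inside $C$ then crosses a midcube of $C$ strictly before exiting $C$, so the first wall $W$ that $\sigma$ crosses is a wall of a cube containing $x$, giving $x\in N(W)$. Convexity of the closed half-space $W^+\cup W$ together with uniqueness of CAT(0) geodesics then prevents $\sigma$ from returning to $W^-$ after the crossing, so $y\in W^+$ and $W$ separates $x$ and $y$. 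The main obstacle is this last step, which crucially combines the CAT(0) uniqueness of geodesics with the convexity of walls and half-spaces established earlier in the section.
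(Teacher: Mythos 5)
Your proof is correct, but it takes a genuinely different route from the paper's. The paper argues directly on a minimal edge path: if it crossed some wall $W$ twice, one takes an innermost such subpath and either pushes the first crossing along squares inside $N(W)$ until a backtrack appears (via Proposition \ref{edgewall}), or, if the subpath leaves $N(W)$, uses a CAT(0) angle argument to show that the wall dual to the exiting edge is disjoint from $W$ and must be recrossed, a contradiction. You instead extend the parity argument of Proposition \ref{twosides} from loops to paths with fixed endpoints to get $|\gamma|=\sum_W n_W(\gamma)\geq N(x,y)$ together with the characterisation of equality, and then construct a path of length $N(x,y)$ by induction; the inductive step rests on the claim that some wall separating $x$ from $y$ is dual to an edge at $x$, which you extract from the first midcube crossed by the CAT(0) geodesic $[x,y]$ plus convexity of closed half-spaces and uniqueness of geodesics. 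Both arguments are sound; yours has the bonus of explicitly identifying the combinatorial distance with the number of separating walls and of producing an edge geodesic shadowing the CAT(0) geodesic, while the paper's is shorter and works with an arbitrary minimal path. The one step you should flesh out is the assertion that $\sigma$ crosses a midcube of $C$ strictly before exiting $C$: this tacitly uses that the restriction of $\sigma$ to its initial cube is a Euclidean straight segment (a consequence of the convexity of cubes and uniqueness of CAT(0) geodesics), so that, all direction components being non-negative, some coordinate must pass through $1/2$ before reaching $1$.
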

\begin{proof}
An edge path between $x$ and $y$ must cross every wall that separates $x$ and $y$, so any edge path from $x$ to $y$ that only crosses these walls, and crosses each of them once, is necessarily of minimal length.
 
 For the converse implication, let $\gamma$ be a shortest edge path from $x$ to $y$ and suppose for contradiction that it crosses some wall twice. Say $e_1,...,e_n$ are the edges of a subpath of $\gamma$ with $W(e_1),...,W(e_{n-1})$ distinct and $W=W(e_1)=W(e_n)$.
 
 Suppose first that $e_2,...,e_{n-1}$ all lie in $N(W)$. Then there must be a square containing $e_2$ that intersects $W$; and if $e'_1$ is the edge of this square that crosses $W$ and meets $e_2$ at the same vertex as $e_1$, then Proposition \ref{edgewall} implies that $e_1=e'_1$. But then we can replace $e_1,e_2$ with edges $f_1, f_2$ going the other way round the square. The result is that we have pushed the first crossing of $W$ further along the subpath, whilst preserving the length of $\gamma$. Repeating this we can replace the subpath with $f_1,...,f_{n-1},e_n$ where $W=W(f_{n-1})$. But then,  by Proposition \ref{edgewall}, $f_{n-1},e_n$ must be a backtrack, contradicting the minimality of $\gamma$.
 
 Suppose now that $e_i$ is the first edge in the subpath that leaves $N(W)$. Note that $N(W)$ is the $\frac{1}{2}$-neighbourhood of $W$, so it is convex. The first half of $e_i$ is a geodesic $\eta$ between $N(W)$ and $W(e_i)$; if $N(W)$ and $W(e_i)$ intersect then we can form a geodesic triangle between a point of the intersection and $\eta$, but the angles at either end of $\eta$ are at least $\pi/2$, contradicting $X$ being CAT(0). Thus $N(W)\cap W(e_i)=\emptyset$ and $W(e_i)$ is disjoint from $W$. But then our subpath must recross $W(e_i)$ before it can get back to $W$, which is a final contradiction.
 \end{proof}
 
\begin{prop}\label{intersectwalls}(Helly's Theorem for walls of CAT(0) cube complexes)\cite[Lemma 13.13]{HW}\\
If $W_1,...,W_n$ are pairwise intersecting walls in a CAT(0) cube complex $X$, then there is a cube $C$ with $C\cap W_1\cap...\cap W_n\neq\emptyset$.
\end{prop}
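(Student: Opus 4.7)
My plan is to induct on $n$. The case $n=1$ is trivial (pick any cube with an edge dual to $W_1$). For $n=2$, a point $p\in W_1\cap W_2$ lies in the interior of some cube $C$; since each cube contains at most one midcube of each wall, $W_1\cap C$ and $W_2\cap C$ are two distinct midcubes of $C$ meeting at $p$, so $p\in C\cap W_1\cap W_2$.

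For the inductive step I would first reformulate the goal combinatorially: finding a cube $C$ with $C\cap W_1\cap\dots\cap W_n\neq\emptyset$ is equivalent to finding a vertex $v\in X$ and incident edges $e_1,\dots,e_n$ with $e_i$ dual to $W_i$, together spanning an $n$-cube. The forward direction uses that the intersection point lies in the interior of some face $F$, where the $n$ walls contribute $n$ distinct midcubes through that point (in $n$ different coordinate directions of $F$), so $F$ contains an $n$-dimensional sub-face whose centre lies in the intersection; the reverse direction uses that the $n$ midcubes of any $n$-cube all meet at its centre. Since $X$ is NPC, the link of $v$ is flag, so $e_1,\dots,e_n$ span an $n$-cube if and only if every pair $e_i,e_j$ spans a square.

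The key intermediate lemma is: \emph{if edges $e_i, e_j$ incident at $v$ are dual to walls $W_i, W_j$ with $W_i\cap W_j\neq\emptyset$, then $e_i$ and $e_j$ span a square.} I would prove this via the closest-point projection argument underlying Proposition~\ref{edgewall}: the midpoints $y_i, y_j$ of $e_i, e_j$ are the unique points of $W_i, W_j$ closest to $v$, and examining a geodesic from $v$ into the nonempty closed convex set $W_i\cap W_j$ (using the CAT(0) inequality to control the angle at $v$) forces the link directions of $e_i$ and $e_j$ to be joined by an edge in $\mathrm{lk}(v)$, equivalently forces $e_i, e_j$ to bound a square.

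Given this lemma, the inductive step finishes as follows. By induction, there is a vertex $v$ with incident edges $e_1,\dots,e_{n-1}$ dual to $W_1,\dots,W_{n-1}$ spanning an $(n-1)$-cube $F$. If some vertex of $F$ is incident to an edge $e_n$ dual to $W_n$, the lemma shows each pair $(e_i,e_n)$ spans a square, whence the flag condition promotes the $n$ edges to an $n$-cube. The main obstacle is arranging some vertex of a suitable $F$ to lie in $N(W_n)$. I would choose, among all $(n-1)$-cubes whose edges are dual to $W_1,\dots,W_{n-1}$, one whose vertex set has minimum combinatorial distance to $N(W_n)$. If this distance is positive, take a shortest edge path from a vertex of $F$ to $N(W_n)$: by Proposition~\ref{edgegeodesic} every wall it crosses separates its endpoints and is therefore disjoint from $W_n$, so combining this with the pairwise intersections $W_n\cap W_i\neq\emptyset$ allows a square-sliding move that produces a strictly closer $(n-1)$-cube with edges dual to $W_1,\dots,W_{n-1}$, contradicting minimality.
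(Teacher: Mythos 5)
The paper does not actually prove this proposition --- it is quoted directly from Haglund--Wise \cite[Lemma 13.13]{HW} --- so there is no in-paper argument to measure you against. Your proof is the standard one for this fact: reduce to producing a single vertex incident to edges dual to all $n$ walls, use a no-inter-osculation lemma to get pairwise squares among those edges, and let the flag condition promote the squares to an $n$-cube. That architecture is correct, the reformulation and the base cases are fine, and the use of flagness at the end is exactly right.

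Two steps, however, carry much more weight than your sketch acknowledges. First, your key lemma (edges at a common vertex dual to crossing walls span a square there) is precisely the statement that hyperplanes in a CAT(0) cube complex do not inter-osculate; this fails for general NPC cube complexes, so any proof must use simple connectivity and not just the link condition. Your angle argument can be made to work, but the content lives in two things you elide: you need the \emph{strict} inequality $\angle_v\bigl([v,y_i],[v,p]\bigr)<\pi/2$ for $p$ the closest point of $W_i\cap W_j$ to $v$ (the projection argument gives only $\le\pi/2$; strictness requires the flat-triangle rigidity of CAT(0) spaces), and you need that in the all-right spherical structure on $\mathrm{lk}(v)$ two non-adjacent vertices subtend an Alexandrov angle $\ge\pi$, which is where flagness enters via \cite[II.5]{nonpos}. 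Second, in the inductive step the claim that every wall $U$ crossed by your shortest path ``separates its endpoints and is therefore disjoint from $W_n$'' does not follow from Proposition~\ref{edgegeodesic}: a wall can separate the two endpoints of a geodesic and still cross $W_n$. What you actually need is the gate property of the convex subcomplex $N(W_n)$ --- a wall separating $x$ from its closest vertex in $N(W_n)$ separates $x$ from all of $N(W_n)$ --- which is true but is a genuine additional lemma. Once $U\cap W_n=\emptyset$ is established you should also record why $U$ crosses each of $W_1,\dots,W_{n-1}$ (each $W_i$ meets $F$ on the near side of $U$ and meets $W_n$ on the far side, and is connected), since that is the hypothesis of your key lemma that licenses the square-sliding move. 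With these repairs the argument is complete; the alternative route in the literature replaces your sliding argument by the Helly property for convex subcomplexes of the median $1$-skeleton, which is shorter once the median machinery is available.
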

\newpage

\section{Special cube complexes}\label{sec:special}

\begin{defn}
	A cube complex is \textit{simple} if the link of each vertex is a simplicial complex. In particular, a pair of edges meeting at a vertex $x$ cannot form two different corners of squares at $x$, as this would be a double edge in the link of $x$. 
\end{defn}
\begin{defn}\label{special}($C$-Special cube complex)\\
A simple cube complex $X$ is \textit{$C$-special} if:
\begin{enumerate}[(1)]
\item $W(e_1)\neq W(e_2)$ for any two distinct edges $e_1,e_2$ incident at a vertex $x$.
\item Given distinct edges $e_1,e_2$ incident at a vertex $x$ with $W(e_1)\cap W(e_2)\neq\emptyset$, we have that $e_1,e_2$ form the corner of a square in $X$.
\item $X^{(1)}$ is a bipartite graph.
\end{enumerate}
We use the definition of $C$-special from \cite[Definition 3.2]{HW} rather than the definition of special simply because it is easier to state (it avoids notions of oriented edges and oriented walls). The two definitions are closely related: any $C$-special cube complex is also special, and any compact special cube complex has a finite-sheeted $C$-special cover \cite[Proposition 3.10]{HW}. The results in this section hold for either definition.
\end{defn}

\begin{remk}
	A $C$-special (or special) cube complex might not be NPC, but it can be made NPC in a unique way by attaching cubes of dimension at least three \cite[Lemma 3.13]{HW}.
\end{remk}

\begin{remk}
We can also think of the definition of $C$-special cube complex as ruling out certain behaviours of hyperplanes. Property (1) of the definition rules out self-intersections and self-osculations of hyperplanes as illustrated below (hyperplanes in red, edges of the cube complex in blue). Property (2) of the definition forbids inter-osculations of hyperplanes. Property (3) ensures that the fundamental group embeds in a right-angled Coxeter group (hence the $C$ in $C$-special), this property is less important and indeed is absent from the definition of special.
\end{remk}

\includegraphics[width=0.55\textwidth, clip=true ]{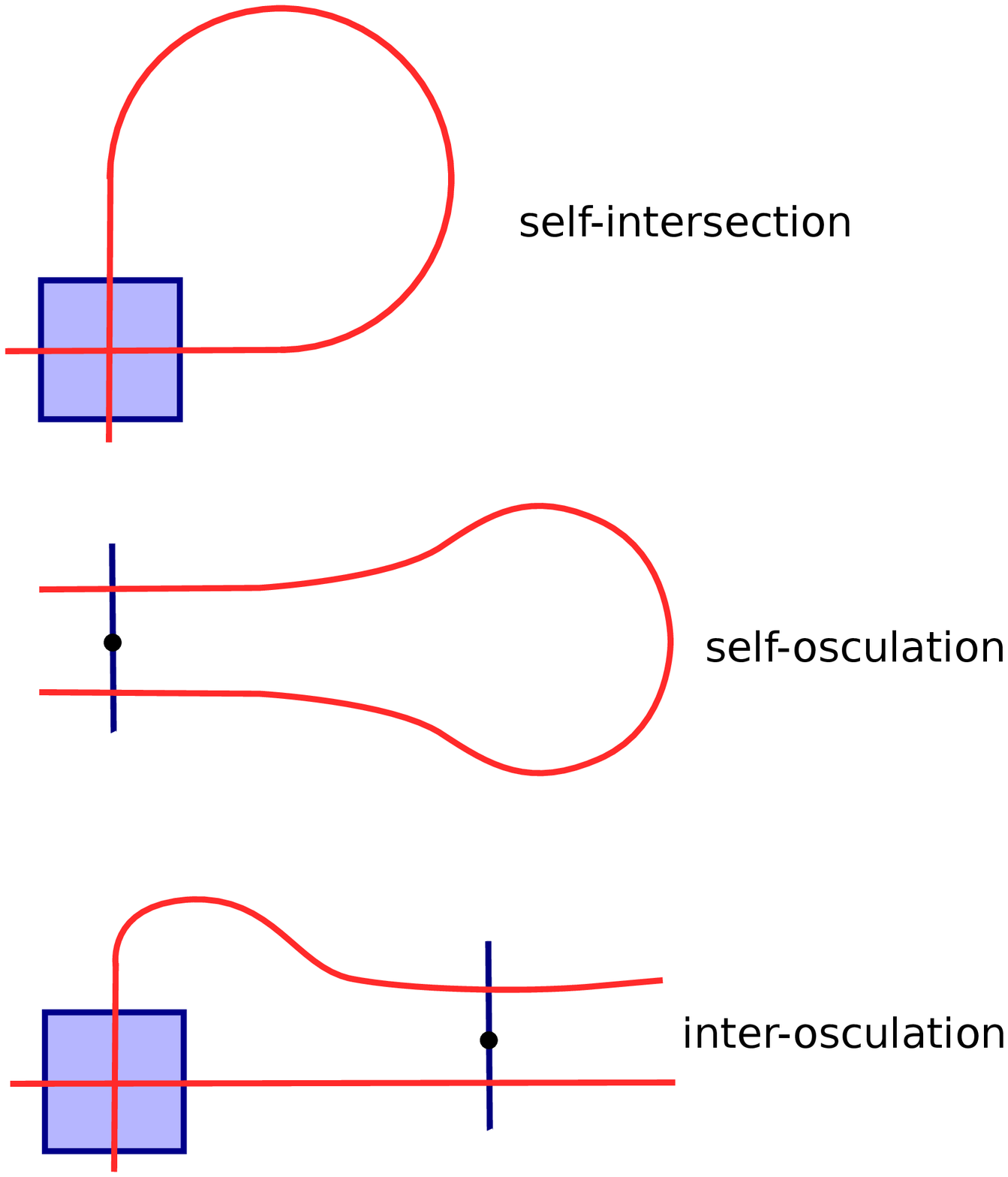}

Being $C$-special grants a cube complex some of the nice geometry of CAT(0) cube complexes, but without the strong restriction of being simply connected. We now state an assortment of results about $C$-special cube complexes that we will need later. Let's start with two easy propositions.

\begin{prop}\label{coverspecial}
Any covering of a $C$-special cube complex is $C$-special.
\end{prop}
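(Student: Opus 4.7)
The plan is to verify the three defining properties of $C$-specialness, together with simplicity, for a covering $p \colon \tilde X \to X$ of a $C$-special cube complex $X$. The key technical fact I will use throughout is that a covering of cube complexes is a local isometry, so $p$ restricts to a homeomorphism (indeed, an isomorphism of cube structures) on the closed star of any vertex. Consequently, two distinct edges or squares incident at a vertex $\tilde x \in \tilde X$ remain distinct after projecting to $X$; and conversely any cube of $X$ with a chosen corner lifts uniquely to a cube of $\tilde X$ with a prescribed corner vertex.

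I would first set up the relationship between walls upstairs and downstairs. The wall complex is functorial in local isometries of cube complexes: $p$ induces a map $\tilde{\mathcal W} \to \mathcal W$ sending midcubes to midcubes, and this map is itself a covering. From this I extract the two facts I need: (a) if $\tilde e_1, \tilde e_2 \in E(\tilde X)$ satisfy $W(\tilde e_1) = W(\tilde e_2)$, then any midcube path in this common wall projects to a midcube path in $X$ lying inside one wall, so $W(p(\tilde e_1)) = W(p(\tilde e_2))$; (b) if $W(\tilde e_1)$ and $W(\tilde e_2)$ meet in $\tilde X$, then their images meet in $X$.

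Now I verify the axioms. Simplicity: two edges at $\tilde x$ forming two different squares would project to two edges at $p(\tilde x)$ forming two different squares in $X$ (using injectivity of $p$ on the star), contradicting simplicity of $X$. Property (1): if distinct $\tilde e_1, \tilde e_2$ at $\tilde x$ had $W(\tilde e_1) = W(\tilde e_2)$, then by fact (a) the distinct edges $p(\tilde e_1), p(\tilde e_2)$ at $p(\tilde x)$ would be dual to a common wall, contradicting property (1) for $X$. Property (2): given distinct $\tilde e_1, \tilde e_2$ at $\tilde x$ with $W(\tilde e_1) \cap W(\tilde e_2) \neq \emptyset$, fact (b) gives $W(p(\tilde e_1)) \cap W(p(\tilde e_2)) \neq \emptyset$, so by property (2) for $X$ there is a square of $X$ whose corner at $p(\tilde x)$ is $p(\tilde e_1) \cup p(\tilde e_2)$; lift this square to $\tilde X$ starting from $\tilde x$ to obtain a square whose corner edges must be $\tilde e_1, \tilde e_2$ by the injectivity of $p$ on the star. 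Property (3): a bipartition of $X^{(1)}$ pulls back via $p$ to a bipartition of $\tilde X^{(1)}$.

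No step here presents a genuine obstacle; the only thing requiring a little care is the wall-projection statements (a) and (b), which I would justify by noting that the cube-complex covering $p$ lifts to a covering of wall complexes, so walls are characterised by their path-components of midcubes in a way that is compatible with $p$.
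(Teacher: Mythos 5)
Your proposal is correct and follows essentially the same route as the paper: simplicity is checked locally, properties (1) and (2) are verified by pushing walls forward along the covering (using that walls map to walls and that the covering is injective on vertex stars), and the bipartition for (3) is pulled back. If anything, your version is slightly more careful than the paper's, since you make explicit the square-lifting step needed to conclude property (2) upstairs.
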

\begin{proof}
Suppose $\phi:Y\to X$ is a covering of cube complexes with $X$ $C$-special. Being simple is a local condition about links of vertices, so the simplicity of $Y$ follows from the simplicity of $X$. We need to check that $Y$ satisfies properties (1)-(3) of Definition \ref{special}. $\phi(W(e))=W(f(e))$, so $W(e_1)=W(e_2)$ implies $W(f(e_1))=f(W(e_1))=f(W(e_2))=W(f(e_2))$, therefore $Y$ satisfies (1). Similarly $W(e_1)\cap W(e_2)\neq\emptyset$ implies $W(f(e_1))\cap W(f(e_2))\neq\emptyset$, so (2) holds. Finally (3) is true by taking the two vertex classes in $Y$ to be preimages of the vertex classes in $X$.
\end{proof}

\begin{prop}\label{freeprodspecial}
If $X_1,...,X_n$ are $C$-special cube complexes and $x_i\in X_i$ are choices of base vertex, then the cube complex obtained from $\sqcup_i X_i$ by identifying $x_1,...,x_n$ is also $C$-special.
\end{prop}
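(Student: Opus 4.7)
The plan is a straightforward case analysis based on the key observation that the identification happens at a single vertex $x$ (the common image of the $x_i$) which lies in no wall. Consequently, every cube of $Y$ lies inside a unique $X_i$, the edges incident at any vertex $v \neq x$ all lie in the same $X_i$, and the walls of $Y$ are in natural bijection with the disjoint union of walls of the $X_i$. In particular, for any edge $e$ of $Y$, the wall $W(e)$ lies entirely inside the unique $X_i$ containing $e$, and walls coming from different $X_i$ are disjoint.

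First I would check simplicity. At any vertex $v \neq x$, the link of $v$ in $Y$ agrees with its link in the $X_i$ containing it, so is simplicial. At $v = x$, the link in $Y$ is the disjoint union of the links of the $x_i$ in $X_i$, and a disjoint union of simplicial complexes is simplicial. Next I would verify property (1) of Definition \ref{special}: for distinct edges $e_1, e_2$ at $v$, if $v \neq x$ or if $e_1, e_2$ both lie in the same $X_i$, the property is inherited from $X_i$; if $v = x$ and $e_1, e_2$ lie in different pieces $X_i$, $X_j$, then $W(e_1)$ and $W(e_2)$ live in disjoint subcomplexes, so are distinct walls. Property (2) is essentially the same analysis: if $v = x$ and $e_1, e_2$ lie in different pieces, then $W(e_1) \cap W(e_2) = \emptyset$, so this case is vacuous, and otherwise (2) for $Y$ follows from (2) for the relevant $X_i$.

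The only step requiring mild care is (3), bipartiteness of $Y^{(1)}$. For each $i$, fix a $2$-coloring of $X_i^{(1)}$ and, component by component, swap colors if necessary so that whenever $x_i$ lies in a component its color is black (a $2$-coloring admits such a swap in each connected component). These coloring choices are consistent under the identification $x_1 = \cdots = x_n = x$ since all the $x_i$ are now black, and every edge of $Y$ lies in a single $X_i$, so the resulting combined $2$-coloring of $Y^{(1)}$ shows $Y^{(1)}$ is bipartite.

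I do not expect any real obstacle: the identification at a single vertex is so mild that all the $C$-special conditions pass through essentially for free, with the bipartiteness step being the only one requiring a tiny adjustment.
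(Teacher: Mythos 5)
Your proof is correct and rests on exactly the same observation as the paper's (one-line) proof: each wall of the wedge is contained in a single $X_i$ factor, so all the $C$-special conditions are inherited. You simply carry out the routine verifications, including the colour-swapping adjustment for bipartiteness, which the paper leaves implicit.
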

\begin{proof}
This follows immediately from the definition of $C$-special cube complex because each wall will be contained in a single $X_i$ factor.
\end{proof}

A compact cube complex is \textit{virtually special} if it has a finite-sheeted cover that is special (or equivalently a finite-sheeted cover that is $C$-special). A group is \textit{virtually special} if it has a finite index subgroup that is the fundamental group of a compact special cube complex. We will now state some powerful theorems showing that hyperbolic groups enjoy some strong properties if they are fundamental groups of virtually special cube complexes. The first of these theorems, due to Wise, characterises the fundamental groups of compact NPC virtually special cube complexes using the following quasi-convex hierarchy of hyperbolic groups.

\begin{defn}\label{QVH}
Let $\mathcal{QVH}$ denote the smallest class of hyperbolic groups that is closed under the following operations:
\begin{enumerate}[1.]
\item $1\in\mathcal{QVH}$.
\item If $G=A*_B C$ and $A,C\in\mathcal{QVH}$ and $B$ is finitely generated and quasi-convex in $G$, then $G\in\mathcal{QVH}$.
\item If $G=A*_B $ and $A\in\mathcal{QVH}$ and $B$ is finitely generated and quasi-convex in $G$, then $G\in\mathcal{QVH}$.
\item Let $H<G$ with $|G:H|<\infty$. If $H\in\mathcal{QVH}$ then $G\in\mathcal{QVH}$.
\end{enumerate}
\end{defn}

\begin{thm}(Wise, 2011)\cite[Theorem 13.3]{qconvexh}\label{qvhspecial}\\
A torsion-free hyperbolic group is in $\mathcal{QVH}$ if and only if it is the fundamental group of a compact virtually special cube complex.
\end{thm}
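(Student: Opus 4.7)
The plan is to prove the two implications separately, each by induction, using quite different machinery. The reverse direction, that a torsion-free hyperbolic group of the form $\pi_1 X$ with $X$ compact virtually special is in $\mathcal{QVH}$, proceeds by induction on the number of hyperplanes in $X$. For the base case, if $X$ has no hyperplanes it is zero-dimensional and $\pi_1 X$ is trivial, which is in $\mathcal{QVH}$. For the inductive step, pass to a finite special cover using operation 4 of Definition \ref{QVH}, so that $X$ itself is special. Pick any hyperplane $W$. By specialness $W$ is embedded and two-sided, with carrier $N(W) \cong W\times[-1,1]$, so cutting $X$ open along $W$ produces a compact special cube complex $X'$ with strictly fewer hyperplanes and a graph-of-spaces structure on $X$ which realises $G$ as $A\ast_B C$ (if $W$ separates) or as $A\ast_B$ (otherwise), where $B=\pi_1 W$ is a hyperplane stabiliser. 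Hyperplane stabilisers in cocompactly cubulated hyperbolic groups are quasi-convex, since the lift of $W$ is a combinatorially convex subcomplex of $\widetilde{X}$ (see Proposition \ref{edgegeodesic}) and $G_W$ acts cocompactly on it (Remark \ref{cocompactstabiliser}). The vertex pieces of $X'$ are compact and special and have strictly fewer hyperplanes, so induction places $A$ and $C$ in $\mathcal{QVH}$, hence $G\in\mathcal{QVH}$ by operation 2 or 3.

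The forward direction, that every torsion-free hyperbolic group in $\mathcal{QVH}$ is the fundamental group of a compact virtually special cube complex, proceeds by induction on the length of a hierarchy witnessing membership in $\mathcal{QVH}$. The base case and the finite-index step are immediate (a finite index subgroup that is the fundamental group of a compact special complex for $H$ serves equally well for $G\supseteq H$). The substantive work is the amalgam and HNN steps: given $G=A\ast_B C$ (or $G=A\ast_B$) with $A,C$ virtually special and $B$ finitely generated quasi-convex, we must manufacture a compact virtually special cube complex whose fundamental group is $G$. The approach is to form a graph of spaces from compact virtually special representatives of $A$ and $C$, glued along a compact cube complex representing $B$ that is cubulated using $B$'s action on a wallspace inherited from the ambient cubulations of $A$ and $C$; one then invokes Wise's combination theorem for special cube complexes, which upgrades such a graph of special pieces to a virtually special whole \emph{provided} that the edge groups sit malnormally and quasi-convexly in the vertex groups.

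The main obstacle is that $B$ is not in general malnormal in $A$ or $C$, so the combination theorem cannot be applied directly. This is precisely where Wise's Malnormal Special Quotient Theorem enters: it provides compatible finite-index normal subgroups of each maximal non-malnormal almost-malnormal hull of $B$, such that the corresponding Dehn filling of $G$ is still hyperbolic, still virtually special (by the combination theorem, which now does apply), and now has malnormal edge groups. A careful retraction argument using separability of $B$ (which is available because the pieces are already virtually special) and a judicious choice of filling kernels then pulls virtual specialness back from the filled group to $G$ itself. I expect essentially all of the difficulty of the theorem to be concentrated here, in the proof of the Malnormal Special Quotient Theorem and in the verification that the combination theorem's hypotheses can be arranged; the reverse direction, by contrast, is a direct Bass-Serre decomposition along hyperplanes and requires only the structural facts about walls in special cube complexes established in Section \ref{sec:background}.
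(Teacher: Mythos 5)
First, a point of order: the paper does not prove this statement. It is imported verbatim as Theorem 13.3 of Wise's monograph \cite{qconvexh} and used as a black box throughout (just as the Malnormal Special Quotient Theorem, Theorem \ref{thm:MSQT}, is), so there is no internal proof to compare yours against. Judged on its own terms, your outline does reproduce the recognised architecture of Wise's argument -- cutting along hyperplanes to produce a hierarchy in one direction, and the special combination theorem plus the MSQT plus Dehn filling in the other -- but as written it is a proof schema rather than a proof, and it contains at least two concrete gaps.

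In the reverse direction, your induction on the number of hyperplanes is not well-founded as stated. If $U$ is a hyperplane of $X$ crossing $W$, then cutting $X$ open along $W$ removes the open carrier $W\times(-1,1)$, which severs $U$ along $(U\cap W)\times(-1,1)$; the remainder of $U$ can be disconnected, and several of its components can land in the \emph{same} vertex piece, each becoming a separate hyperplane there. (Already for a cubulated hyperbolic surface, cutting along one curve turns each transverse curve into many arcs.) So the vertex pieces need not have strictly fewer hyperplanes than $X$, and you need either a different induction parameter or an argument that controls how hyperplanes fragment -- this is exactly the point Wise's Lemma 13.2 has to handle. In the forward direction, the finite-index step is not ``immediate'': the conclusion of the theorem is that $G$ \emph{itself} is the fundamental group of a compact virtually special cube complex, not merely that it has a finite-index subgroup with that property. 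Knowing $H=\pi_1 Y$ for $H\triangleleft G$ of finite index gives no action of $G$ on $\widetilde{Y}$; one must re-cubulate $G$, e.g.\ via Sageev's construction applied to the (quasi-convex, codimension-one) hyperplane stabilisers of $H$ viewed as subgroups of $G$, and then transfer virtual specialness using Corollary \ref{bothspecial}. Beyond these two points, your amalgam/HNN step correctly names the ingredients (the combination theorem for special cube complexes, the MSQT, separability of $B$), but those ingredients are themselves theorems of depth comparable to the statement being proved, so invoking them leaves essentially all of the content of the forward direction unargued.
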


In particular, this implies that a torsion-free hyperbolic group is virtually special if and only if it is the fundamental group of a compact virtually special cube complex.
Theorem \ref{qvhspecial} is even more powerful when used in combination with the following theorem, which appeared in Haglund and Wise's original paper on special cube complexes. This theorem concerns the important notion of subgroup separability, which is defined as follows.

\begin{defn}(Separable subgroup)\\
	Let $G$ be a group. A subgroup $H$ of $G$ is \textit{separable} (in $G$) if for every $g\in G-H$ there is a homomorphism $\phi:G\to F$ such that $F$ is finite and $\phi(g)\notin\phi(H)$.
\end{defn}

\begin{thm}(Haglund-Wise, 2008)\cite[Theorems 1.3 and 1.4]{HW}\label{subgroupsep}\\
Let $X$ be a compact NPC cube complex with $\pi_1(X)$ hyperbolic. Then $X$ is virtually special if and only if every quasi-convex subgroup of $\pi_1(X)$ is separable.
\end{thm}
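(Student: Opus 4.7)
The plan is to address the two directions separately, as they use quite different tools.

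For the $(\Leftarrow)$ direction, assume that every quasi-convex subgroup of $G := \pi_1(X)$ is separable, and let $\tilde X$ be the universal cover. The goal is to produce a finite cover of $X$ that is $C$-special in the sense of Definition \ref{special}. The key starting observation is that, since $G$ is hyperbolic and acts properly and cocompactly on $\tilde{X}$, the stabiliser $G_W$ of any wall $W$ is quasi-convex in $G$ (the wall $W$ is convex in $\tilde{X}$ and $G_W$ acts cocompactly on it by Remark \ref{cocompactstabiliser}), and intersections of such stabilisers are again quasi-convex by a standard result in hyperbolic groups. Each of the bad wall behaviours -- self-intersection, self-osculation, and inter-osculation -- translates into a concrete algebraic configuration in $G$ involving a finite list of elements that witness the pathology between orbit representatives of pairs of walls; cocompactness ensures that only finitely many $G$-orbits of such pairs arise. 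For each such configuration I would use separability of an appropriate quasi-convex subgroup (together with separability of a relevant double coset $G_{W_1} g G_{W_2}$, which follows from separability of quasi-convex subgroups in this hyperbolic setting) to find a finite index subgroup of $G$ whose corresponding cover eliminates that particular witness. Intersecting these finitely many finite index subgroups produces a single $G' \leq G$ of finite index for which the cover $X/G'$ has no remaining pathology and is therefore $C$-special.

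For the $(\Rightarrow)$ direction, after passing to a finite cover we may assume that $X$ is itself special. Given quasi-convex $H \leq G$ and some $g \in G \setminus H$, I want to construct a finite quotient of $G$ that separates $g$ from $H$ in the sense of the separability definition in the excerpt. First I would realise $H$ geometrically: the combinatorial convex hull of an $H$-orbit in $\tilde{X}$ gives an $H$-cocompact convex subcomplex of $\tilde X$, which yields a local isometry $Y \to X$ from a compact NPC cube complex $Y$ with $\pi_1(Y) = H$. The workhorse is then Haglund and Wise's canonical completion and retraction: because $X$ is special and $Y \to X$ is a local isometry of compact cube complexes, there exists a finite cover $\hat{X} \to X$ in which $Y$ embeds, together with a cubical retraction $\hat{X} \to Y$ that induces a group-theoretic retraction $\pi_1(\hat{X}) \twoheadrightarrow H$. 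This exhibits $H$ as a virtual retract of $G$, from which separability follows by a classical argument: choose a finite quotient of $H$ that detects the image of $g$ under the retraction, pull back through $\pi_1(\hat{X}) \leq G$, and then extend to a finite quotient of $G$ by intersecting the kernel with its finitely many $G$-conjugates.

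The main obstacle is clearly the $(\Leftarrow)$ direction; the $(\Rightarrow)$ direction essentially reduces to citing the canonical completion machinery together with the standard fact that virtual retracts of residually finite groups are separable. In $(\Leftarrow)$, the subtle point is to make precise the algebraic witness for each wall pathology and to identify the correct combination of quasi-convex subgroups whose separability suffices -- inter-osculation in particular forces one to work with double cosets of two wall stabilisers, which is where the interplay between separability and quasi-convexity of intersections becomes essential. One must also check that resolving one pathology in a finite cover does not reintroduce another, which is handled by treating all $G$-orbits of pathological configurations simultaneously and taking a common finite index refinement before passing to the quotient.
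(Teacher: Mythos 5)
This statement is not proved in the paper at all: it is imported verbatim as a black box from Haglund--Wise \cite{HW} (their Theorems 1.3 and 1.4), so there is no internal proof to compare against. Your outline is nonetheless a correct reconstruction of the Haglund--Wise argument itself: the $(\Rightarrow)$ direction via cocompact convex cores for quasi-convex subgroups, canonical completion and retraction, and the virtual-retract-implies-separable argument is exactly their proof of Theorem 1.4, and the $(\Leftarrow)$ direction via quasi-convexity of wall stabilisers and the use of subgroup and double-coset separability to kill the finitely many orbits of self-intersections, self-osculations and inter-osculations in a finite cover is their proof of Theorem 1.3. The only points you leave as sketches that carry real content in \cite{HW} are (i) the fact that in a hyperbolic group separability of quasi-convex subgroups yields separability of the double cosets $G_{W_1} g G_{W_2}$, which is a genuine theorem rather than a formality, and (ii) the bookkeeping ensuring a single regular finite cover resolves all pathologies at once; both are handled in the cited source, so as a blind reconstruction of a quoted result your proposal is sound.
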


\begin{cor}\label{bothspecial}
Let $X$ and $Y$ be compact NPC cube complexes with $\pi_1(X)\cong\pi_1(Y)$ hyperbolic. Then $X$ is virtually special if and only if $Y$ is virtually special.
\end{cor}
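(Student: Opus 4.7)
The plan is to reduce the claim to a purely group-theoretic equivalence by invoking Theorem \ref{subgroupsep} on both sides. By that theorem, $X$ is virtually special if and only if every quasi-convex subgroup of $\pi_1(X)$ is separable, and the analogous statement holds for $Y$. Separability is defined in terms of the abstract group alone, so any isomorphism $\phi\colon\pi_1(X)\xrightarrow{\sim}\pi_1(Y)$ automatically carries separable subgroups to separable subgroups. Thus the whole content of the corollary is the claim that $\phi$ also preserves quasi-convexity of subgroups.

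To establish this, I would invoke the well-known fact that, for a hyperbolic group $G$, quasi-convexity of a subgroup $H\leq G$ is an intrinsic property of the pair $(G,H)$ and does not depend on the chosen proper cocompact action of $G$ on a geodesic metric space. Concretely, $\widetilde{X}$ and $\widetilde{Y}$ are both CAT(0), hence geodesic, and the common group $G:=\pi_1(X)\cong\pi_1(Y)$ acts properly and cocompactly by isometries on each (viewing the $Y$-action through $\phi$). By the Milnor--\v{S}varc lemma both actions yield $G$-equivariant quasi-isometries between $\widetilde{X}$, $\widetilde{Y}$ and any Cayley graph of $G$. In a Gromov hyperbolic space, quasi-convexity of a subset is a quasi-isometry invariant, so an orbit of $H$ is quasi-convex in $\widetilde{X}$ if and only if the corresponding orbit of $\phi(H)$ is quasi-convex in $\widetilde{Y}$.

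Combining these two observations, $\phi$ induces a bijection between the quasi-convex subgroups of $\pi_1(X)$ and those of $\pi_1(Y)$, and this bijection preserves separability. Feeding this back into Theorem \ref{subgroupsep} yields the corollary. The only non-routine ingredient is the quasi-isometry invariance of quasi-convexity in hyperbolic spaces, which is the main potential obstacle from an expository standpoint, although it is entirely standard and would just be cited rather than reproved; everything else is bookkeeping through the isomorphism $\phi$.
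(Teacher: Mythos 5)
Your proposal is correct and matches the paper's (implicit) argument: the corollary is stated without proof precisely because Theorem \ref{subgroupsep} reduces virtual specialness to the condition that all quasi-convex subgroups of the fundamental group are separable, which is a property of the abstract hyperbolic group since both quasi-convexity (with respect to any word metric, equivalently any proper cocompact action) and separability are isomorphism-invariant. Your additional care in justifying the quasi-isometry invariance of quasi-convexity via Milnor--\v{S}varc is exactly the standard fact being silently used.
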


These theorems will be used in Section \ref{sec:gluing} in the following way. They allow us to glue together two compact NPC virtually special cube complexes $X_1$ and $X_2$ along locally convex subcomplexes $Z_1\cong Z_2$ (locally convex in the usual geodesic space sense), to produce a larger virtually special cube complex $X$ - provided that $X_1, X_2$ and $X$ have hyperbolic fundamental groups.
We first argue that the composite $X$ is NPC, and then by considering universal covers we argue that $\pi_1(Z_1)$ is quasi-convex in $\pi_1(X)$ (these things will be explained more carefully later in the paper). We may then apply Theorem \ref{qvhspecial} to deduce that $\pi_1(X)=\pi_1(Y)$ for some compact NPC virtually special cube complex $Y$, and finally Corollary \ref{bothspecial} implies that $X$ itself is virtually special.

One problem we will face in Section \ref{sec:gluing} is that we might need to pass to finite covers of $X_1$ and $X_2$ in order to get isomorphic subcomplexes that we can glue along. This will be facilitated by Theorem \ref{thm:command}, which is a consequence of Wise's Malnormal Special Quotient Theorem \cite[Theorem 12.3]{qconvexh}.

\begin{defn}(Almost malnormal subgroups)\\ \label{defn:almostMalnormal}
	Let $G$ be a group and $\mathcal{H}$ a collection of subgroups.
	The subgroups $\mathcal{H}$ are \emph{almost malnormal} (resp. \emph{malnormal}) if for $H, H' \in \mathcal{H}$ the intersection $H^g \cap H'$ being infinite (resp. non-trivial) implies that $H = H'$ and $g \in H$.
\end{defn}
\begin{thm}(Malnormal Special Quotient Theorem)\\ \label{thm:MSQT}
	Let $G$ be a virtually special hyperbolic group.
	Let $\{H_1, \ldots, H_m \}$ be an almost malnormal collection of quasi-convex subgroups.
	Then there exist finite index subgroups $\dot{H}_i \triangleleft H_i$, such that for any further finite index subgroups $\hat{H}_i < \dot{H}_i$, the quotient $\bar{G}=G / \llangle \hat{H}_1, \ldots, \hat{H}_m \rrangle$ is hyperbolic and virtually special.
\end{thm}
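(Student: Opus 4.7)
The plan is to realise $\bar G = G/\llangle \hat H_1, \ldots, \hat H_m \rrangle$ as the fundamental group of a suitable quotient cube complex, and then to extract hyperbolicity and virtual specialness from a cubical small cancellation argument, using the quasi-convex hierarchy (Theorem \ref{qvhspecial}) as the bridge between geometric and algebraic specialness.

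First, by passing to a finite index subgroup (which is allowed since the conclusion is about a further finite index filling) and invoking Theorem \ref{qvhspecial}, I would assume $G = \pi_1(X)$ for a compact NPC special cube complex $X$. Quasi-convexity of $H_i$ gives that $H_i$ acts cocompactly on a convex subcomplex of $\tilde X$, and separability from Theorem \ref{subgroupsep} lets me find a finite cover of $X$ inside which each $H_i$ is realised by an embedded compact locally convex subcomplex $Y_i$. To realise the filling by $\hat H_i < H_i$ rather than $H_i$, I would pass to the cover $\hat Y_i \to Y_i$ corresponding to $\hat H_i$, embed all of these simultaneously in a finite cover $\hat X \to X$ (using separability again), and then form a complex $X^+$ by attaching to each $\hat Y_i$ a cone whose cone point is collapsed; the fundamental group of $X^+/G$ is precisely $\bar G$.

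The crucial step is choosing the $\dot H_i$ so that, for \emph{any} further finite index $\hat H_i < \dot H_i$, the translates of $\hat Y_i$ inside $\tilde X$ satisfy a cubical small cancellation condition: distinct translates intersect only in pieces whose diameter is negligible compared to the systole of $\hat H_i$ acting on $\tilde Y_i$. This is exactly where almost malnormality is essential: it guarantees that the combinatorial patterns of infinite intersection among translates of $H_i$ are controlled, so up to the $G$-action only finitely many "collision configurations" appear, and each can be tamed by making $\hat H_i$ sufficiently deep (leveraging residual finiteness of $H_i$, itself coming from $H_i$ being virtually special).

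Once the small cancellation condition is in place, I expect hyperbolicity of $\bar G$ to follow from a cubical Greendlinger lemma, in the spirit of the Osin and Groves--Manning Dehn filling theorems. For virtual specialness, the natural route is to exhibit a quasi-convex hierarchy for $\bar G$ by combining a hierarchy of $G$ (supplied by Theorem \ref{qvhspecial}) with cuts along the new walls introduced by the cones, and then invoke the reverse direction of Theorem \ref{qvhspecial}. The main obstacle throughout is verifying that quasi-convexity of edge groups persists under all the passages to covers and through the small cancellation quotient; both require delicate geometric control of how geodesics in $\tilde X^+$ interact with the translates of the coned-off subcomplexes, and this is the technical heart where I expect the bulk of the work to lie.
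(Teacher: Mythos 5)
The paper does not prove this statement at all: the Malnormal Special Quotient Theorem is quoted as a black box from Wise \cite[Theorem 12.3]{qconvexh} (an alternative proof appears in the appendix to Agol's paper, which these notes explicitly decline to revisit). So there is no in-paper argument to compare yours against; what matters is whether your proposal stands on its own, and it does not. Your outline correctly identifies the architecture of the known proofs --- realise $\bar{G}$ as the fundamental group of a coned-off cube complex, use almost malnormality plus separability to arrange a cubical small cancellation condition for all sufficiently deep $\hat{H}_i$, deduce hyperbolicity by a Greendlinger-type argument, and then establish virtual specialness via the hierarchy --- but every one of these steps is itself a substantial theorem, and the proposal defers all of them. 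In particular, ``exhibit a quasi-convex hierarchy for $\bar{G}$ by combining a hierarchy of $G$ with cuts along the new walls'' is not a routine combination: it is the crux of Wise's proof, requiring an induction down the hierarchy of $G$ in which one must show that each induced filling of a vertex group is again a sufficiently deep filling of an almost malnormal quasi-convex collection, that images of edge groups remain quasi-convex in the quotient, and that the annular diagrams controlling malnormality survive the filling. None of this follows formally from Theorem \ref{qvhspecial}; it occupies the bulk of Wise's monograph. Likewise, hyperbolicity of quotients by sufficiently deep fillings of a malnormal quasi-convex collection is the Osin/Groves--Manning Dehn filling theorem, another external result you would need to invoke or prove.

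There is also a concrete flaw in your opening reduction. You pass to a finite index special subgroup $G_0 < G$ ``since the conclusion is about a further finite index filling,'' but the theorem concerns the quotient of $G$ by the normal closure of the $\hat{H}_i$ \emph{in $G$}, not in $G_0$; the group $G_0/\llangle \hat{H}_i \rrangle_{G_0}$ is in general neither a finite index subgroup nor a quotient of $G/\llangle \hat{H}_i \rrangle_G$, so hyperbolicity and virtual specialness do not transfer back without additional work (one must saturate the normal closure under $G$-conjugation, which changes the collection of filling subgroups inside $G_0$ and requires rechecking almost malnormality for the enlarged collection). As written, the proposal is a faithful summary of the strategy in the literature rather than a proof, and the genuinely hard content --- which you yourself flag as ``the technical heart'' --- is exactly what is missing.
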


\begin{thm}\label{thm:command}
Let $G$ be a virtually special hyperbolic group.
Let $\{H_1, \ldots, H_m \}$ be an almost malnormal collection of quasi-convex subgroups.
Then there exist finite index subgroups $\dot{H}_i \triangleleft H_i$, such that for any further finite index subgroups $\hat{H}_i < \dot{H}_i$, with $\hat{H}_i\triangleleft H_i$, there exists a finite index subgroup $\hat{G}\triangleleft G$ with $\hat{G}\cap H_i=\hat{H}_i$.
\end{thm}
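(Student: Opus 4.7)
The plan is to apply the Malnormal Special Quotient Theorem (Theorem \ref{thm:MSQT}) to obtain the subgroups $\dot H_i \triangleleft H_i$, and then, given any further finite-index $\hat H_i \triangleleft H_i$ with $\hat H_i < \dot H_i$, to construct $\hat G$ by pulling back a suitable finite-index normal subgroup of the virtually special hyperbolic quotient $\bar G := G/\llangle \hat H_1,\dots,\hat H_m\rrangle$. The engine is that virtually special groups are residually finite (as noted in the introduction), so $\bar G$ is residually finite, and this lets us separate the images of the $H_i$ from prescribed nontrivial elements.

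Concretely, write $K = \llangle \hat H_1,\dots,\hat H_m\rrangle \triangleleft G$ and $\bar G = G/K$. The key claim I will need is that the natural map $H_i \to \bar G$ has kernel exactly $\hat H_i$, equivalently that $K \cap H_i = \hat H_i$; I return to this below. Granting it, the image $\bar H_i$ of $H_i$ in $\bar G$ is the finite group $H_i/\hat H_i$, so $\bigcup_i (\bar H_i\setminus\{1\})$ is a finite subset of $\bar G$. Residual finiteness of $\bar G$ then yields a finite-index normal subgroup $\bar N \triangleleft \bar G$ with $\bar N \cap \bar H_i = \{1\}$ for every $i$. Let $\hat G \triangleleft G$ be the preimage of $\bar N$ under $G \to \bar G$; it is a finite-index normal subgroup of $G$. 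For $h \in H_i$, one has $h \in \hat G$ iff its image in $\bar G$ lies in $\bar N$; since that image also lies in $\bar H_i$ and $\bar N \cap \bar H_i = \{1\}$, this happens iff $h \in K \cap H_i = \hat H_i$. Hence $\hat G \cap H_i = \hat H_i$, as required.

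The one step needing justification is the claim $K \cap H_i = \hat H_i$, and this is where the main obstacle lies. The inclusion $\hat H_i \subseteq K \cap H_i$ is immediate. For the reverse, the hypothesis that $\hat H_i$ is normal in $H_i$ (and not merely in $\dot H_i$, which is what Theorem \ref{thm:MSQT} alone would need) ensures that $\hat H_i$ already equals its $H_i$-normal closure, while almost malnormality of $\{H_1,\dots,H_m\}$ prevents conjugates $g\hat H_j g^{-1}$ with $j\neq i$ or $g\notin H_i$ from contributing to $K \cap H_i$ beyond $\hat H_i$. This ``peripheral injectivity'' is a standard output of the Dehn filling machinery (Osin, Groves--Manning) underlying the MSQT, and one expects it to be extractable from the proof of Theorem \ref{thm:MSQT} once the stronger normality hypothesis on $\hat H_i$ is imposed. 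Granting it, the argument above completes the proof.
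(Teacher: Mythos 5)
Your overall architecture is the same as the paper's: apply Theorem \ref{thm:MSQT} to get the $\dot{H}_i$, form $\bar{G}=G/\llangle \hat{H}_1,\dots,\hat{H}_m\rrangle$, find a finite-index normal subgroup of $\bar{G}$ meeting each (finite) image $\bar{H}_i$ trivially, and pull it back. Your use of residual finiteness of $\bar{G}$ here is a harmless variant of the paper's choice of a finite-index torsion-free subgroup (which meets the finite $\bar{H}_i$ trivially for free); either works once one knows $\bar{H}_i$ is finite, which already follows from the easy inclusion $\hat{H}_i\subseteq K\cap H_i$.

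The genuine gap is the key claim $K\cap H_i=\hat{H}_i$, which you correctly isolate but do not prove, and the heuristic you offer for it is not the right mechanism. Normality of $\hat{H}_i$ in $H_i$ together with almost malnormality of the $H_i$ does not by itself prevent products of conjugates $g\hat{H}_jg^{-1}$ from accumulating inside $H_i$ beyond $\hat{H}_i$: this is exactly the phenomenon that group-theoretic Dehn filling controls only for \emph{sufficiently deep} fillings. The correct tool is Osin's peripheral filling theorem (cited in the paper as \cite[Theorem 1.1(1)]{Osin}), which guarantees $\ker(H_i\to\bar{G})=\hat{H}_i$ \emph{provided} each $\hat{H}_i$ avoids a prescribed finite subset of $H_i\setminus\{1\}$ depending only on $G$ and the $H_i$. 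This depth condition is not automatic from $\hat{H}_i<\dot{H}_i$ with the $\dot{H}_i$ handed to you by the MSQT; the missing step is to use the freedom in the statement of the theorem to shrink the $\dot{H}_i$ further at the outset (intersecting them with finite-index subgroups avoiding Osin's finite sets, which exist by separability/residual finiteness of the $H_i$), so that every admissible $\hat{H}_i$ automatically satisfies the depth condition. This is what the paper means by ``retrospectively modifying the $\dot{H}_i$''; note that the MSQT conclusion is stable under such shrinking, so both theorems can be invoked simultaneously. With that adjustment your argument closes up and coincides with the paper's.
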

\begin{proof}
	Pick the subgroups $\dot{H}_i$ according to Theorem \ref{thm:MSQT}. Then given further finite index subgroups $\hat{H}_i < \dot{H}_i$, with $\hat{H}_i\triangleleft H_i$, we may take the quotient $\bar{G}=G / \llangle \hat{H}_1, \ldots, \hat{H}_m \rrangle$, and Theorem \ref{thm:MSQT} tells us that it is virtually special. Fundamental groups of NPC cube complexes are torsion-free, as any torsion element would act on the universal cover with bounded orbits and fix a point by \cite[II.2.7]{nonpos}, hence $\bar{G}$ contains a finite index torsion-free subgroup $\hat{\bar{G}}\triangleleft \bar{G}$. Let $\hat{G}\triangleleft G$ be the preimage of $\hat{\bar{G}}$ with respect to the quotient map $G\to\bar{G}$. Each subgroup $H_i$ will have finite image in $\bar{G}$, so will intersect $\hat{\bar{G}}$ trivially, thus $\hat{G}\cap H_i=\ker(H_i\to\bar{G})$. By \cite[Theorem 1.1(1)]{Osin} $\ker(H_i\to\bar{G})=\hat{H}_i$ provided each $\dot{H}_i$ misses a given finite set in $H_i$ (which only depends on the $H_i$), so we are done by retrospectively modifying the $\dot{H}_i$.
\end{proof}

The last big theorem we state in this section, due to Agol, Groves and Manning, appears in the appendix of Agol's paper. We will use this theorem in the next section to take a quotient of the CAT(0) cube complex $X$ from Theorem \ref{mainthm} that makes the walls finite.

\begin{thm}(Agol-Groves-Manning, 2013)\cite[Theorem A.1]{VHC}\label{A1theorem}\\ 
Let $G$ be a hyperbolic group and $H<G$ a quasi-convex virtually special subgroup. Then for any $g\in G-H$, there is a hyperbolic group $\mathcal{G}$ and a homomorphism $\phi:G\to\mathcal{G}$ such that $\phi(g)\notin\phi(H)$ and $\phi(H)$ is finite.
\end{thm}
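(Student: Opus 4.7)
The plan is to apply a Groves--Manning-style relatively hyperbolic Dehn filling to $G$ after enlarging $H$ to an almost malnormal quasi-convex peripheral collection, and to arrange the filling so that it both kills $H$ down to a finite image and keeps $g$ outside $\phi(H)$.

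First, I would enlarge $H$ to an almost malnormal quasi-convex collection $\{K_1,\dots,K_m\}$ in $G$ with $H$ of finite index in $K_1$. This uses the finite-height property of quasi-convex subgroups of hyperbolic groups (Gitik--Mitra--Rips--Sageev): taking $K_1=\mathrm{Comm}_G(H)$ (which contains $H$ with finite index and is quasi-convex), together with the commensurators of representatives of the finitely many conjugacy classes of maximal infinite intersections of conjugates of $H$, produces such a collection. Every $K_i$ contains some intersection of conjugates of $H$ as a finite-index subgroup; such an intersection is a subgroup of $H$, hence residually finite (since virtual specialness implies residual finiteness), and therefore $K_i$ itself is residually finite. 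Because the collection is almost malnormal and quasi-convex in the hyperbolic group $G$, Bowditch's theorem makes $G$ hyperbolic relative to $\{K_1,\dots,K_m\}$.

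Second, I would invoke the Groves--Manning relatively hyperbolic Dehn filling theorem: for any sufficiently deep choice of finite-index normal subgroups $N_i\triangleleft K_i$, the quotient $\mathcal{G}:=G/\llangle N_1,\dots,N_m\rrangle$ is hyperbolic and the induced maps $K_i/N_i\to \mathcal{G}$ are injective. I would choose $N_1$ to lie inside the core of $H$ in $K_1$ (possible since $[K_1:H]<\infty$), so that the image $\phi(H)=H/N_1$ is automatically finite; residual finiteness of each $K_i$ ensures that arbitrarily deep fillings are available.

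Finally, to ensure $\phi(g)\notin\phi(H)$ I would split into cases. If $g\in K_1\setminus H$, then $g\notin HN_1=H$ and injectivity of $K_1/N_1\to\mathcal{G}$ forces $\phi(g)\notin\phi(H)$. If $g$ is not conjugate into any $K_i$, a standard strengthening of the Dehn filling theorem that tracks a finite prescribed set of elements guarantees $\phi(g)\notin\phi(K_1)\supseteq\phi(H)$ for sufficiently deep fillings. The remaining case, where $g$ is conjugate into some $K_i$ but not into $H$, is handled by an analogous coset-separation argument using almost malnormality of the peripherals. The main obstacle is precisely this last step: producing a single filling that simultaneously crushes $H$ to a finite image and keeps $g$ outside $\phi(H)$ requires the full strength of the Groves--Manning machinery combined with residual finiteness of the $K_i$, and the technical care lies in choosing each $N_i$ deep enough to handle all the cases above at once.
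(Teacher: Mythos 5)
The paper does not actually prove this theorem --- it explicitly defers to the appendix of Agol's paper \cite{VHC}, which relies on Wise's Malnormal Special Quotient Theorem --- so I am comparing your proposal against the Agol--Groves--Manning argument. There is a genuine gap at your very first step: the collection $\{K_1,\dots,K_m\}$ you construct is in general \emph{not} almost malnormal. Almost malnormality of the collection already requires the single condition that $K_1^{g}\cap K_1$ infinite implies $g\in K_1$. Unless every $g\in G$ with $H\cap H^{g}$ infinite happens to commensurate $H$ (which essentially means $H$ is almost malnormal up to finite index, i.e.\ has width one), there is some $g_0\notin \mathrm{Comm}_G(H)=K_1$ with $H\cap H^{g_0}$ infinite, and then $K_1^{g_0}\cap K_1\supseteq H^{g_0}\cap H$ is infinite while $g_0\notin K_1$. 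Adjoining the commensurators of the multiple intersections as $K_2,\dots,K_m$ cannot repair this, because the offending condition involves only $K_1$. Consequently neither Bowditch's criterion nor the Groves--Manning Dehn filling theorem applies, and the construction only works for subgroups of width one. A second warning sign: you use virtual specialness of $H$ only to extract residual finiteness of the $K_i$, so if the argument worked it would prove the theorem for every residually finite quasi-convex subgroup --- a substantially stronger statement that is not available and should make you suspicious.

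The actual proof is an induction on the height $k$ of $H$. The subgroups that genuinely form an almost malnormal quasi-convex collection are (commensurators of conjugacy representatives of) the \emph{maximal infinite $k$-fold intersections} of conjugates of $H$: an element conjugating one of these to meet another in an infinite subgroup would produce an infinite $(k{+}1)$-fold intersection, contradicting the definition of height. One applies a version of the Malnormal Special Quotient Theorem to this collection --- not plain Dehn filling over residually finite peripherals; virtual specialness is needed here so that the quotient is hyperbolic and the image of $H$ remains quasi-convex and virtually special --- thereby making these intersections finite in the quotient, where the image of $H$ has strictly smaller height. Iterating until the height is zero makes $\phi(H)$ finite, and one tracks $g$ through each quotient. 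Your final step, separating $\phi(g)$ from the finite set $\phi(H)$ by choosing the filling deep enough relative to a finite set of elements $gt^{-1}$, is the right idea and does appear in the genuine argument, but it cannot substitute for the height induction.
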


\section{Making walls finite}\label{sec:makefinite}

From now on let $G$ be a hyperbolic group acting properly and cocompactly on a CAT(0) cube complex $X$ as in Theorem \ref{mainthm}. The object of this section is to construct a quotient map $X\to\mathcal{X}$ such that walls in $\mathcal{X}$ are finite, and so that distinct walls in $X$ which are close together map to distinct walls in $\mathcal{X}$. The quotient complex $\mathcal{X}$ will be important for defining the local colouring data used in later sections. This section is based on \S4 of Agol's paper, but with considerably more detail added.

We may assume $X$ is unbounded since the theorem is trivial otherwise. For $x,y\in X$ we will use $[x,y]$ to denote the unique geodesic segment between them (with respect to the CAT(0) metric, as always). 

\begin{remk}\label{noswap}
By passing to the barycentric subdivision of $X$, we can assume that for every wall $W$ in $X$, $G_W$ does not exchange the sides of $W$, so $gW^\pm=W^\pm$ for all $g\in G_W$. By appropriate choice of labelling we can also assume that $gW^\pm=(gW)^\pm$ for all $g\in G$.
\end{remk}

\begin{prop}
$X$ is finite dimensional, locally finite and $\delta$-hyperbolic (for some $\delta$).
\end{prop}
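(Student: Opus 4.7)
The plan is to verify the three properties in turn, using the proper cocompact action and the hyperbolicity of $G$ in sequence.

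For finite-dimensionality, I would invoke cocompactness directly: since $G$ acts cubically on $X$ with compact quotient, there are only finitely many $G$-orbits of cubes in $X$. The set of cube dimensions occurring in $X$ is therefore finite, hence bounded, so $X$ is finite-dimensional.

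For local finiteness, the plan is to show by contradiction that only finitely many cubes meet any given vertex $v$. If infinitely many cubes contained $v$, cocompactness would force infinitely many of them to lie in a single $G$-orbit $\{g_j C\}$ of a fixed cube $C$. Since the stabiliser of $C$ is finite (by properness), one can choose the $g_j$ to lie in distinct cosets of this stabiliser, giving infinitely many distinct $g_j\in G$ with $v\in g_j C$, equivalently $g_j^{-1}v\in C$. Taking the compact set $K := C\cup\{v\}$, each such $g_j$ satisfies $v\in g_j K\cap K$, contradicting properness of the action.

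Hyperbolicity then follows from the \v{S}varc--Milnor lemma. By the previous two steps, $X$ with its CAT(0) metric is a proper geodesic space, so the proper cocompact isometric action of $G$ makes $G$ finitely generated and quasi-isometric to $X$. Since Gromov hyperbolicity is a quasi-isometry invariant of geodesic metric spaces and $G$ is hyperbolic by hypothesis, $X$ is $\delta$-hyperbolic for some $\delta\geq 0$.

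The main obstacle is the local-finiteness step, which bridges the group-theoretic notion of a proper action and the combinatorial notion of local finiteness of a cube complex; it is also the only step not reducible to a single black-box citation. The other two properties are essentially routine applications of standard tools (counting orbits for dimension, and \v{S}varc--Milnor plus QI-invariance for hyperbolicity) once the setup is in place.
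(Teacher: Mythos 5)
Your proposal is correct and follows essentially the same route as the paper: finite dimensionality from cocompactness, local finiteness by pigeonholing infinitely many cubes at a vertex into a single $G$-orbit and contradicting properness, and hyperbolicity via \v{S}varc--Milnor and quasi-isometry invariance. The only cosmetic difference is that you derive the contradiction from the compact set $C\cup\{v\}$ whereas the paper pigeonholes further onto a single vertex of $C$; both are valid.
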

\begin{proof}
$X$ is finite dimensional because $G$ acts cocompactly on it. Now suppose $X$ is not locally finite and that $x\in X$ is a vertex contained in infinitely many cubes. By cocompactness there is a cube $C$ and $A\subset G$ such that $A\cdot C$ is an infinite family of cubes each containing $x$; but then $C$ must have a vertex $x'$ such that $gx'=x$ for infinitely many $g\in A$, contradicting properness at x. Lastly, by \v{S}varc-Milnor, for any $x\in X$ the map $G\to X,\, g\mapsto gx$ is a quasi-isometry; hyperbolicity is a quasi-isometry invariant for geodesic spaces, and so $X$ is $\delta$-hyperbolic for some $\delta$.
\end{proof}

The remainder of this paper will be a proof of Theorem \ref{mainthm} by induction on $\dim X$ (the case $\dim X=0$ is trivial), so from now on assume that the theorem holds for lower dimensional cases - we will need this in the next two lemmas (in fact only for those lemmas).
 
These lemmas will use a few standard facts about hyperbolic and CAT(0) spaces, which we now recall.
\begin{enumerate}[(1)]
	\item For a geodesic $n$-gon in a $\delta$-hyperbolic space $Y$, each side is within the $(n-2)\delta$ neighbourhood of the union of the other sides (proof: subdivide the $n$-gon into triangles).
	\item If $C$ is a closed convex subspace of a CAT(0) space $Y$, then there is a well-defined closest point projection map $p:Y\to C$, and this map is distance non-increasing (see \cite[II.2.4]{nonpos}). In addition, $p$ commutes with any isometry of $Y$ that preserves $C$.
	\item If $C$ is a closed convex subspace of a CAT(0) $\delta$-hyperbolic space $Y$, $p:Y\to C$ the closest point projection map, and $A\subset Y$ another convex subspace with $p(A)$ unbounded, then $d(A,C)<2\delta$. Moreover, $N_{2\delta}(A)\cap C$ will be unbounded.
	
\begin{proof}
Let $x,y\in A$ and suppose $z\in[p(x),p(y)]\subset C$ with $d(z,p(x)),d(z,p(y))>4\delta$.	
	
\begin{minipage}{.4\textwidth}
	\centering
	\includegraphics[width=0.9\textwidth, clip=true ]{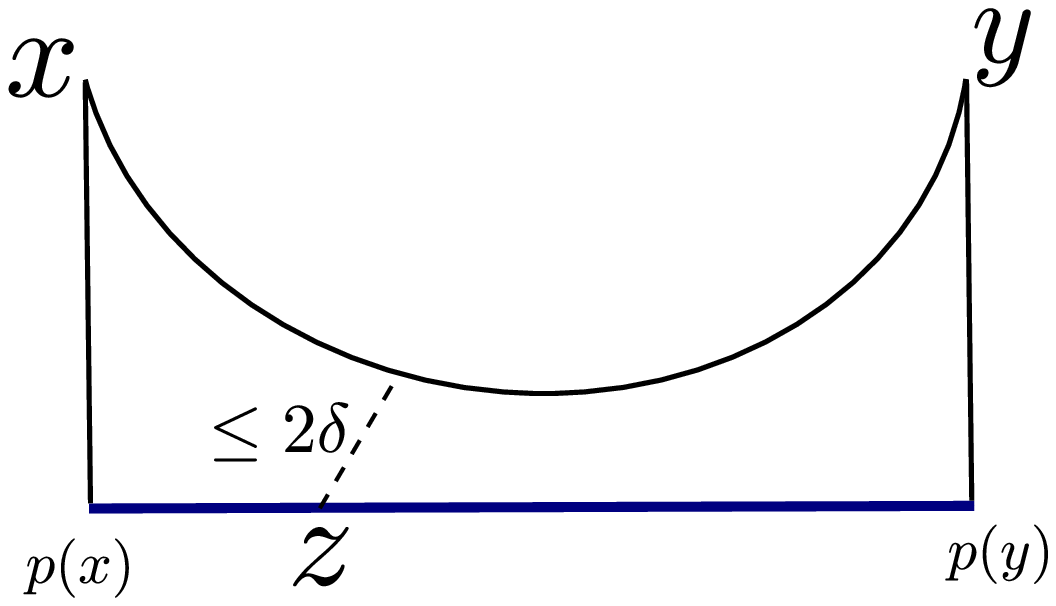}
\end{minipage}
\begin{minipage}{.5\textwidth}
	We must have $d(z,[x,p(x)])\geq2\delta$ as otherwise $z$ would be closer to $x$ than $p(x)$ is. Similarly  we must have $d(z,[y,p(y)])\geq2\delta$. By applying fact (1) to the geodesic quadrilateral shown, we deduce that $d(z,[x,y])<2\delta$. Thus $N_{2\delta}(A)\cap[p(x),p(y)]$ contains all of $[p(x),p(y)]$ except possibly the end segments of length $4\delta$.  As we can have $d(p(x),p(y))$ arbitrarily large, we see that $N_{2\delta}(A)\cap C$ must be unbounded.	
\end{minipage}
\end{proof}
\end{enumerate}

\begin{lem}\label{closewalls}
Either we can choose $W_1,...,W_m$ orbit representatives for the walls of $X$ such that $d(W_i,W_j)>3\delta$ for all $1\leq i<j\leq m$, or Theorem \ref{mainthm} holds.
\end{lem}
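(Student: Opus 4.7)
The plan is to split into cases based on properties of $G$ and its wall stabilizers: handle two degenerate cases via the inductive hypothesis (giving the ``Theorem \ref{mainthm} holds'' conclusion), and in the remaining case construct the $W_i$ greedily via a limit-set argument in $\partial X$.

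First I would address the degenerate cases. If $G$ is elementary (finite or virtually $\mathbb{Z}$), then $G$ is virtually free, hence virtually special, and applying Corollary~\ref{bothspecial} to $X/G'$ for a torsion-free finite-index $G' \leq G$ yields Theorem~\ref{mainthm}. If instead some wall stabilizer $G_W$ has finite index in $G$, then by Remark~\ref{cocompactstabiliser} $G_W$ acts properly cocompactly on $W$, a CAT(0) cube complex of strictly smaller dimension; the inductive hypothesis then makes $G_W$ virtually special, hence so is $G$, and Corollary~\ref{bothspecial} again yields Theorem~\ref{mainthm}.

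With these cases disposed of, assume $G$ is non-elementary and every wall stabilizer is infinite-index in $G$. Fix orbit representatives $W_1^0,\dots,W_m^0$ and pick $W_k \in [W_k^0]$ greedily so that $d(W_k,W_i)>3\delta$ for all $i<k$. The key claim is that this procedure never fails. If it did fail at step $k$, then every wall of the orbit $[W_k^0]$ would meet $\bigcup_{i<k} N_{3\delta}(W_i)$, forcing the entire orbit to lie in $\bigcup_{i<k} N_{3\delta}(W_i)$. For any $V \in [W_k^0]$ and any $v \in V$, taking limit sets in $\partial X$ would then give
\[ \partial X \;=\; \Lambda(G \cdot v) \;\subseteq\; \Lambda(G \cdot V) \;\subseteq\; \Lambda\Big(\bigcup_{i<k} N_{3\delta}(W_i)\Big) \;=\; \bigcup_{i<k} \Lambda(W_i), \]
using that under a non-elementary proper cocompact action the orbit of any point has limit set all of $\partial X \cong \partial G$, and that bounded neighborhoods do not alter limit sets. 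But each $\Lambda(W_i)=\Lambda(G_{W_i})$ is the limit set of an infinite-index quasi-convex subgroup of a hyperbolic group, hence a closed nowhere-dense subset of $\partial G$, and a finite union of closed nowhere-dense subsets cannot cover the Baire space $\partial G$; contradiction.

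The hard part is the last step, which relies on the classical dynamical fact that infinite-index quasi-convex subgroups of non-elementary hyperbolic groups have nowhere-dense limit sets in the boundary, together with the \v{S}varc--Milnor identification $\partial X \cong \partial G$. The quasi-convexity of wall stabilizers follows from Remark~\ref{cocompactstabiliser} (cocompact stabilizers of convex subspaces embed quasi-isometrically), and all remaining steps are routine bookkeeping.
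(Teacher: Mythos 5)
Your two degenerate cases are fine (and the finite-index-stabiliser case is essentially the paper's first case), but the main case contains a genuine gap. From ``the greedy choice fails at step $k$'' you get only that every translate $gV$ of $W_k^0$ \emph{intersects} $\bigcup_{i<k}N_{3\delta}(W_i)$; you then assert that this forces the entire orbit to \emph{lie in} that union, and this is a non sequitur. A wall $gV$ is an unbounded convex set that may touch $N_{3\delta}(W_i)$ at one end and run off to infinity in a completely different direction, so neither $gV$ nor even the orbit $G\cdot v$ of a point $v\in V$ is contained in $\bigcup_{i<k}N_{3\delta}(W_i)$, and the inclusion $\Lambda(G\cdot V)\subseteq\bigcup_{i<k}\Lambda(W_i)$ does not follow. (Indeed $\Lambda(G\cdot V)=\partial X$ always holds for a non-elementary cocompact action, so the displayed chain, if correct, would prove that \emph{no} wall orbit with infinite-index stabiliser exists.) The Baire-category endgame is sound, but it is fed a false containment, and I do not see how to repair it without changing the argument: disjointness of limit sets does not by itself produce the quantitative separation $d(gV,W_i)>3\delta$ that the lemma requires.

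What is missing is a mechanism for converting boundary information into metric separation, and this is exactly what the paper's proof supplies: it fixes an infinite-order element $b$ with axis $\gamma$ and uses the closest-point projection $p:X\to\gamma$. Either some wall $W$ has $p(gW)$ unbounded for every $g$ --- in which case fact (3) and local finiteness force the orbit of $W$ to be finite, so $G_W$ has finite index and induction on dimension finishes (your second degenerate case) --- or every orbit has a representative $W_i$ with $p(W_i)$ bounded, and then one translates the $W_i$ by powers of $b$ so that their projections to $\gamma$ are more than $3\delta$ apart; since $p$ is $1$-Lipschitz this forces $d(b^{n_i}W_i,b^{n_j}W_j)>3\delta$. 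If you try to salvage your limit-set approach (e.g.\ by choosing a loxodromic whose endpoints avoid the finitely many nowhere-dense limit sets $\Lambda(G_{W_i})$ and $\Lambda(G_V)$, so that all the relevant projections to its axis are bounded), you will find yourself reconstructing precisely this projection argument.
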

\begin{proof}	
 $G$ is hyperbolic, so contains an infinite order element $b$ \cite[$\Gamma$.2.22]{nonpos}. All isometries of $X$ are semi-simple \cite[II.6.10(2)]{nonpos}, so $b$ acts hyperbolically on $X$, and acts by translations on an axis $\gamma$ in $X$ (a geodesic line in $X$) \cite[II.6.8(1)]{nonpos}. Let $p:X\to\gamma$ be the closest point projection map to $\gamma$.
  
 First suppose there is a wall $W$, such that $p(gW)$ is unbounded for all $g\in G$. By fact (3), we know that $\gamma\cap N_{2\delta}(gW)$ is unbounded for all $g\in G$. Because we are in a CAT(0) space, and $\gamma$ and $gW$ are convex, we see that  $\gamma\cap N_{2\delta}(gW)$ is convex, so contains an infinite subinterval of $\gamma$. We deduce that there are only finitely many distinct translates $gW$, else infinitely many of them would be within $2\delta$ of some point on $\gamma$, contradicting local finiteness of $X$. This means that the stabiliser $G_{W}$ is finite index in $G$. But $G_{W}$ acts properly cocompactly on the CAT(0) cube complex $W$, so by the lower dimensional case of theorem \ref{mainthm} there is a finite index subgroup $G'<G_{W}$ acting freely on $W$ such that $W/G'$ is special. Then $G'$ also acts freely on $X$, for if $g\in G'$ fixed $x\in X$ then $g$ would also fix $p(x)$. Then $X/G'$ is virtually special by Corollary \ref{bothspecial}, and Theorem \ref{mainthm} holds by replacing $G'$ with a further finite index subgroup.
  
 Conversely, suppose that for every wall $W$ there exists $g\in G$ with $p(gW)$ bounded. Let $W_1,...,W_m$ be orbit representatives for the walls of $X$ such that $p(W_i)$ is bounded for each $i$. Each $p(W_i)$ is contained in a finite subinterval of $\gamma$, and $b$ acts as a translation along $\gamma$, so we may choose $n_1,...,n_m\in\mathbb{Z}$ such that $d(p(b^{n_i}W_i),p(b^{n_j}W_j))=d(b^{n_i}p(W_i),b^{n_j}p(W_j))>3\delta$ for all $1\leq i<j\leq m$. But $p$ is distance non-increasing by fact (2), thus $d(b^{n_i}W_i,b^{n_j}W_j)>3\delta$ for all $1\leq i<j\leq m$, as required.
\end{proof}

Henceforth we will assume that we are in the first scenario of Lemma \ref{closewalls}, so for the remainder of this section let $W_1,...,W_m$ be orbit representatives for the walls of $X$ such that $d(W_i,W_j)>3\delta$ for all $1\leq i,j\leq m$. We are now ready for the main technical lemma of this section, which will be used in Lemma \ref{propmathcalX} to produce a quotient of $X$ with finite walls. Note that the homomorphism $\phi:G\to\mathcal{G}$ will be a product of homomorphisms obtained from Theorem \ref{A1theorem}, so in particular $\mathcal{G}$ might not be hyperbolic.

\begin{lem}\label{mathcalX}
For any $R>1$ large enough so that $G\cdot B=X$ for any $R$-ball $B$ in $X$, there exists a surjective homomorphism $\phi :G\to\mathcal{G}$ with kernel $K$ and $H_i \triangleleft G_{W_i}$ finite index such that
\begin{enumerate}[(1)]
\item $\phi(H_i)$ are all finite,
\item if $g\in G- H_i$ with $d(gW_i, W_i)\leq 2R$ then $\phi(g)\notin\phi(H_i)$,
\item $K$ is torsion-free (so acts freely on $X$).
\end{enumerate}
\end{lem}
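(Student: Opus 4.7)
By the inductive hypothesis on $\dim X$, each wall stabiliser $G_{W_i}$ is virtually special---it acts properly and cocompactly on the wall $W_i$, a CAT(0) cube complex of strictly smaller dimension (Remark \ref{cocompactstabiliser}). Combined with quasi-convexity of $G_{W_i}$ in $G$ (from convexity of $W_i\subset X$ and cocompactness via \v{S}varc--Milnor), this means the hypotheses of Theorem \ref{A1theorem} are met with $H=G_{W_i}$. I then enumerate the finitely many pieces of data that $\phi$ must separate. For each $i$, the walls $gW_i$ with $d(gW_i,W_i)\le 2R$ fall into finitely many $G_{W_i}$-orbits: realising the distance by $p\in gW_i$, $q\in W_i$ and shifting by $G_{W_i}$ so that $q$ lies in a fixed compact fundamental domain $D\subset W_i$, we see that $p$ lies in the compact $2R$-neighbourhood of $D$, which meets only finitely many walls by local finiteness of $X$. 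I pick coset representatives $g_{i,1},\dots,g_{i,k_i}\in G\setminus G_{W_i}$. Since $G$ is hyperbolic it also has only finitely many conjugacy classes of non-trivial torsion, with representatives $t_1,\dots,t_n$.

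\textbf{Construction of $\phi$.} I will build $\phi$ as an iterated composition of Theorem \ref{A1theorem} applications. Starting with the identity $G\to G$, I process the wall stabilisers one at a time: in the current hyperbolic quotient, apply Theorem \ref{A1theorem} with $H$ the image of the next $G_{W_s}$ (which remains quasi-convex and virtually special, thanks to the Malnormal Special Quotient Theorem construction underlying Theorem \ref{A1theorem}) to make this image finite. Earlier finite images stay finite under any further quotient. After $m$ such steps I have a map $\Phi:G\to\mathcal{G}^{*}$ with $\Phi(G_{W_i})$ finite for every $i$. Then I continue to compose with further Theorem \ref{A1theorem} applications: one to separate each $\Phi(g_{i,j})$ from $\Phi(G_{W_i})$ (using $H$ the current image of $G_{W_i}$ and $g$ the current image of $g_{i,j}$, re-separating if a later step collapses them), and one for each torsion class using $H=\{1\}$ and $g=t_k$ to keep $\Phi(t_k)\ne 1$. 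Let $\phi:G\to\mathcal{G}$ be the surjection onto the final image, set $K=\ker\phi$, and define $H_i:=G_{W_i}\cap K$.

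\textbf{Verification and main obstacle.} For~(1), $\phi(G_{W_i})$ is finite by construction, so $H_i\triangleleft G_{W_i}$ has finite index and $\phi(H_i)=\{1\}$. For~(2), take $g\in G\setminus H_i$ with $d(gW_i,W_i)\le 2R$: if $g\in G_{W_i}\setminus H_i$ then $g\notin K$, so $\phi(g)\ne 1$; otherwise $g\in G_{W_i}g_{i,j}G_{W_i}$ for some $j$, so $g=h_1g_{i,j}h_2$ with $h_1,h_2\in G_{W_i}$, and $\phi(g)=1$ would force $\phi(g_{i,j})\in\phi(G_{W_i})$, contradicting our separation. For~(3), any non-trivial torsion $t\in G$ is conjugate to some $t_k$, so its $\phi$-image is conjugate to the non-trivial $\phi(t_k)$ and is itself non-trivial, placing $t$ outside $K$; the free action of $K$ on $X$ then follows from properness of $G\curvearrowright X$, since any element fixing a point of $X$ would have finite order. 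The main technical subtlety---and the reason for the iterative rather than direct-product construction---is arranging condition~(1): a naive product of Theorem \ref{A1theorem} homomorphisms would only control one $G_{W_i}$ per factor, leaving $\phi(G_{W_i})$ infinite in general, whereas iterating forces all $m$ wall-stabiliser images to be finite at once, at the cost of needing quasi-convexity and virtual specialness of the remaining stabilisers to persist in each intermediate quotient.
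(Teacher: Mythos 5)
Your verification of (1)--(3), your enumeration of double cosets, and your handling of torsion all match what is needed; the gap is in the construction of $\phi$ itself. The iterated-quotient scheme rests on two assertions that Theorem \ref{A1theorem} does not supply. First, to apply the theorem a second time you need the image of the next wall stabiliser $G_{W_s}$ to be quasi-convex and virtually special in the intermediate hyperbolic quotient $\mathcal{G}^{*}$; nothing in the statement of Theorem \ref{A1theorem} (nor in Theorem \ref{thm:MSQT}) controls the images of subgroups other than the one being filled, and your parenthetical appeal to ``the construction underlying'' the theorem is hand-waving over what would be a substantial piece of Dehn-filling theory. Second, separations are not stable under further quotients: if a later application of the theorem happens to send the current image of $g_{i,j}$ into the (finite) image of $G_{W_i}$, or to kill a torsion representative, this cannot be undone by passing to a further quotient --- your ``re-separating if a later step collapses them'' is incoherent --- and Theorem \ref{A1theorem} gives no control over which elements outside $H$ get collapsed at each stage.

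The paper resolves exactly the tension you identify (a direct product of fillings makes only one wall stabiliser finite per factor, while an iteration loses the hypotheses) by a third route. It first replaces each $G_{W_i}$ by a finite-index subgroup $H_i$ acting freely on $W_i$ with $C$-special quotient and with displacement $d(hA_i,A_i)>1$ for certain bounded projection sets $A_i$; it then runs a ping-pong argument with the closest-point projections $p_i$ (using $d(W_i,W_j)>3\delta$) to show that $\langle H_1,\dots,H_m\rangle\cong H_1\ast\cdots\ast H_m=:H$ and that $H$ is quasi-convex in $G$. Since $H$ is a single quasi-convex $C$-special subgroup (Proposition \ref{freeprodspecial}), Theorem \ref{A1theorem} can be applied to the pairs $(a,H)$ for $a$ ranging over one finite set $A$ (double-coset representatives, which are shown to lie outside $H$, plus torsion representatives), and the direct product of the resulting homomorphisms makes $\phi(H)$ --- hence every $\phi(H_i)$ --- finite in every factor simultaneously, while each separation survives in its own coordinate. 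Without this free-product device, or some equivalent way of handling all the wall stabilisers in a single application of Theorem \ref{A1theorem}, your construction does not go through.
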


The proof will use the following variant of the ping-pong lemma. Some of the arguments will be closely related to the results in \cite{Gitik}.

\begin{lem}(Ping-pong Lemma)\label{pingpong}\\
	Let $H$ be a group that acts on a set $Y$. If $Y_1,...,Y_n\subset Y$ and $H_1,...,H_n<H$ and $y_0\in Y-\cup_iY_i$ are such that $hY_j\subset Y_i$ and $hy_0\in Y_i$ whenever $1\neq h\in H_i$ and $j\neq i$, then $H$ splits as a free product $H=H_1*...*H_n$.
\end{lem}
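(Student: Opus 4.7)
The plan is to run the standard ping-pong argument: I will show that the natural homomorphism $\Phi:H_1*\cdots*H_n\to H$ coming from the inclusions is injective. Combined with the implicit assumption that $H$ is generated by $H_1\cup\cdots\cup H_n$ (needed to interpret the conclusion literally), this yields the free product splitting.

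First, I would take a non-trivial element $w$ of $H_1*\cdots*H_n$ and write it in reduced form as
\[
w=h_{(1)}h_{(2)}\cdots h_{(k)},\qquad h_{(j)}\in H_{i_j}\setminus\{1\},\quad i_j\neq i_{j+1}.
\]
The objective is to show that $\Phi(w)\cdot y_0\neq y_0$ in $Y$, which forces $\Phi(w)\neq 1$ in $H$ and hence $\ker\Phi=1$.

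The main step is the inductive claim that $\Phi(w)\cdot y_0\in Y_{i_1}$ for every such reduced word of length $k\geq 1$. The base case $k=1$ is the hypothesis $h_{(1)}y_0\in Y_{i_1}$. For the inductive step, set $w'=h_{(2)}\cdots h_{(k)}$; by the inductive hypothesis $\Phi(w')y_0\in Y_{i_2}$. Since $i_1\neq i_2$ and $h_{(1)}\in H_{i_1}\setminus\{1\}$, the assumption $h_{(1)}Y_{i_2}\subset Y_{i_1}$ gives
\[
\Phi(w)y_0=h_{(1)}\bigl(\Phi(w')y_0\bigr)\in h_{(1)}Y_{i_2}\subset Y_{i_1},
\]
completing the induction. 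Since $y_0\notin\bigcup_iY_i$, in particular $y_0\notin Y_{i_1}$, so $\Phi(w)y_0\neq y_0$ as required.

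There is no real obstacle: the induction is entirely mechanical once the setup is in place. The only point that warrants comment is the conclusion itself; strictly, the argument produces injectivity of $\Phi$, and the statement ``$H=H_1*\cdots*H_n$'' should be read either with the tacit assumption that $H=\langle H_1,\ldots,H_n\rangle$, or as the assertion that the subgroup these subgroups generate inside $H$ is freely a free product of them. Either interpretation is immediate from the injectivity just established.
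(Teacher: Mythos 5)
Your proof is correct and is exactly the paper's argument (the paper compresses the induction into the single sentence that a reduced word maps $y_0$ into $Y_{m_1}$ and hence is not the identity). Your remark about needing $H=\langle H_1,\ldots,H_n\rangle$ to read the conclusion literally is also apt, since the lemma is only ever applied to $H:=\langle H_1,\ldots,H_m\rangle$.
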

\begin{proof}
	A product $h=h_1\cdots h_k$ with $1\neq h_i\in H_{m_i}$ and $m_i\neq m_{i+1}$ clearly maps $y_0$ into $Y_{m_1}$, and so is not the identity.
\end{proof}

\begin{breakproof}[Proof of Lemma \ref{mathcalX}]
As in fact (2) from earlier, let $p_i:X\to W_i$ be the closest point projection map to the wall $W_i$. As the $W_i$ are at least $3\delta$ apart from each other, fact (3) tells us that the images $p_j(W_i)$ for $i\neq j$ are all bounded. By Remark \ref{cocompactstabiliser} and induction on the lower dimensional cases of Theorem \ref{mainthm}, for each $i$ there exists $H_i<G_{W_i}$ finite index acting freely on $W_i$ with $W_i/H_i$ $C$-special. Define bounded subspaces
\begin{equation*}
A_i:=N_{14\delta+2R+1}(\cup_{j\neq i} p_i(W_j)).
\end{equation*}
Theorem \ref{subgroupsep} tells us in particular that $H_i$ is residually finite, so by replacing $H_i$ with a further finite index subgroup we can assume that $d(hA_i,A_i)>1$ for all $1\neq h\in H_i$. We can also assume that $H_i\triangleleft G_{W_i}$ by intersecting it with its finitely many conjugates, and $W_i/H_i$ will still be $C$-special by Proposition \ref{coverspecial}. Note that some $W_i$ might be finite and have $A_i=W_i$ - in these cases $H_i$ will be trivial.

Let $X_i:=p_i ^{-1}(W_i-A_i)$. Pick $x_0\in p_1(W_2)\subset A_1$ and note that $x_0$ is not in any of the $X_i$. The next part of the proof does ping-pong with $x_0, H_i$ and $X_i$ to prove that we get a free splitting $H:=\langle H_1,...,H_m\rangle\cong H_1\ast...\ast H_m$. By ignoring the $i$ for which $H_i$ is trivial we can assume that the sets $W_i-A_i$ and $X_i$ are non-empty.

\begin{claim}
$p_i(X_j)\subset A_i$ for $j\neq i$.
\end{claim}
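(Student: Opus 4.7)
The plan is to show that for $x \in X_j$, the point $p_i(x)$ is within $4\delta$ of $p_i(p_j(x))$, which belongs to $p_i(W_j) \subset A_i$ by definition of $A_i$. Set $y := p_j(x)$. Since $x \in X_j = p_j^{-1}(W_j - A_j)$, we have $y \in W_j - A_j$, and so (using $i \neq j$) $d(y, p_j(W_i)) > 14\delta + 2R + 1$. Let $L := d(p_i(x), p_i(y))$; showing $L \leq 4\delta$ suffices, since $4\delta < 14\delta + 2R + 1$.

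I would apply fact (1) to the geodesic quadrilateral with vertices $x, p_i(x), p_i(y), y$, whose side $[p_i(x), p_i(y)]$ lies in the convex wall $W_i$. The key input is a standard CAT(0) observation: any $u \in [x, p_i(x)]$ projects to $p_i(x)$ under $p_i$, because the triangle inequality forces $d(u, W_i) = d(u, p_i(x))$; symmetrically, $[y, p_i(y)]$ collapses to $p_i(y)$ under $p_i$, and $[x, y]$ collapses to $y$ under $p_j$.

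Suppose for contradiction that $L > 4\delta$, and let $w$ be the midpoint of $[p_i(x), p_i(y)]$. By fact (1) the point $w$ lies within $2\delta$ of one of the other three sides. If $w$ were $2\delta$-close to some point of $[x, p_i(x)]$, then 1-Lipschitzness of $p_i$ together with $p_i(w) = w$ would give $d(w, p_i(x)) \leq 2\delta$, contradicting $d(w, p_i(x)) = L/2 > 2\delta$; the symmetric argument rules out proximity to $[y, p_i(y)]$. Hence $w$ is within $2\delta$ of some $w'' \in [x, y]$. Then $p_j(w'') = y$ by the projection-collapse identity above, and 1-Lipschitzness of $p_j$ yields
\[
d(y, p_j(w)) = d(p_j(w''), p_j(w)) \leq d(w'', w) \leq 2\delta.
\]
Since $w \in W_i$, $p_j(w) \in p_j(W_i)$, so $d(y, p_j(W_i)) \leq 2\delta$, contradicting our earlier lower bound.

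The only non-routine ingredient is the CAT(0) projection-collapse identity, which is a standard consequence of the triangle inequality and the convexity of $W_i, W_j$; with it in hand the thin-quadrilateral chase is immediate. Note that the generous constant $14\delta + 2R + 1$ in $A_i$ is far larger than the $4\delta$ actually needed here — the extra slack is tuned for the ping-pong and the condition (2) of Lemma \ref{mathcalX}, not for this claim.
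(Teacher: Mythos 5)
Your proof is correct. It rests on the same basic mechanism as the paper's --- a thin geodesic polygon combined with the $1$-Lipschitz, ``collapsing'' behaviour of closest-point projections to the convex walls --- but the set-up is genuinely different and in some ways cleaner. The paper fixes an arbitrary $y\in p_j(W_i)$ and works with the geodesic \emph{pentagon} on $x,\,p_j(x),\,y,\,p_i(y),\,p_i(x)$ (one side lying inside $W_j$); it argues by contradiction, placing a point $z$ on $[p_i(x),p_i(y)]$ deep in the $14\delta$ buffer zone of $A_i$ and ruling out $3\delta$-proximity to each of the four remaining sides, one case of which uses the separation $d(W_i,W_j)>3\delta$ guaranteed by the choice of orbit representatives. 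You instead take $y=p_j(x)$, use the \emph{quadrilateral} on $x,\,p_i(x),\,p_i(y),\,y$, and prove the sharper quantitative statement $d(p_i(x),p_i(p_j(x)))\le 4\delta$, from which membership in $A_i$ is immediate since $4\delta<14\delta+2R+1$. Notably, your argument never invokes $d(W_i,W_j)>3\delta$: besides hyperbolicity and the projection-collapse identity (which is indeed the standard triangle-inequality consequence of uniqueness of projections in CAT(0) spaces), the only input is $d(p_j(x),p_j(W_i))>2\delta$, which you correctly extract from $x\in X_j$, i.e.\ $p_j(x)\notin A_j$. Each of your three cases is watertight, including the degenerate situations where the quadrilateral collapses, and the extra slack in the constant $14\delta+2R+1$ is, as you observe, reserved for the ping-pong and for condition (2) of Lemma \ref{mathcalX} rather than for this claim.
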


\begin{claimproof}
Let $x\in X_j$ and suppose for contradiction that $p_i(x)\notin A_i$.
 
We have the geodesic pentagon shown, where $y$ is any point in $p_j(W_i)$ and $z$ is a point on $[p_i(x),p_i(y)]\cap A_i$. We have defined $A_i$ to include a $14\delta$ buffer zone around $\cup_{j\neq i} p_i(W_j)$, and $p_i(x)\notin A_i$, therefore we can choose $z$ to satisfy $d(z,p_i(x)),d(z,p_i(W_j))>7\delta$. By fact (1) from earlier, $z$ is within $3\delta$ of one of the other sides of the pentagon, we now check each of these four sides in turn:
\begin{minipage}{.5\textwidth}

\begin{enumerate}[1.]
\item If $z'\in [x,p_j(x)]$ then $p_j(z)=p_j(x)\notin A_j$, and so $d(p_j(z),p_j(z'))\geq d(p_j(W_i),p_j(x))$, which is greater than $3\delta$ because of the buffer zone in $A_j$. But $p$ is distance non-increasing, so $d(z,z')>3\delta$.
\item $d(z, W_j)\geq d(W_i,W_j)>3\delta$ by choice of the walls $W_k$.
\item If $z'\in[y,p_i(y)]$ then
\begin{align*}
d(y,z)&\leq d(y,z')+d(z',z)\\
&=d(y,p_i(y))-d(p_i(y),z')+d(z',z)\\
&\leq d(y,p_i(y))-d(z,p_i(y))+2d(z,z')\\
&\leq d(y,p_i(y))-7\delta+2d(z,z')\\
&\leq d(y,z)-7\delta+2d(z,z'),
\end{align*}
(the last inequality by definition of $p_i$). This implies $d(z,z')>3\delta$.
\item The same argument as 3. shows that $z$ cannot be within $3\delta$ of $[x,p_i(x)]$.
\end{enumerate}
\end{minipage}
\begin{minipage}{.5\textwidth}
\centering
\includegraphics[width=.9\textwidth,clip=true]{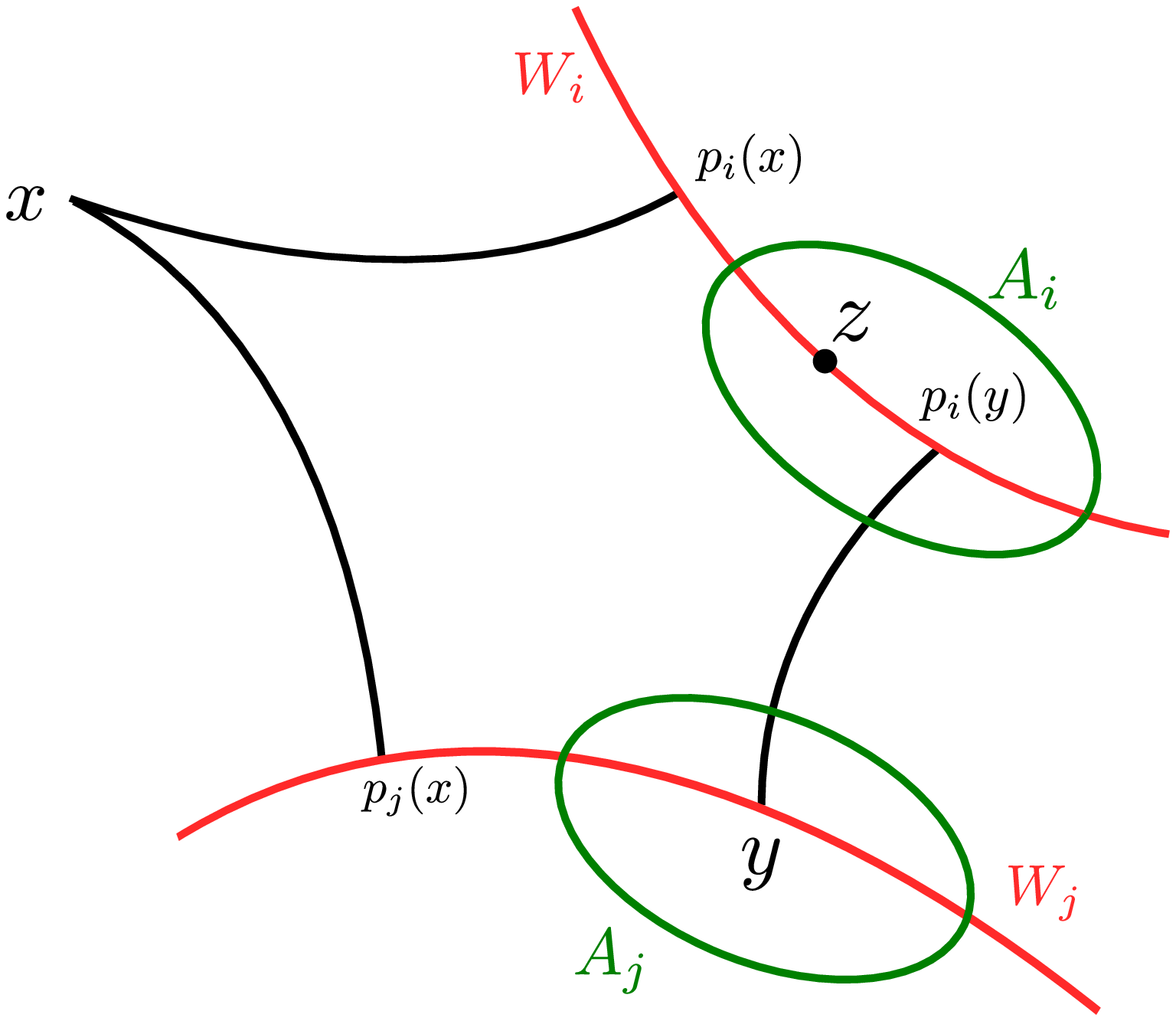}
\end{minipage}

We conclude that $z$ is not within $3\delta$ of one of the other sides of the pentagon, contradicting fact (1). The claim follows.
\end{claimproof}

\begin{claim}
For $j\neq i$ and $1\neq h\in H_i$ we have $hX_j\subset X_i$, $h W_j\subset X_i$ and $hx_0\in X_i$.
\end{claim}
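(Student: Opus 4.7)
The plan is to reduce all three inclusions to a single assertion: whenever $y \in X$ satisfies $p_i(y) \in A_i$ and $1 \neq h \in H_i$, we have $hy \in X_i$. Granting this, each of the three desired statements follows by verifying that the relevant points (points of $X_j$, points of $W_j$, and $x_0$) have $p_i$-image inside $A_i$.

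To prove the key assertion, note that $h \in H_i \subset G_{W_i}$ acts by an isometry of $X$ preserving the closed convex subspace $W_i$, so by fact (2) from earlier, $p_i$ commutes with $h$: $p_i(hy) = h\, p_i(y)$. Recall also that $H_i$ was chosen so that $d(hA_i, A_i) > 1$ for every $1 \neq h \in H_i$, and in particular $hA_i \cap A_i = \emptyset$. Combining these, $p_i(hy) = h\, p_i(y) \in hA_i \subset W_i \setminus A_i$, and thus $hy \in p_i^{-1}(W_i \setminus A_i) = X_i$.

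It remains to verify the hypothesis $p_i(y) \in A_i$ in the three cases. For $y \in X_j$ with $j \neq i$ this is exactly the content of Claim 1. For $y \in W_j$ with $j \neq i$, $p_i(y) \in p_i(W_j)$, and $p_i(W_j)$ is one of the sets in the union used to define $A_i = N_{14\delta + 2R + 1}(\bigcup_{k \neq i} p_i(W_k))$, so certainly $p_i(y) \in A_i$. Finally for $y = x_0$, recall that $x_0 \in p_1(W_2) \subset W_1$; if $i = 1$ then $p_i(x_0) = x_0 \in p_1(W_2) \subset A_1$, while if $i \neq 1$ then $p_i(x_0) \in p_i(W_1) \subset A_i$, again because $p_i(W_1)$ appears in the union defining $A_i$.

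The main thing to keep in mind is that the closest-point projection $p_i$ commutes with $h$ precisely because $h$ preserves $W_i$ setwise; once this equivariance is in hand, the proof is essentially bookkeeping through the definitions of $X_i$ and $A_i$.
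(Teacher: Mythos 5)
Your proof is correct and follows essentially the same route as the paper: use the equivariance $p_i(hy)=h\,p_i(y)$ (valid since $h$ preserves $W_i$), the choice $d(hA_i,A_i)>1$ to get $p_i(hy)\notin A_i$, and the first claim together with the definition of $A_i$ and of $x_0$ to check that the relevant projections land in $A_i$. Your case analysis for $p_i(x_0)\in A_i$ is in fact slightly more explicit than the paper's, which leaves that verification implicit.
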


\begin{claimproof}
Let $x\in X_j\cup W_j$. By the previous claim we have $p_i(x)\in A_i$, so $p_i(hx)=hp_i(x)\in hA_i$, hence $p_i(hx)\notin A_i$ and so $hx\in X_i$. Additionally, $p_i(hx_0)=hp_i(x_0)\in hA_i$ and so $hx_0\in X_i$.
\end{claimproof}

This last claim allows us to do ping-pong, as in Lemma \ref{pingpong}, to obtain the desired splitting $H\cong H_1\ast...\ast H_m$.

\begin{claim}
$H<G$ is quasi-convex.
\end{claim}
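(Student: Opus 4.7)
The plan is to show that for every $h \in H$, the geodesic $[x_0, hx_0]$ in $X$ lies in a uniformly bounded neighbourhood of the orbit $H\cdot x_0$; quasi-convexity of $H$ in $G$ then follows via \v{S}varc--Milnor. Before starting I would observe that each individual $H_i$ is itself quasi-convex in $G$: $H_i$ has finite index in $G_{W_i}$, which by Remark \ref{cocompactstabiliser} acts cocompactly on the convex subspace $W_i \subset X$, so $H_i \cdot x_0$ sits at bounded Hausdorff distance from $W_i$ and convexity of $W_i$ translates to quasi-convexity of $H_i$.

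For the main argument, write a reduced factorisation $h = h_1 \cdots h_k$ with $h_j \in H_{m_j}\setminus\{1\}$ and $m_j \neq m_{j+1}$, and set $y_j := h_1 \cdots h_j x_0$. Let $\sigma$ be the piecewise-geodesic path obtained by concatenating the CAT(0) geodesics $[y_{j-1}, y_j]$. The strategy is to show that $\sigma$ is a $(\lambda, \varepsilon)$-quasi-geodesic with constants depending only on $\delta$, $R$ and $\max_i \mathrm{diam}(A_i)$. Once this is done, stability of quasi-geodesics in the hyperbolic space $X$ gives a uniform $D$ with $[x_0, hx_0] \subset N_D(\sigma)$, while each segment $[y_{j-1}, y_j]$ already lies in a bounded neighbourhood of $h_1 \cdots h_{j-1} H_{m_j}\cdot x_0 \subset H\cdot x_0$ by quasi-convexity of $H_{m_j}$, delivering the desired uniform bound.

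The key geometric input is a refinement of the first Claim's pentagon calculation, obtained by translating it by the prefix $h_1\cdots h_{j-1}$: for every $j$, the projections onto the wall $W_j':= h_1 \cdots h_{j-1} W_{m_j}$ of $y_{j-1}$ and of every later vertex $y_\ell$ ($\ell \geq j$) lie respectively in the disjoint regions $h_1 \cdots h_{j-1} A_{m_j}$ and $h_1 \cdots h_j A_{m_j}$. Before starting the whole proof, I would replace each $H_i$ by a further finite-index subgroup (using residual finiteness of $H_i$, available via Theorem \ref{subgroupsep} just as in the construction of the $A_i$) so as to assume these two regions are at distance at least $8\delta + 2\max_i \mathrm{diam}(A_i)$ apart. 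Then the quantitative form of fact (3) embedded in the excerpt's proof implies that any geodesic whose endpoints project into these two regions comes within $2\delta$ of $W_j'$ in a transition segment close to $y_j$; applied to the tripod $y_{j-1}, y_j, y_{j+1}$ this yields a uniform bound on the distance from $[y_{j-1}, y_{j+1}]$ to $y_j$, which is the local quasi-geodesic condition at each breakpoint of $\sigma$.

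The main obstacle is the passage from these local estimates to a global quasi-geodesic bound uniform in $k$: the standard local-to-global criterion for quasi-geodesics in $\delta$-hyperbolic spaces (as in \cite{nonpos}) is invoked to assemble the breakpoint estimates into a single $(\lambda, \varepsilon)$-quasi-geodesic constant. This is really why the buffer $14\delta + 2R + 1$ in the definition of $A_i$ was set to comfortably exceed $2\delta$, and why the ping-pong bound $d(hA_i, A_i) > 1$ must be strengthened to something substantially larger than $\delta$ for quasi-convexity — a strengthening that was unnecessary for the free-product splitting but is essential here so that the projection argument cannot be short-circuited by a backtrack near any $y_j$.
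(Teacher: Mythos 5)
Your proposal is correct in outline, but it takes a genuinely different route from the paper. You run the classical Gitik-style argument (cf.\ \cite{Gitik}, which the paper cites as ``closely related''): make the broken geodesic through the orbit points $y_j=h_1\cdots h_jx_0$ a global quasi-geodesic by bounding the Gromov product at each breakpoint --- using that $y_{j-1}$ and $y_{j+1}$ project into the far-apart sets $h_1\cdots h_{j-1}A_{m_j}$ and $h_1\cdots h_jA_{m_j}$ of the wall $h_1\cdots h_{j-1}W_{m_j}$, while $y_j$ sits at bounded distance from the latter set --- and by deepening the $H_i$ so that the segments are long compared to these products; quasi-geodesic stability then transfers closeness to $H\cdot x_0$ from the broken path to $[x_0,hx_0]$. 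The paper instead builds an auxiliary path $\gamma$ through the projection points $q_i(x_0),q_i(hx_0)$ themselves: the pieces $[q_i(x_0),q_i(hx_0)]$ lie inside the walls $g_iW_{n_i}$ (hence near $H\cdot x_0$ by cocompactness of $H_{n_i}$ on $W_{n_i}$), the gaps between consecutive projection points are bounded by $\mathrm{diam}(\cup A_j)$, and a thin-quadrilateral argument together with surjectivity of the closest-point projection $\gamma\to[x_0,hx_0]$ gives a bounded Hausdorff distance between $\gamma$ and the geodesic --- with no appeal to the local-to-global theorem and, notably, no strengthening of the separation $d(hA_i,A_i)>1$. Your version is more modular and standard; the paper's avoids having to re-choose the $H_i$ and re-verify the rest of Lemma \ref{mathcalX} for the deeper subgroups (harmless, since every other condition survives passage to finite-index subgroups, but worth saying explicitly). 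Two cautions if you write yours out in full: the threshold you impose on $d(hA_i,A_i)$ must also absorb the offsets $d(x_0,W_i)$ (the breakpoints $y_j$ do not lie on the walls) and the $\delta$-losses in the broken-geodesic lemma, so it is safer to demand the separation be ``sufficiently large'' in terms of these fixed quantities than to commit to $8\delta+2\max_i\mathrm{diam}(A_i)$; and your closing attribution of the $14\delta+2R+1$ buffer to this step is off --- the $14\delta$ is consumed by the pentagon argument in the first Claim and the $2R+1$ by conclusion (2) of the lemma, neither of which has to do with quasi-convexity.
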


\begin{claimproof}
$G\to X, g\mapsto gx_0$ is a quasi-isometry, so it suffices to show that $H\cdot x_0$ is quasi-convex in $X$.

Let $h=h_1h_2...h_k$ with $1\neq h_i\in H_{n_i}$ and $n_i\neq n_{i+1}$. Put $g_i=h_1...h_i$ and $g_0=1$. Our strategy will be to show that all points on the geodesic $[x_0,hx_0]$ are close to one of the walls $g_i W_{n_i}$ for $1\leq i\leq k$. 

First we consider projections to such a wall. Let $q_i:X\to g_iW_{n_i}$ be the closest point projection map to $g_iW_{n_i}$. For $x\in X$, $q_i(x)$ is the closest point on $g_iW_{n_i}$ to $x$, left multiplying by $g_i^{-1}$ then tells us that $g_i^{-1}q_i(x)$ is the closest point on $W_{n_i}$ to $g_i^{-1}x$, so $g_i^{-1}q_i(x)=p_{n_i}(g_i^{-1}x)$. Therefore \begin{equation}\label{qpconj}
q_i=g_ip_{n_i}g_i^{-1}.
\end{equation}
We can then compute for $1\leq i<k$,
\begin{align}
q_i(h x_0)&=g_i p_{n_i}(g_i ^{-1} h x_0)\nonumber\\
&=g_i p_{n_i}(h_{i+1}...h_k x_0)\nonumber\\
&\in g_i p_{n_i}( X_{n_{i+1}})\hspace{2cm}&\text{by the second claim,}\nonumber\\
&\subset g_i A_{n_i}\hspace{2cm}&\text{by the first claim.}\label{projection1}
\end{align}
Similarly,
\begin{equation}\label{projection1'}
q_k(hx_0)=hp_{n_k}(x_0)\in h A_{n_k}.
\end{equation}
Next observe that $g_iW_{n_i}=g_{i-1}W_{n_i}$, and so analogously to (\ref{qpconj}) we have $q_i=g_{i-1}p_{n_i}g_{i-1}^{-1}$ ($1\leq i\leq k$). We then compute for $1<i\leq k$,
\begin{align}
q_i(x_0)&=g_{i-1} p_{n_i}(g_{i-1} ^{-1} x_0)\nonumber\\
&=g_{i-1} p_{n_i}(h^{-1}_{i-1}...h^{-1}_1 x_0)\nonumber\\
&\in g_{i-1} p_{n_i}( X_{n_{i-1}})\hspace{2cm}&\text{by the second claim,}\nonumber\\
&\subset g_{i-1} A_{n_i}\hspace{2cm}&\text{by the first claim.}\label{projection2}
\end{align}
And similarly
\begin{equation}\label{projection2'}
q_1(x_0)=p_{n_1}(x_0)\in A_{n_1}.
\end{equation}

We now consider the concatenation of geodesics joining the following points pairwise in order.
\begin{equation*}
x_0,q_1(x_0), q_1(hx_0),q_2(x_0), q_2(hx_0),...,q_k(x_0),q_k(hx_0), hx_0
\end{equation*}

Call this path $\gamma$, and refer to the above points as the vertices of $\gamma$. Recalling that $x_0\in A_1$, we can bound every other gap between consecutive vertices as follows.
\begin{equation*}
D:=\text{diam}(\cup A_j)\geq
\begin{cases}
d(x_0,q_1(x_0)),&\text{by (\ref{projection2'})}\\
d(q_i(hx_0),q_{i+1}(x_0)),&\text{for $1\leq i< k$, by (\ref{projection1}) and (\ref{projection2})}\\
d(q_k(hx_0), hx_0),&\text{by (\ref{projection1'})}
\end{cases}
\end{equation*}

The other gaps between consecutive vertices are spanned by segments $\gamma_i:=[q_i(x_0),q_i(hx_0)]\subset g_iW_{n_i}$. Since $H_{n_i}$ acts cocompactly on $W_{n_i}$ and $g_i\in H$, we deduce that each $\gamma_i$ is contained within $N_M(H\cdot x_0)$ for some constant $M$ that is independent of $h$. Hence $\gamma\subset N_{M+D}(H\cdot x_0)$.

To complete the proof of the claim it remains to show that $\sigma:=[x_0,hx_0]\subset N_L(\gamma)$ for some constant $L$ that is independent of $h$.

\begin{minipage}{.3\textwidth}
Consider $z\in \gamma_i$ at least $5\delta$ away from the endpoints of $\gamma_i$. By fact (1), $z$ is within $2\delta$ of one of the other sides of the geodesic quadrilateral shown, so it must be within $2\delta$ of $\sigma$ - otherwise it contradicts the definition of closest point projection. Therefore $\gamma_i\subset N_{7\delta}(\sigma)$ and $\gamma\subset N_{D+7\delta}(\sigma)$.
\end{minipage}
\begin{minipage}{.6\textwidth}
\centering
\includegraphics[width=.6\textwidth,clip=true]{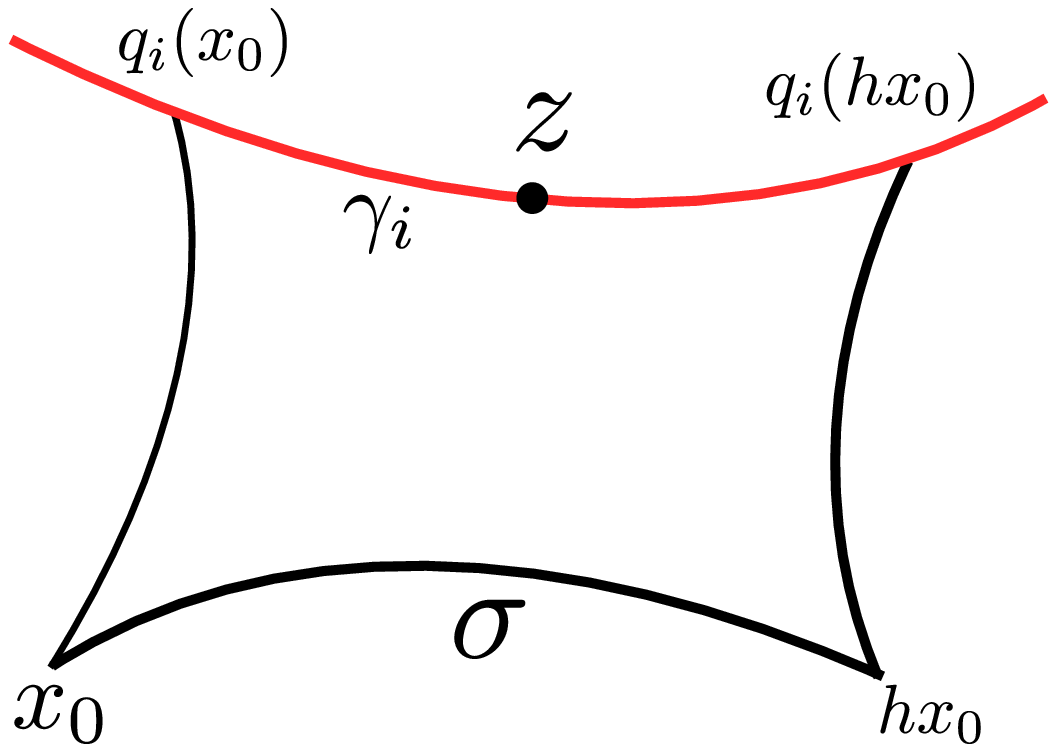}
\end{minipage}

Finally note that projection from $\gamma$ to $\sigma$ is continuous; and, as the paths share endpoints $x_0$ and $hx_0$, the image is the whole of $\sigma$. So in fact $\sigma\subset N_{D+7\delta}(\gamma)$.
\end{claimproof}

By Proposition \ref{freeprodspecial}, $H\cong H_1\ast...\ast H_m$ is the fundamental group of a $C$-special cube complex. By taking direct products of the homomorphisms in \ref{A1theorem}, we deduce that for any finite $A\subset G-H$ there is a quotient homomorphism $\phi:G\to\mathcal{G}$ such that $\phi(A)\cap\phi(H)=\emptyset$ and $\phi(H)$ is finite. We now show that conclusions (1)-(3) of the lemma can be satisfied by a certain choice of $A$.

\begin{enumerate}[(1)]
\item $\phi(H_i)<\phi(H)$ so must be finite.
\item For each $i$ the collection of double cosets
\begin{equation*}
\mathcal{A}_i:=\{H_i gH_i\,|\,g\in G,\,d(gW_i,W_i)\leq2R\}-\{H_i\}
\end{equation*}
is finite. To see this, fix $y\in W_i$ and consider $g\in G$ with $d(gW_i,W_i)\leq2R$ - say $x,x'\in W_i$ satisfy $d(gx',x)\leq2R$. Suppose that $Q>0$ with $W_i\subset H_i\cdot B_Q(y)$. Now pick $h,h'\in H_i$ so that $d(hx,y),d(h'y,x')<Q$; then $d(hgh'y,y)\leq d(hgh'y,hgx')+d(hgx',hx)+d(hx,y)<2Q+2R$. The finiteness of $\mathcal{A}_i$ then follows because $X$ is locally finite and the action of $G$ is proper.

We now claim that $g\in G- H_i$ with $d(gW_i,W_i)\leq 2R$ cannot have $g\in H$. Indeed, if $g=h_1 h_2...h_k$ with $h_j\in H_{n_j}$ and $n_j\neq n_{j+1}$, then by the second claim we see that $g W_i\subset X_{n_1}$ and $p_{n_1}(gW_i)\subset W_{n_1}-A_{n_1}$. Removing $h_1$ if necessary we can assume that $n_1\neq i$. But there exists $x\in g W_i$ with $d(x,W_i)\leq 2R$, and fact (2) from earlier implies that $d(p_{n_1}(x),p_{n_1}(W_i))\leq 2R$ and so $p_{n_1}(x)\in A_{n_1}$, a contradiction.

Given these two facts, we can ensure that $A$ contains representatives for all of the double cosets in the $\mathcal{A}_i$, and this ensures that $\phi(g)\notin\phi(H_i)$ for any $g\in G- H_i$ with $d(gW_i,W_i)\leq 2R$.
\item If $g\in G$ is a torsion element then by \cite[II.2.8]{nonpos} it has a fixed point $x\in X$. By assumption of the lemma, there exists $k\in G$ with $kx\in B_R(x_0)$. Then $d(kgk^{-1}x_0,x_0)<2R$. Therefore there is a finite set $\mathcal{T}$ of representatives for conjugacy classes of torsion elements in $G$. Each $H_i$ is torsion-free because it acts freely on $W_i$, so $H$ is also torsion-free and $H\cap\mathcal{T}=\emptyset$. Adding $\mathcal{T}$ to $A$ will ensure that $K$ is torsion-free.
\end{enumerate}

\end{breakproof}

The point of Lemma \ref{mathcalX} is that it allows us to define the following quotient complex.
\begin{defn}(Quotient Complex $\mathcal{X}$)\label{mathcalXdef}\\
As a result of Lemma \ref{mathcalX}, we can define the NPC cube complex $\mathcal{X} :=X/K$. The value of $R$ we use will be some constant large enough to satisfy the cocompactness condition of the lemma, and we also require $R\geq \delta+2\sqrt{\dim{X}}$ (this inequality will be demystified in section 8). The metric on $\mathcal{X}$ will be denoted $d$, the same as for $X$.
\end{defn}

As was the aim of this section, this quotient complex satisfies the following properties.
\begin{lem}(Properties of $\mathcal{X}$)\label{propmathcalX}
	\begin{enumerate}[(1)]
		\item There are natural cocompact actions of $G$ and $\mathcal{G}$ on $\mathcal{X}$.
		\item All walls of $\mathcal{X}$ are finite.
		\item For any wall $W$ in $X$, the $R$-neighbourhood $N_R(W)$ quotiented by $K\cap G_W$ embeds in $\mathcal{X}$. In particular this implies that all walls of $\mathcal{X}$ are embedded, and that distinct walls in $X$ which are less than $R$ apart map to distinct walls in $\mathcal{X}$.
	\end{enumerate}	
\end{lem}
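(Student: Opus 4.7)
My plan is to handle the three parts in order, with part (3) doing most of the work and the corollaries in the second sentence of (3) following immediately.

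For part (1), the point is essentially formal. Since $K = \ker\phi \triangleleft G$, the quotient $G/K \cong \mathcal{G}$ acts on $X/K = \mathcal{X}$, and $G$ acts on $\mathcal{X}$ through the quotient map $G \to G/K$. Cocompactness passes to the quotient because $G$ acts cocompactly on $X$.

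For part (2), I would observe that walls in $\mathcal{X}$ correspond to $K$-orbits of walls in $X$, with the image of a wall $W$ being $W/(K\cap G_W)$. Since every wall of $X$ is a $G$-translate of some $W_i$, and since $K$ is normal in $G$, the subgroups $K \cap G_{gW_i}$ and $K \cap G_{W_i}$ are conjugate in $G$, so it suffices to show each $W_i/(K\cap G_{W_i})$ is finite. By Lemma~\ref{mathcalX}(1), $\phi(H_i)$ is finite, so $K \cap H_i$ has finite index in $H_i$, which has finite index in $G_{W_i}$; thus $K \cap G_{W_i}$ has finite index in $G_{W_i}$. Combined with Remark~\ref{cocompactstabiliser} (cocompactness of $G_{W_i}$ on $W_i$) and Lemma~\ref{mathcalX}(3) (freeness of the $K$-action), this gives a free, cocompact action of $K \cap G_{W_i}$ on the locally finite cube complex $W_i$, hence a finite quotient.

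The main obstacle is part (3), for which the proof strategy is as follows. Suppose $x,y \in N_R(W)$ map to the same point of $\mathcal{X}$, so $y = kx$ for some $k \in K$; I want to conclude $k \in K \cap G_W$. First observe that $y \in N_R(W)$ gives $x = k^{-1}y \in N_R(k^{-1}W)$, while also $x \in N_R(W)$, so $d(W, k^{-1}W) \leq 2R$. Writing $W = gW_i$ and $g' := g^{-1}k^{-1}g$, we get $d(W_i, g'W_i) \leq 2R$. Since $K$ is normal in $G$ and $k \in K$, we have $g' \in K$, so $\phi(g') = 1 \in \phi(H_i)$. The contrapositive of Lemma~\ref{mathcalX}(2) then forces $g' \in H_i$, and since $H_i \triangleleft G_{W_i}$, this gives $g'W_i = W_i$, equivalently $kW = W$, as required. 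The hard part is simply spotting that this is the exact combination of all three conclusions of Lemma~\ref{mathcalX} (plus normality of $K$) the statement is engineered for.

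The two consequences follow quickly. Applying the above with $R$ replaced by $0$ (i.e.\ starting with $x,y \in W$ itself) shows $W/(K \cap G_W)$ embeds in $\mathcal{X}$, so walls of $\mathcal{X}$ are embedded. Finally, if two walls $W_1 \neq W_2$ in $X$ with $d(W_1,W_2) < R$ mapped to the same wall of $\mathcal{X}$, then $W_2 = kW_1$ for some $k \in K$ and $d(W_1, kW_1) < R \leq 2R$; the argument above would then give $kW_1 = W_1 = W_2$, a contradiction.
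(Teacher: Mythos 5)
Your proposal is correct and follows essentially the same route as the paper: part (1) from normality of $K$, part (2) from the finite index of $K\cap G_{W_i}$ in $G_{W_i}$ via Lemma~\ref{mathcalX}(1), and part (3) by conjugating to an orbit representative $W_i$ and applying the contrapositive of Lemma~\ref{mathcalX}(2). The only difference is that you spell out the deduction of the embedding from Lemma~\ref{mathcalX}(2) (the $y=kx$, $g'=g^{-1}k^{-1}g$ computation), which the paper leaves implicit.
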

\begin{proof}
	(1) holds because $K$ is normal in $G$ and $G$ acts cocompactly on $X$. By Lemma \ref{mathcalX}(1) we know that, for any $g\in G$, $K\cap gG_{W_i}g^{-1}$ has finite index in $gG_{W_i}g^{-1}=G_{gW_i}$ and so acts cocompactly on $gW_i$ - property (2) follows. Lemma \ref{mathcalX}(2) tells us that the $R$-neighbourhood $N_R(W_i)$ quotiented by $K\cap H_i=K\cap G_{W_i}$ embeds in $\mathcal{X}$ - property (3) follows by considering translates of the $W_i$ and conjugates of the $H_i$.
\end{proof}

To finish the section we introduce some notation.

\textbf{Notation:}\,
The quotient map $m:X\to\mathcal{X}$ will send a vertex $x$ (resp. an edge $e$ and wall $W$) to a vertex $\bar{x}$ (resp. an edge $\bar{e}$ and wall $\overline{W}$). And $\overline{W}(\bar{e})$ will denote the wall dual to $\bar{e}$. By Remark \ref{noswap} we know that walls in $\mathcal{X}$ are two-sided and that no element of $\mathcal{G}$ can exchange the sides of any wall. We can define $\overline{W}^\pm:=\overline{(W^\pm)}$.

\section{Invariant colouring measures}\label{sec:invariantcolouring}

This section is preparatory. More specifically, before we can start colouring walls in the next section, we need to establish some theory about colouring graphs. This section is essentially the same as \S5 of Agol's paper, but with a little more detail regarding weak$^*$ convergence.

\begin{defn} (Colourings)\\\label{colourings}
An \textit{$n$-colouring} of a graph $\Gamma$ is a map $c:V(\Gamma)\to [n]:=\{1,...,n\}$ such that $c(v_1)\neq c(v_2)$ whenever $\{v_1,v_2\}\in E(\Gamma)$. Let $C_n(\Gamma)$ denote the set of $n$-colourings. If vertex degrees are bounded by $k$ then it is clear that $C_{k+1}(\Gamma)\neq\emptyset$.

Suppose a group $H$ acts on $\Gamma$. Then we have an action of $H$ on $C_n(\Gamma)$ by $h: c\mapsto c\circ h^{-1}$.
\end{defn}

\begin{defn}(Colourings as a measurable space)\\
Consider $C_n(\Gamma)$ as a closed subspace of $[n]^{V(\Gamma)}$ with the product topology. We will consider $[n]^{V(\Gamma)}$ as a measurable space with $\sigma$-algebra generated by the sets $A_{v,j}:=\{f\in [n]^{V(\Gamma)}|\,f(v)=j\}$ for $v\in V(\Gamma)$ and $j\in[n]$ - note that if $V(\Gamma)$ is countable then this is also the $\sigma$-algebra generated by the open subsets of $[n]^{V(\Gamma)}$. The action of $H$ on $C_n(\Gamma)$ extends naturally to $[n]^{V(\Gamma)}$ by $h:f\mapsto f\circ h^{-1}$. These are measurable functions so the family of measurable subsets of $[n]^{V(\Gamma)}$ is $H$-invariant. 
\end{defn}

\begin{thm}\label{measure} Suppose a group $H$ acts cocompactly on a countable graph $\Gamma$ with vertex degrees bounded by $k$. Then there exists an $H$-invariant probability measure $\mu$ on $C_{k+1}(\Gamma)$.
\end{thm}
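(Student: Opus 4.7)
The plan is to realise $\mu$ as a weak$^*$ accumulation point in the space of Borel probability measures on $C_{k+1}(\Gamma)$, taking uniform distributions on colourings of finite exhausting subgraphs as the approximations. The delicate step will be ensuring $H$-invariance of the weak$^*$ limit by leveraging cocompactness of the action.

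First I would set up the ambient compactness. Since $V(\Gamma)$ is countable and $[k+1]$ is finite, $[k+1]^{V(\Gamma)}$ is compact and metrizable in the product topology by Tychonoff, and $C_{k+1}(\Gamma)$ is closed in it (its defining conditions $f(u) \neq f(v)$ on adjacent pairs are closed and local), hence itself compact metrizable. Non-emptiness follows by exhausting $\Gamma$ by finite subgraphs $\Gamma_1 \subseteq \Gamma_2 \subseteq \cdots$: each admits a proper $(k+1)$-colouring by a greedy argument (maximum degree $\leq k$), and the inverse system with restriction maps has non-empty limit. Consequently $P(C_{k+1}(\Gamma))$ is convex, weak$^*$-compact (Banach--Alaoglu), and metrizable.

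Next I would construct approximating measures. Fix a basepoint $v_0 \in V(\Gamma)$, let $B_n$ be the combinatorial $n$-ball around $v_0$ (a finite subgraph by local finiteness), and let $\nu_n$ be the uniform probability measure on the finite non-empty set $C_{k+1}(B_n)$. Pushing $\nu_n$ forward by any choice of extension $C_{k+1}(B_n) \to C_{k+1}(\Gamma)$ produces $\mu_n \in P(C_{k+1}(\Gamma))$, and by weak$^*$ compactness we pass to a subsequence $\mu_{n_i}$ converging to some $\mu \in P(C_{k+1}(\Gamma))$.

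The hard part is verifying $H$-invariance of $\mu$. By Stone--Weierstrass, continuous functions on $C_{k+1}(\Gamma)$ are uniformly approximated by those depending on finitely many coordinates, so it suffices to check the identity $\mu([c|_S = \sigma]) = \mu([c|_{hS} = \sigma \circ h^{-1}])$ on cylinder sets, for each finite $S \subseteq V(\Gamma)$, each colouring $\sigma : S \to [k+1]$, and each $h \in H$. For $n$ large, both $S$ and $hS$ sit inside $B_n$, and the two sides reduce to comparing marginals of $\nu_n$ at translate-related positions. Cocompactness provides a finite fundamental domain and a uniform local structure for $\Gamma$, which I would exploit by either averaging $\mu_n$ over a growing sequence of finite subsets of $H$ so that translation errors vanish in the weak$^*$ limit, or by constructing $\mu$ directly from $H$-invariant marginal specifications on cylinders and assembling these into a measure via the Kolmogorov extension theorem. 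Making the boundary error die correctly in the weak$^*$ topology is where the extra care about weak$^*$ convergence mentioned in the section preamble is needed.
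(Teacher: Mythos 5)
Your construction breaks down at exactly the point you flag as "the hard part": $H$-invariance of the weak$^*$ limit. The uniform measures on $C_{k+1}(B_n)$ carry no equivariance whatsoever, and neither of your two proposed repairs closes the gap. Averaging over a growing sequence of finite subsets of $H$ so that "translation errors vanish" is a F\o lner-type argument, which requires $H$ to be amenable; here $H$ will be (a quotient of) a hyperbolic group acting cocompactly on $\Gamma$, and for a non-amenable group no such sequence of finite subsets with asymptotically invariant averages exists, so the translation error does not die. The alternative --- building $\mu$ from "$H$-invariant marginal specifications on cylinders" and invoking Kolmogorov extension --- is circular: producing a consistent $H$-invariant family of marginals supported on proper colourings is essentially equivalent to the theorem you are trying to prove, and you give no construction of such a family. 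Cocompactness alone gives a finite fundamental domain but does not make the uniform-on-balls measures asymptotically invariant.

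The paper resolves this by arranging that every approximating measure is already $H$-invariant, and instead lets the \emph{support condition} be what is only achieved in the limit. One starts with the product of uniform measures on $[n]^{V(\Gamma)}$ for large $n$ (a measure on \emph{all} functions, not just colourings, but manifestly $H$-invariant since $H$ permutes the coordinates); the probability that a fixed edge is monochromatic is $1/n$. An $H$-equivariant, continuous recolouring map $p_n:[n]^{V(\Gamma)}\to[n-1]^{V(\Gamma)}$ (replace colour $n$ at a vertex by the least colour absent from its neighbours, which exists since degrees are at most $k$) never creates new monochromatic edges, so pushing forward down to $[k+1]^{V(\Gamma)}$ keeps the total "weight" $\sum_i\nu(B_{e_i})$ over orbit representatives of edges at most $m/n$. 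A weak$^*$ limit of these pushforwards as $n\to\infty$ lies in the (closed) set of $H$-invariant measures and has weight zero, hence is supported on $C_{k+1}(\Gamma)$. Your framework of weak$^*$ compactness on $[k+1]^{V(\Gamma)}$ is the right ambient setting, but without an equivariant mechanism like this recolouring cascade the invariance claim is unproven.
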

\begin{proof}
Let $M_H(n)$ denote the set of $H$-invariant probability measures on $[n]^{V(\Gamma)}$. Note that $M_H(n)$ is non-empty because it contains the measure $\mu_n$ which is the product of uniform measures on $[n]$. We have that $C_{k+1}(\Gamma)\subset[k+1]^{V(\Gamma)}$, so our task is to find $\mu\in M_H(k+1)$ with $\mu(C_{k+1}(\Gamma))=1$ (thus making $\mu$ an $H$-invariant probability measure on $C_{k+1}(\Gamma)$).

For an edge $e=\{v_1,v_2\}\in E(\Gamma)$ let $B_e(n):=\{f\in[n]^{V(\Gamma)}|\,f(v_1)=f(v_2)\}$; it is clear that $[n]^{V(\Gamma)}$ splits as a disjoint union $[n]^{V(\Gamma)}=C_n(\Gamma)\sqcup\bigcup_{e\in E(\Gamma)}B_e(n)$, so our task reduces to finding $\mu\in M_H(k+1)$ with $\mu(B_e(k+1))=0$ for all $e\in E(\Gamma)$. Let $\{e_1,...,e_m\}\subset E(\Gamma)$ be a complete set of orbit representatives for the action of $H$ on $E(\Gamma)$; since $\nu(B_{he}(k+1))=\nu(hB_e(k+1))=\nu(B_e(k+1))$ for all $\nu\in M_H(k+1)$, it suffices to find $\mu\in M_H(k+1)$ with $\mu(B_{e_i}(k+1))=0$ for $i=1,...,m$.

For $\nu\in M_H(n)$ define $weight(\nu):=\sum_i \nu(B_{e_i}(n))$. We will use a limiting argument to construct $\mu\in M_H(k+1)$ with zero weight. As mentioned above, there is a $H$-invariant probability measure $\mu_n$ on $[n]^{V(\Gamma)}$ which is the product of uniform measures on $[n]$. Note that $\mu_n$ is the unique measure with $\mu_n(A_{v,j})=1/n$ for every $A_{v,j}$. $\mu_n(B_e(n))=\mu_n(\cup_j (A_{v,j}\cap A_{w,j}))=1/n$ for $e=\{v,w\}\in E(\Gamma)$, so $weight(\mu_n)=m/n$.

For $n>k+1$ define a map $p_n:[n]^{V(\Gamma)}\to[n-1]^{V(\Gamma)}$ by
\begin{equation*}
p_n(c)(v):=
\begin{cases}
c(v),&c(v)<n\\
\text{min}([n-1]-\{c(u)|\,\{u,v\}\in E(\Gamma)\}),&c(v)=n
\end{cases}
\end{equation*}
for $c\in[n]^{V(\Gamma)}$ and $v\in V(\Gamma)$. In other words $p_n$ changes the colour of each vertex coloured $n$ to the smallest colour not used by its neighbours, and leaves other vertices with the same colour. This is well-defined because vertex degrees are at most $k$. It is clear that $p_n$ is $H$-equivariant and continuous, so it induces a well-defined push-forward $p_{n\ast}:M_H(n)\to M_H(n-1)$ given by $p_{n\ast}(\nu)(A)=\nu(p_n^{-1}(A))$ for $\nu\in M_H(n)$ and $A\subset [n-1]^{V(\Gamma)}$ measurable. Furthermore, for $\{v_1,v_2\}\in E(\Gamma)$ and $c\in [n]^{V(\Gamma)}$, if $p_n(c)(v_1)=p_n(c)(v_2)$ then $c(v_1)=c(v_2)$. Therefore $p_n^{-1}(B_e(n-1))\subset B_e(n)$ for any edge $e$; consequently $weight(p_{n\ast}(\nu))\leq weight(\nu)$ for any $\nu\in M_H (n)$.

Now define $P_{n\ast}=p_{k+2\ast}\circ p_{k+3\ast}\circ\cdots\circ p_{n\ast}:M_H(n)\to M_H(k+1)$. We will then have that $weight(P_{n\ast}(\mu_n))\leq weight(\mu_n)=m/n\to0$ as $n\to\infty$.

 By Prokhorov's Theorem \cite{weakstar} the set of all probability measures on $[k+1]^{V(\Gamma)}$ is compact metrizable in the weak* topology (measures $(\nu_n)$ converge to $\nu$ in the weak* topology if and only if $\int\alpha\, d\nu_n\to\int\alpha\, d\nu$ for every continuous map $\alpha:[k+1]^{V(\Gamma)}\to\mathbb{R}$). $M_H(k+1)$ is a closed subspace with respect to this topology; to see this suppose a sequence $(\nu_n)$ in $M_H(k+1)$ converges to a measure $\nu$. Let $\Sigma$ be the algebra generated by the $A_{v,j}$ (the smallest family containing the $A_{v,j}$ that is closed under finite union and complementation), all sets in $\Sigma$ are clopen in $[k+1]^{V(\Gamma)}$ so by considering characteristic functions we have $\nu_n(A)\to\nu(A)$ for every $A\in\Sigma$. For $h\in H$ define $\nu_h$ by $\nu_h(A)=\nu(hA)$ for measurable sets $A$; for $A\in\Sigma$ we have $\nu_h(A)=\lim_{n\to\infty}\nu_n(hA)=\lim_{n\to\infty}\nu_n(A)=\nu(A)$, so by Caratheodory's Extension theorem we get $\nu=\nu_h$, thus $\nu\in M_H(k+1)$.
 
We conclude that $P_{n\ast}(\mu_n)$ has a convergent subsequence converging to some $\mu\in M_H(k+1)$. The $weight$ is continuous with respect to the weak* topology because $B_{e_i}(k+1)\in\Sigma$, thus $weight(\mu)=0$ as required.
\end{proof}

\begin{remk}
When we apply this theorem later in the paper it would be enough for $\mu$ to only be defined on the algebra $\{C_{k+1}(\Gamma)\cap A|\,A\in\Sigma\}$, where $\Sigma$ is the algebra generated by the $A_{v,j}$, rather than having $\mu$ defined on all measurable subsets of $C_{k+1}(\Gamma)$. If we had modified the theorem to only require this, then the last part of the proof that argues about convergence would be easier because we wouldn't need weak* convergence or Caratheodory's Extension theorem (the reason is basically that $\Sigma$ is countable). However keeping the theorem as it is makes for a cleaner statement.
\end{remk}

\section{Colouring walls}\label{sec:colouringwalls}

In this section we define the local colouring data that will play a key role in the following sections. This local colouring data will be defined as equivalence classes on the space of all colourings of walls in $\mathcal{X}$; first we define this space of colourings by building a graph out of the walls of $\mathcal{X}$. This section follows Definitions 6.6-6.8 of Agol's paper, but with a simplification implied by Agol's ICM notes \cite[7.4]{icm} and with some differences in notation.

\begin{defn}(The graph $\Gamma$)\\\label{gamma}
Let $\Gamma=\Gamma(\mathcal{X})$ be a graph whose vertices are the walls of $\mathcal{X}$, and with walls $\overline{W}_1, \overline{W}_2$ joined by an edge in $\Gamma$ if $d(\overline{W}_1,\overline{W}_2)\leq R$. We have a natural action of $\mathcal{G}$ on $\Gamma$. As $\mathcal{X}$ is locally finite, cocompact and with finite walls (see Lemma \ref{propmathcalX}) it follows that the degree of vertices in $\Gamma$ is bounded by some $k\in\mathbb{N}$.

As in Definition \ref{colourings}, we have an action of $\mathcal{G}$ on $C_{k+1}(\Gamma)$ by $c\mapsto gc:=c\circ g^{-1}$ - this also induces an action of $G$ on $C_{k+1}(\Gamma)$ by $c\mapsto gc:=c\circ \phi(g^{-1})$.
\end{defn}

\begin{defn}(Equivalent colourings)\\\label{equivcolourings}
For $W$ a wall in $X$, we will define equivalence classes $[-]_W$ in $C_{k+1}(\Gamma)$ that depend only on the colour of vertices `near' to $\overline{W}$ in $\Gamma$. What we mean by vertices `near' to $\overline{W}$ will depend on the colouring $c\in C_{k+1}(\Gamma)$ in question. Specifically, define the equivalence class $[c]_W$ by 
\begin{equation*}
[c]_W:=\{c'\in C_{k+1}(\Gamma)\,|\,c'=c\text{ on the ball of radius $c(\overline{W})$ in $\Gamma$ centred at $\overline{W}$}\}.
\end{equation*}
For $e\in E(X)$ that crosses a wall $W$, we will use $[-]_e$ as an alternative notation for the equivalence class $[-]_W$. We will also use the notation $c(e):=c(\overline{W})$. Note that $c'\in[c]_e$ implies $c'(e)=c(e)$. For $x\in V(X)$ we will require the finer equivalence classes $[-]_x$ in $C_{k+1}(\Gamma)$ defined by 
\begin{equation*}
[c]_x:=\cap\{[c]_e\,|\,e\in E(X)\text{ incident at }x\}.
\end{equation*} 
We want to combine these families of equivalence relations into two $G$-invariant equivalence relations, one for edges and one for vertices. Do this as follows. Define equivalence classes on $E(X)\times C_{k+1}(\Gamma)$ by $[e,c]:=\{e\}\times [c]_e$. Similarly, define equivalence classes on $V(X)\times C_{k+1}(\Gamma)$ by $[x,c]:=\{x\}\times[c]_x$. The action of $G$ on $E(X), V(X)$ and $C_{k+1}(\Gamma)$ induces actions on the product spaces and on the two sets of equivalence classes by $g[e,c]=[ge,gc]$ and $g[x,c]=[gx,gc]$. It is straightforward to check that these are well-defined (first check that $g[c]_W=[gc]_{gW}$).
\end{defn}

\begin{remk}\label{finiteequiv}
For each $e\in E(X)$, the classes $[-]_e$ only depend on the colour of vertices in some $(k+1)$-ball of $\Gamma$, and so there are only finitely many of these equivalence classes. Similarly, for each $v\in V(X)$, there are only finitely many classes $[-]_v$. As there are finitely many $G$-orbits in  $E(X)$ and $V(X)$, there must only be finitely many $G$-orbits of equivalence classes on $E(X)\times C_{k+1}(\Gamma)$ and $V(X)\times C_{k+1}(\Gamma)$.
\end{remk}

\begin{remk}\label{nbhedges}
	If edges $e,f\in E(X)$ are both incident at a vertex $x$, then $d(\overline{W(e)},\overline{W(f)})\leq1<R$, so $c(e)=c(\overline{W(e)})\neq c(\overline{W(f)})=c(f)$ for any $c\in C_{k+1}(\Gamma)$.
\end{remk}

\section{Starting the gluing construction}\label{sec:startgluing}

In this section we introduce the main construction used to prove Theorem \ref{mainthm}. We will implement the first step of the construction and show how the theorem follows from the final step. The process of going between steps will be left to the last two sections. The construction is similar in spirit to \S8 of Agol's paper, but we work with subspaces of $X$ rather than orbi-complexes; in other words we work on the space where $G$ acts rather than in the quotient. This will allow us to make use of the CAT(0) geometry of $X$, as we do in sections 8 and 9, and it will save us from having to recall technical background about orbi-complexes. Lemma \ref{Vk+1} is based on the ideas of \S7 of Agol's paper.

To prove Theorem \ref{mainthm} we will inductively construct $\mathcal{V}_{k+1}, \mathcal{V}_k,..., \mathcal{V}_0$ (with the $k$ from Definition \ref{gamma}). Each $\mathcal{V}_j$ will be a non-empty collection of triples $(Z,H,(c_x))$ where $Z\subset X$ is a non-empty intersection of half-spaces (so is closed and convex), and for each $x\in Z$ a vertex we have $c_x\in C_{k+1}(\Gamma)$ a colouring.  $H$ will be a subgroup of $G$ that acts on $Z$ freely and cocompactly, and $c_{hx}=hc_x$ for $h\in H$. We will permit $\mathcal{V}_j$ to contain duplicates of some triples. Where there is no danger of ambiguity, we will write $Z\in\mathcal{V}_j$ as shorthand for $(Z,H,(c_x))\in\mathcal{V}_j$.

$\mathcal{V}_j$ will satisfy four properties; before stating these formally we give some loose motivation for them. We will often work with the finite complex $Z/H$ which has universal cover $Z$ (as $Z$ is CAT(0)). Technically $Z$ and $Z/H$ are not quite cube complexes, rather they are something like `cube complexes with boundary walls' - we'll just have to live with this, but we will get a genuine cube complex once we've finished all the gluing. Think of the colourings $c_x$ as giving information about some of the walls nearby $x$, such as which walls mark the boundary of $Z$. The rough idea of the construction is to glue the complexes $Z/H$ along walls coloured $j$ to form $\mathcal{V}_{j-1}$. Neighbouring vertices in $Z$ must agree about colours of nearby walls, and so vertices next to boundary walls that will later be glued up must have a potential matching in which colourings are compatible.

Here are the properties that $\mathcal{V}_j$ must satisfy (which use notation from Definitions \ref{equivcolourings} and \ref{QVH}):

\begin{enumerate}[(1)]
\item If $e\in E(X)$ joins vertices $x,y\in Z\in\mathcal{V}_j$, then $[e,c_x]=[e,c_y]$. So adjacent vertices are equipped with similar colourings.
\item If $e\in E(X)$ joins vertices $x, y$ with $x\in Z\in\mathcal{V}_j$, then
\begin{equation*}
y\in Z \,\Leftrightarrow\, c_x(e)>j.
\end{equation*}
So walls in the interior of $Z$ are coloured $>j$ by their neighbouring vertices, whereas walls on the boundary of $Z$ are coloured $\leq j$.
\item (Gluing Equations)

Let $f\in E(X)$ and $c\in C_{k+1}(\Gamma)$. Define sets
\begin{equation*}
\mathcal{V}_j^{\pm}(f,c):=\{(H\cdot e, Z)\,|\,(Z,H,(c_x))\in\mathcal{V}_j,\,\exists g\in G:\, gZ\cap W(f)^\pm\neq\emptyset,\,g[e,c_e]=[f,c]\},
\end{equation*}
where any duplicates of triples $(Z,H,(c_x))\in \mathcal{V}_j$ are counted separately. In other words, $\mathcal{V}_j^{\pm}(f,c)$ is the set of edges in the complexes $Z/H$ that, modulo the action of $G$, correspond to $f$ and have colouring in the class $[c]_f$; and the $\pm$ means that $Z/H$ continues on the $\pm$ side of the wall dual to the edge. 
 
The Gluing Equations are given by
\begin{equation*}
|\mathcal{V}_j^+(f,c)|=|\mathcal{V}_j^-(f,c)|,
\end{equation*}  
where $f$ ranges over $E(X)$ and $c$ ranges over $C_{k+1}(\Gamma)$. Roughly speaking, these equations will ensure that walls on the boundary of complexes $Z/H$ that look like $W(f)$ on the $W(f)^+$ side can be matched up with those that look like $W(f)$ on the $W(f)^-$ side, with compatible colourings matched together - but there is more work to be done later to arrange this precisely.
\item $H\in\mathcal{QVH}$ for any triple $(Z,H,(c_x))\in\mathcal{V}_j$. This will allow us to make use of the theorems in Section \ref{sec:special}.
\end{enumerate}
It will be convenient to also define colourings $c_e\in C_{k+1}(\Gamma)$ for edges $e\in E(X)$ that intersect $Z$, such that if $e$ is incident at a vertex $x\in Z$ then $[e,c_e]=[e,c_x]$. This is possible by property (1). We will only ever care about the class $[e,c_e]$, so the colourings $(c_e)$ are not extra data that needs to be added to the triple $(Z,H,(c_x))$.

To start the inductive construction of the $\mathcal{V}_j$, we first define $\mathcal{V}_{k+1}$.

\begin{lem}\label{Vk+1}
	There exists $\mathcal{V}_{k+1}$ satisfying all of the above conditions.
\end{lem}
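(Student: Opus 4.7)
The plan is to populate $\mathcal{V}_{k+1}$ with single-vertex pieces weighted by an invariant colouring measure on $\Gamma$. First I would apply Theorem~\ref{measure} to the countable graph $\Gamma$, which has vertex degree bounded by $k$ and on which $G$ acts via $\phi:G\to\mathcal{G}$, to obtain a $G$-invariant probability measure $\mu$ on $C_{k+1}(\Gamma)$. Next I would pick orbit representatives $x_1,\dots,x_n$ for the action of $G$ on $V(X)$, and for each $x_i$ set
\[
Z_i\ :=\ \bigcap_{e\ni x_i} H_e,
\]
where $H_e$ is the closed half-space bounded by $W(e)$ containing $x_i$. Then $Z_i$ is a compact intersection of half-spaces whose only vertex is $x_i$. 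Taking $H=\{1\}$ makes the trivial group act freely and cocompactly on $Z_i$, so property~(4) is automatic via item~1 of Definition~\ref{QVH}, property~(1) is vacuous because $Z_i$ has a single vertex, and property~(2) is automatic because every edge $e$ at $x_i$ has its other endpoint outside $Z_i$ and $c(e)\in[k+1]$ is never greater than $k+1$, so both sides of the biconditional are false.

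For the colouring data, Remark~\ref{finiteequiv} tells us that $C_{k+1}(\Gamma)$ partitions into finitely many $\sim_{x_i}$-classes. For each such class $[c]_{x_i}$ I would pick a representative $c$ and include in $\mathcal{V}_{k+1}$ a weight
\[
\frac{\mu([c]_{x_i})}{|G_{x_i}|}
\]
of the triple $(Z_i,\{1\},c)$. Since $\mu$ need not be rational I would treat $\mathcal{V}_{k+1}$ as a formal $\mathbb{R}_{\geq0}$-weighted multiset at this stage, which is harmless because the Gluing Equations are $\mathbb{R}$-linear; later sections can pass to rational approximations or finite covers wherever integer multiplicities are genuinely needed.

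The substantive verification is property~(3). Fix $f\in E(X)$ with endpoints $y^\pm\in W(f)^\pm$ and a colouring $c\in C_{k+1}(\Gamma)$, and let $x_p$ be the orbit representative of $y^+$. The condition $gZ_i\cap W(f)^+\neq\emptyset$ forces $gx_i=y^+$, so only $Z_p$-triples contribute to $|\mathcal{V}_{k+1}^+(f,c)|$. Fix $g_0\in G$ with $g_0x_p=y^+$; the relevant $g$'s form the coset $g_0G_{x_p}$, and as $g$ ranges over this coset the edge $e=g^{-1}f$ sweeps out all edges at $x_p$ lying in the $G$-orbit of $(f,y^+)$. For each valid pair $(e,g)$ the colouring condition $g[e,c_e]=[f,c]$ cuts out the $\sim_e$-class $g^{-1}[c]_f$; since $\sim_{x_p}$ refines $\sim_e$, the total weight of contributing triples is $\mu(g^{-1}[c]_f)/|G_{x_p}|=\mu([c]_f)/|G_{x_p}|$ by $G$-invariance of $\mu$. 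Summing over the $|G_{x_p}|$ elements of $g_0G_{x_p}$ cancels the $|G_{x_p}|^{-1}$ in the weighting and produces $|\mathcal{V}_{k+1}^+(f,c)|=\mu([c]_f)$. An identical computation on the other side, centred at $y^-$, yields the same value, so the Gluing Equations hold.

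The hard part will be the stabiliser bookkeeping in~(3): the weighting factor $|G_{x_i}|^{-1}$ is chosen precisely so that summing over the coset cancels, making the total count depend only on the $G$-orbit data $(f,[c]_f)$ and not on which orbit representative we chose. A secondary subtlety is that two different $g\in g_0G_{x_p}$ may yield the same edge $e$ when $\operatorname{stab}(e)\cap G_{x_p}$ is nontrivial; one must check that the resulting classes $g^{-1}[c]_f$ still contribute compatibly, but since $\mu$ is also invariant under this stabiliser the same cancellation goes through.
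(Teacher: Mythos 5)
Your construction is essentially the paper's: single-vertex pieces $Z_i$ with trivial group, one copy per $[-]_{x_i}$-class, weighted by $\mu([c]_{x_i})/|\mathrm{Stab}_G(x_i)|$, and the verification that the weighted count of $\mathcal{V}_{k+1}^{\pm}(f,c)$ depends only on $(f,[c]_f)$ and not on $\pm$. Two remarks on the details. First, your headline identity $|\mathcal{V}_{k+1}^+(f,c)|=\mu([c]_f)$ is off by the factor $|\mathrm{Stab}_G([f,c])|$: summing $\mu(g^{-1}[c]_f)/|G_{x_p}|$ over the coset $g_0G_{x_p}$ counts each contributing pair $(e,Z_{pl})$ once for every $g$ with $g[e,c_{pl}]=[f,c]$, i.e.\ $|\mathrm{Stab}_G([f,c])|$ times. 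You flag this at the end, and the factor is indeed symmetric in $\pm$ so the Gluing Equations survive; the paper makes this clean by first proving $M^{\pm}(f,c)=|\mathrm{Stab}_G([f,c])|\,|\mathcal{V}_{k+1}^{\pm}(f,c)|$ and then solving $M^+=M^-$.

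The genuine gap is the deferral of integrality. $\mathcal{V}_{k+1}$ must be an honest finite multiset of triples with non-negative integer multiplicities: Sections 8--9 glue the actual complexes $Z/H$, count and match portal quotients in pairs, and take finite covers of genuine cube complexes --- none of which makes sense for a formal $\mathbb{R}_{\geq0}$-weighted collection, and none of those later arguments performs a rational approximation for you. The conversion has to happen inside this lemma, and it does require an argument: one observes (via Remark~\ref{finiteequiv}) that there are only finitely many Gluing Equations up to the $G$-action, that they are integer linear equations in the multiplicities $\alpha_{il}$, and that a nonzero non-negative real solution of such a system can be perturbed to a nonzero non-negative rational (hence, after clearing denominators, integer) solution because $\ker(A)\cap\mathbb{Q}^n$ is dense in $\ker(A)$ for an integer matrix $A$. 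One also needs $\mu(C_{k+1}(\Gamma))=1$ to guarantee the real solution is not identically zero, so that the integer solution can be taken nonzero and $\mathcal{V}_{k+1}$ is non-empty. This linear-programming step is exactly how the paper closes the proof, and without it your $\mathcal{V}_{k+1}$ does not satisfy the definition required for the induction.
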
 
\begin{proof}
Let $\{x_1,...,x_t\}$ be a complete set of $G$-orbit representatives in $V(X)$. For each $i$ pick colourings $c_{il}\in C_{k+1}(\Gamma)$ so that we have a partition
\begin{equation}\label{xjpartition}
C_{k+1}(\Gamma)=\bigsqcup_{1\leq l\leq n_i}[c_{il}]_{x_i}.
\end{equation} 
These partitions are finite by Remark \ref{finiteequiv}. For each $(i,l)$ let $Z_{il}$ be the intersection of all half-spaces containing $x_i$. Note that $Z_{il}$ will be compact, in fact it will be the union of cubes in $\dot{X}$ (the barycentric subdivision of $X$) that contain $x_i$, so $Z_{il}\cap V(X)=\{x_i\}$. We will then define $\mathcal{V}_{k+1}$ to be the collection of triples $(Z_{il},\{1\},c_{il})$.
 
We must check that this definition of $\mathcal{V}_{k+1}$ satisfies properties (1)-(4) above. Each $Z_{il}$ only contains one vertex of $X$, so properties (1) and (2) hold vacuously, and (4) is also immediate since the trivial group is in $\mathcal{QVH}$. However (3) might not hold. To rectify this we will make $\mathcal{V}_{k+1}$ contain $\alpha_{il}$ copies of $Z_{il}$ for appropriate integers $\alpha_{il}$, which we will spend the rest of the proof constructing.
 
Take $f\in E(X)$ with endpoints $x_+\in W(f)^+$ and $x_-\in W(f)^-$, and take $c\in C_{k+1}(\Gamma)$. Say $x_+$ is in the orbit of $x_i$. How can we count $\mathcal{V}^+_{k+1}(f,c)$? Well the contributions will come from precisely the pairs $(c_{il},e)$, with $e$ incident at $x_i$, such that there exists $g\in G$ with $gx_i=x_+$ and $g[e,c_{il}]=[f,c]$ - and each pair will contribute $\alpha_{il}$. Note that $c_{il}(e)=gc_{il}(f)=c(f)$, so by Remark \ref{nbhedges} there is at most one valid choice of edge $e$ for each choice of colouring $c_{il}$ - call this edge $e_l$. Another way we could try to count $\mathcal{V}^+_{k+1}(f,c)$ is by counting pairs $(g,l)$ with $g\in G$ and $1\leq l\leq n_i$ such that $gx_i=x_+$ and $g[e_l,c_{il}]=[f,c]$, and let each such $(g,l)$ contribute $\alpha_{il}$. This method will over-count, but we can measure the extent of this over-counting by the following claim, where $M^+(f,c)$ denotes the total obtained by this over-counting method.

\begin{claim}
$M^+(f,c)=|\text{Stab}_G([f,c])||\mathcal{V}^+_{k+1}(f,c)|$
\end{claim}

\begin{claimproof}
Fix $l$ such that $(c_{il},e_l)$ contributes to $\mathcal{V}^+_{k+1}(f,c)$. We claim that $(g,l)$ contributes to $M^+(f,c)$ if and only if $g[e_l,c_{il}]=[f,c]$. The `only if' direction is immediate; for the `if' direction we just need to check that $g[e_l,c_{il}]=[f,c]$ implies $gx_i=x_+$. Indeed we assumed that $(c_{il},e_l)$ contributes to $\mathcal{V}^+_{k+1}(f,c)$, so there is some $g\in G$ with $ge_l=f$ and $gx_i=x_+$, but then Remark \ref{noswap} implies that any $g\in G$ with $ge_l=f$ satisfies $gx_i=x_+$. 

Since $G$ acts on the equivalence classes of $E(X)\times C_{k+1}(\Gamma)$, we deduce that there are $|\text{Stab}_G([f,c])|$ elements $g\in G$ such that $(g,l)$ contributes to $M^+(f,c)$. Each $(g,l)$ will contribute $\alpha_{il}$ to $M^+(f,c)$, and $(c_{il},e_l)$ also contributes $\alpha_{il}$ to $|\mathcal{V}^+_{k+1}(f,c)|$, so the claim follows.
\end{claimproof}

Why on earth is it useful to over-count $\mathcal{V}^+_{k+1}(f,c)$? Well the factor of over-counting we obtained only depends on $f$ and $c$, so if we over-count $\mathcal{V}^-_{k+1}(f,c)$ in the same way to produce a total $M^-(f,c)$, then the factor of over-counting is the same. Hence the Gluing Equation $|\mathcal{V}_{k+1}^+(f,c)|=|\mathcal{V}_{k+1}^-(f,c)|$ is equivalent to $M^+(f,c)=M^-(f,c)$. The trick now is to solve these transformed Gluing Equations by using the measure from Theorem \ref{measure}.

The $M^\pm(f,c)$ are just integer sums of the $\alpha_{il}$. We want the $\alpha_{il}$ to be non-negative integers (that are not all zero), but as a start we will exhibit positive real numbers $\alpha_{il}$ that solve the Gluing Equations. We do this by taking the measure $\mu$ from Theorem \ref{measure} applied to the graph $\Gamma$ with the action of $G$, and putting
\begin{equation}\label{alpha}
\alpha_{il}=\frac{\mu([c_{il}]_{x_i})}{|\text{Stab}_G(x_i)|}.
\end{equation}

Next, observe that $[c]_f$ can be partitioned into $[-]_{x_+}$ equivalence classes, which can be written 
\begin{equation}
[c]_f=\bigsqcup_{b\in\beta_+(f,c)}[c_b]_{x_+},
\end{equation}
for some $c_b\in C_{k+1}(\Gamma)$ (and this partition is finite by Remark \ref{finiteequiv}). A pair $(g,l)$ contributes to $M^+(f,c)$ if and only if $g[x_i,c_{il}]=[x_+,c_b]$ for some $b\in\beta_+(f,c)$, so we can count $M^+(f,c)$ by adding up the contributions from each $c_b$. There will be $|$Stab$_G(x_i)|$ choices of $g$ with $gx_i=x_+$, and for each pair $(g,b)$ there will be a unique $l$ with $g[x_i,c_{il}]=[x_+,c_b]$. As $\mu$ is $G$-invariant, we see from (\ref{alpha}) that each $\alpha_{il}$ only depends on the $G$-orbit of $[x_i,c_{il}]$, and so the contribution to $M^+(f,c)$ from a given $c_b$ will equal $\alpha_{il}|$Stab$_G(x_i)|$ for any $l$ with $[x_+,c_b]\in G\cdot[x_i,c_{il}]$. For any such $l$, the $G$-invariance of $\mu$ implies that $\mu([c_b]_{x_+})=\mu([c_{il}]_{x_i})$, hence
\begin{align*}
M^+(f,c)&=\sum_{b\in\beta_+(f,c)}\frac{\mu([c_b]_{x_+})}{|\text{Stab}_G(x_i)|}|\text{Stab}_G(x_i)|\\
&=\sum_{b\in\beta_+(f,c)}\mu([c_b]_{x_+})\\
&=\mu([c]_f).
\end{align*} 
Again this only depends on $f$ and $c$, so our clever choices of $\alpha_{il}$ will also give $M^-(f,c)=\mu([c]_f)$. This will hold for all $f$ and $c$, thus solving the Gluing Equations.
 
All that remains is to convert this into a non-negative integer solution of the Gluing Equations. Note that, as functions of the $\alpha_{il}$, $M^\pm(f,c)$ only depend on the $G$-orbit of $[f,c]$; there are finitely many such orbits by Remark \ref{finiteequiv}, so there are actually just finitely many Gluing Equations (as equations in the $\alpha_{il}$). Note that the values of $\alpha_{il}$ from (\ref{alpha}) are not all zero, because, for fixed $i$, $C_{k+1}(\Gamma)$ can be expressed as a finite partition of $[-]_{x_i}$ equivalence classes as in (\ref{xjpartition}), and $\mu(C_{k+1}(\Gamma))=1$. We can then promote our non-negative real number solution of the Gluing Equations to a non-negative integer solution using the following claim. Moreover, since our real number solution isn't identically zero, we can arrange that the integer solution isn't identically zero either. This is an example of linear programming, a technique which has been widely used in topology, an early instance being Haken's work on normal surface theory \cite{normalsurf}.

\begin{claim}
Let $A$ be an integer matrix defining a linear map $A:\mathbb{R}^n\to\mathbb{R}^m$. If $\exists v\in\ker{A}-\{0\}$ with non-negative entries, then $\exists w\in\ker{A}-\{0\}$ with non-negative integer entries.
\end{claim}

\begin{claimproof}
Let $v\in\ker{A}-\{0\}$ have non-negative entries. In fact we may assume that all entries of $v$ are strictly positive (else delete columns in $A$ corresponding to the zero entries of $v$ and solve the claim for this matrix, and reintroduce the zero entries to $w$ afterwards). It suffices to find $w$ with non-negative rational entries since we can multiply out denominators to make the entries integers. Now $A\mathbb{R}^n$ is the closure of $A\mathbb{Q}^n$ so both have the same dimension as vector spaces over $\mathbb{R}$ and $\mathbb{Q}$ respectively; thus $\ker{(A)}$ and $\ker{(A)}\cap\mathbb{Q}^n$ also have the same dimensions, and so the former must be the closure of the latter. Therefore we can choose $w\in\ker{(A)}\cap\mathbb{Q}^n$ to be a rational approximation of $v$, close enough so that it has positive entries.
\end{claimproof}
\end{proof}

The inductive construction of $\mathcal{V}_k,..., \mathcal{V}_0$ will be left to the final two sections. To close this section we show that Theorem \ref{mainthm} follows from the existence of $\mathcal{V}_0$.

\begin{breakproof}[Proof of theorem \ref{mainthm}, given $\mathcal{V}_0$]
Take some triple $(Z,H,(c_x))\in\mathcal{V}_0$. Property (2) and the connectedness of $X$ imply that $Z=X$. $H$ acts cocompactly on $X$ so must be finite index in $G$. $H$ acts freely on $X$ by definition of $\mathcal{V}_0$. Property (4) in conjunction with Theorem \ref{qvhspecial} and Corollary \ref{bothspecial} tells us that $X/H$ is virtually special. We can then take $G'<H$ finite index such that $X/G'$ is special.
 \end{breakproof}

\section{Controlling boundary walls}\label{sec:controlling}

To go from $\mathcal{V}_j$ to $\mathcal{V}_{j-1}$ we will glue together the various complexes $Z/H$ along the quotients of certain `boundary walls'. In this section we will establish what boundary walls are and which ones we are gluing along, and we will prove some technical lemmas (to be used later) that control the behaviour of these walls. Lemma \ref{jbwallcolouring} comes from p1062 of Agol's paper, and Lemma \ref{acylindrical} comes from p1063, but both are recast to fit with our definitions. A novel feature of our argument is the notion of `portals', which we introduce in Definition \ref{jbandportal}; these are the parts of boundary walls that we want to glue along - they will be used extensively in the final section.

For this section fix $(Z,H,(c_x))\in\mathcal{V}_j$.

\begin{defn}(Boundary walls)\\
For $e$ an edge crossing out of $Z$ we call $W(e)$ a \textit{boundary wall} of $Z$. Equivalently, boundary walls are walls $W(e)$ for $e$ an edge intersecting $Z$ and $c_e(e)\leq j$. $Z$ is an intersection of half-spaces, so if $W$ is a boundary wall then $Z$ is contained in one half-space of $W$. Let $\partial Z\subset Z$ be the union of all boundary walls intersected with $Z$.
\end{defn}

\begin{remk}\label{Zcon}
For vertices $x,y\in V(X)$ with $x\in Z$, let $\gamma$ be a shortest edge path from $x$ to $y$, then the following are equivalent,
\begin{enumerate}[(1)]
\item $y\notin Z$,
\item $\gamma$ crosses a boundary wall,
\item $y$ and $z$ are separated by a boundary wall.
\end{enumerate}
Indeed if $y\notin Z$ then the first time $\gamma$ leaves $Z$ it must cross a boundary wall, so (1) implies (2). (2) implies (3) follows from Proposition \ref{edgegeodesic}. (3) implies (1) because $Z$ is contained in one half-space of the boundary wall. In particular this shows that any two vertices in $Z$ are connected by an edge path that stays in $Z$.
\end{remk}

\begin{lem}\label{intersectbwall}
Let $W_1,...,W_n$ be pairwise intersecting walls, with $Z\cap W_i\neq\emptyset$ for each $i$, then $Z$ contains a vertex $x$ incident at edges $e_1,..., e_n$ that are dual to $W_1,...,W_n$ respectively (and so $e_1,..., e_n$ form the corner of an $n$-cube in $X$).
\end{lem}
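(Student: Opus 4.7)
The plan is to combine Helly for walls (Proposition \ref{intersectwalls}) with a Helly-type theorem for convex subcomplexes, and then extract a vertex in $Z$ via the half-space description of $Z$.

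First, observe that $\{Z, W_1,\ldots,W_n\}$ is a pairwise-intersecting family of convex subcomplexes of $X$ (the intersections $Z\cap W_i$ and $W_i\cap W_j$ are non-empty by hypothesis). Invoke the standard Helly theorem for pairwise-intersecting convex subcomplexes of a CAT(0) cube complex --- a consequence of the median structure, derivable from Proposition \ref{intersectwalls} by taking medians of triples --- to obtain a point $p\in Z\cap W_1\cap\cdots\cap W_n$. Let $C$ be the unique cube of $X$ whose interior contains $p$. For each $i$, since $p\in W_i$ and $p$ lies in the interior of $C$, the wall $W_i$ crosses $C$ and $C$ has a midcube in $W_i$. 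Distinct walls contribute distinct midcubes of $C$, so $\dim C\ge n$ and every vertex $v$ of $C$ is incident to $n$ coordinate edges $e_1,\ldots,e_n$ with $e_i$ dual to $W_i$, spanning an $n$-subcube of $C$ at $v$.

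It remains to find a vertex $x$ of $C$ lying in $Z$. Write $Z=\bigcap_\alpha Y_\alpha$ as an intersection of closed half-spaces bounded by walls $V_\alpha$, so that $v\in V(C)$ lies in $Z$ iff $v\in Y_\alpha$ for every $\alpha$. If $V_\alpha$ does not cross $C$ then $C$ lies entirely on one side of $V_\alpha$; since $p\in C\cap Z\subset Y_\alpha$, that side is $Y_\alpha$ and every vertex of $C$ satisfies the constraint. If $V_\alpha$ crosses $C$ then it corresponds to a unique coordinate direction of $C$, and the constraint $v\in Y_\alpha$ fixes the sign of that coordinate. Distinct walls crossing $C$ give constraints on disjoint coordinates, hence jointly satisfiable, producing the required $x\in V(C)\cap V(Z)$.

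The main obstacle is producing $p\in Z\cap\bigcap_i W_i$ in the first step: the paper states Helly only for walls, so one must either appeal to the median-space version of Helly for general convex subcomplexes, or argue iteratively by picking a Helly cube for the walls alone and, if it misses $Z$, separating it from $Z$ by a wall (which must automatically cross each $W_i$, since every $W_i$ has points in $C$ and in $Z$) and enlarging the pairwise-intersecting family to re-apply Proposition \ref{intersectwalls}. However, separating walls added iteratively need not pairwise intersect each other, so termination is delicate and the cleanest route is the median approach.
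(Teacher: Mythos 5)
Your second and third steps are sound: once a point $p\in Z\cap W_1\cap\cdots\cap W_n$ is in hand, the cube whose interior contains $p$ is crossed by the $n$ (distinct) walls in distinct coordinate directions, and your coordinate-by-coordinate analysis of the half-spaces defining $Z$ does produce a vertex of that cube lying in $Z$, with the required edges at it. The genuine gap is the first step, and you have diagnosed it yourself without closing it. Proposition \ref{intersectwalls} is stated only for families of walls, whereas you need a Helly property for the mixed family $\{Z,W_1,\dots,W_n\}$, where $Z$ is a convex subcomplex and the $W_i$ are walls (not subcomplexes of $X$ at all). That property is true, but it is not ``derivable from Proposition \ref{intersectwalls} by taking medians of triples'' in any short way: the median-of-triples induction is an independent argument requiring the median structure on $X$ and the fact that $Z$ and each $W_i$ are median-convex (gated) subsets, none of which the paper develops; and your fallback iterative scheme fails for exactly the reason you state, since the separating walls you would add need not pairwise intersect, so Proposition \ref{intersectwalls} cannot be re-applied. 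As written, the crucial first step is therefore an unproved appeal to an external theorem.

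It is worth seeing how the paper closes precisely this gap while staying inside the wall-only Helly statement: it uses maximality instead of a stronger Helly theorem. Enlarge $W_1,\dots,W_n$ to a maximal pairwise-intersecting family of walls each meeting $Z$ (possible since such families have size at most $\dim X$), apply Proposition \ref{intersectwalls} to get a cube $C$, and take the vertex $x$ of $C$ not separated from a fixed vertex $y\in Z$ by any $W_i$ --- this is your coordinate argument, applied only to the walls $W_i$ rather than to all walls bounding $Z$. If $x\notin Z$, then by Remark \ref{Zcon} and Proposition \ref{edgegeodesic} some boundary wall $W$ of $Z$ separates $x$ from $y$; each $W_i$ has a point in $C$ on the $x$-side of $W$ (the midpoint of the edge of $C$ at $x$ dual to $W_i$, which cannot cross $W$) and a point of $Z$ on the $y$-side, so $W$ meets every $W_i$, and $W$ also meets $Z$, contradicting maximality. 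To repair your write-up you must either adopt this maximality device or properly import and justify the gated/median Helly property for the family $\{Z,W_1,\dots,W_n\}$.
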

\begin{proof}
It suffices to prove the lemma for $W_1,...,W_n$ a maximal family of pairwise intersecting walls with $Z\cap W_i\neq\emptyset$ for each $i$. Let $y\in Z$ be a vertex. By Helly's Theorem for cube complexes (Proposition \ref{intersectwalls}) there is an $n$-cube $C$ intersected by the walls $W_1,...,W_n$, and $C$ has a vertex $x$ such that no $W_i$ separates $x$ and $y$. Consider a shortest edge path from $y$ to $x$; if it crosses no boundary walls then $x\in Z$ and we are done. Suppose it does cross a boundary wall, $W$ say. By Proposition \ref{edgegeodesic}, $W$ divides $X$ into two half-spaces, one containing $x$ and the other containing $y$ and $Z$. For each $i$, part of $W_i$ is in the half-space containing $x$, but $Z\cap W_i\neq\emptyset$, so we must also have $W\cap W_i\neq\emptyset$, contradicting the maximality of $W_1,...,W_n$.
\end{proof}

If there are two edges crossing different boundary walls (possibly in different triples of $\mathcal{V}_j$), and these edges give the same colouring equivalence class, then we want to be able to `zip' together these boundary walls in a colour-compatible way. The following lemma will help us to achieve this, although we won't actually do the zipping until Section 9.

\begin{lem}\label{jbwallcolouring}(Zipping Lemma)\\
Let $W$ be a boundary wall of $Z$. Then the edges $e$ crossing out of $Z$ with $W=W(e)$ all induce the same class $[c_e]_W$ and hence give the same colour $c_e(\overline{W})=c_e(e)$ (this can be thought of as the colour of $W$, and we'll refer to it as such).
\end{lem}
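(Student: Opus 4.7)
The plan is to show $[c_e]_W = [c_{e'}]_W$ for any two edges $e, e'$ dual to $W$ that cross out of $Z$. Let $x, x' \in Z$ be their $Z$-endpoints; since $[c_e]_e = [c_x]_e$ by the construction of $c_e$, and $[c]_e = [c]_W$ for any $c$ whenever $e$ is dual to $W$, it suffices to show $[c_x]_W = [c_{x'}]_W$. I will connect $x$ and $x'$ by a chain of vertices of $Z$ that stays in the carrier $N(W)$ on the $Z$-side of $W$, and propagate the class $[-]_W$ along this chain using properties (1) and (2).

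First, the path. Because $W$ is a boundary wall, $Z$ sits inside one closed half-space of $W$, say $\overline{W^+}$. The midpoints of $e$ and $e'$ lie in $W \cap Z$: they obviously lie in $W$, and each other closed half-space defining $Z$ either equals $\overline{W^+}$ (containing the midpoint on its bounding wall) or is disjoint from the respective edge and contains it on the correct side. The intersection $W \cap Z$ is convex in $X$, hence a convex subcomplex of $W$ viewed as a CAT(0) cube complex, so I can choose an edge path in $W$ from the midpoint of $e$ to the midpoint of $e'$ that stays inside $W \cap Z$. This realises a sequence $e = e_0, e_1, \ldots, e_n = e'$ of edges of $X$, all dual to $W$, with midpoints in $W \cap Z$ and with consecutive pairs $e_i, e_{i+1}$ opposite sides of a common square in $N(W)$. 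Let $x_i$ be the $W^+$-endpoint of $e_i$; the same half-space argument shows $x_i \in Z$, while the $W^-$-endpoint is not, so each $e_i$ crosses out of $Z$. Consecutive $x_i, x_{i+1}$ are joined by an edge $\eta_{i+1}$ dual to a wall $W_{i+1}$ which intersects $W$; in particular $\overline{W}$ and $\overline{W_{i+1}}$ are adjacent in $\Gamma$.

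Now I claim $[c_{x_i}]_W = [c_{x_{i+1}}]_W$ for each $i$, which finishes the proof by chaining. The key is that property (2) pinches the relevant colours from both sides: since $\eta_{i+1}$ joins two vertices of $Z$ we get $c_{x_i}(\overline{W_{i+1}}) > j$, and since $e_i$ crosses out of $Z$ we get $c_{x_i}(\overline{W}) \leq j$. Hence $c_{x_i}(\overline{W_{i+1}}) \geq c_{x_i}(\overline{W}) + 1$, and by the triangle inequality in $\Gamma$ (using adjacency of $\overline{W}$ and $\overline{W_{i+1}}$) the ball of radius $c_{x_i}(\overline{W})$ around $\overline{W}$ is contained in the ball of radius $c_{x_i}(\overline{W_{i+1}})$ around $\overline{W_{i+1}}$. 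Property (1) applied to $\eta_{i+1}$ says $c_{x_i}$ and $c_{x_{i+1}}$ agree on the larger ball, so they agree on the smaller one; in particular $c_{x_i}(\overline{W}) = c_{x_{i+1}}(\overline{W})$ and $[c_{x_i}]_W = [c_{x_{i+1}}]_W$.

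The main obstacle is the geometric step of producing the path inside $W \cap Z$, which requires a little care with convexity of $W \cap Z$ and the half-carrier structure of $N(W)$; once the path exists, the remainder is the clean numerical trick that walls traversed inside $Z$ carry colour strictly greater than $j$ while the boundary wall $W$ carries colour at most $j$, so the radius around $\overline{W_{i+1}}$ always dominates the radius around $\overline{W}$.
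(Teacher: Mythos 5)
Your proof is correct and follows essentially the same route as the paper's: both connect the two dual edges by a chain of squares arising from an edge path in $W\cap Z$ (using that $W\cap Z$ is an intersection of half-spaces in $W$), and both propagate $[-]_W$ across each square via the observation that property (2) forces the interior edge's colour to exceed $j\geq c(\overline{W})$, so the ball about $\overline{W}$ on which $[-]_W$ is determined sits inside the ball on which property (1) guarantees agreement.
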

\begin{proof}\renewcommand{\qedsymbol}{}%Suppresses QED square at end of proof
\end{proof}
\begin{wrapfigure}{r}{.5\textwidth}
\centering
\includegraphics[width=0.4\textwidth, clip=true ]{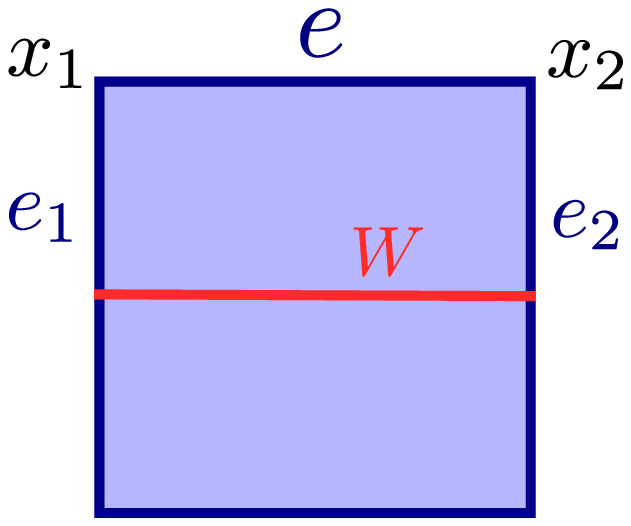}
\end{wrapfigure}
Let $S$ be a square in $X$ with an edge $e$ joining vertices $x_1,x_2\in Z$, let $e_1, e_2$ be the other edges incident at $x_1, x_2$ respectively and suppose they cross out of $Z$ with $W(e_1)=W(e_2)=W$. By property (2) of $\mathcal{V}_j$, $c_{x_1}(e)>j\geq c_{x_1}(e_1)$. $\overline{W(e)}$ and $\overline{W}$ intersect, so are adjacent vertices in $\Gamma$. Now $[c_{x_1}]_{W(e)}=[c_e]_{W(e)}$, so $c_{x_1}$ agrees with $c_e$ on the ball of radius $c_{x_1}(e)$ about $\overline{W(e)}$ in $\Gamma$. But this ball contains the ball of radius $c_{x_1}(e_1)$ about $\overline{W}$, hence $[c_{x_1}]_W=[c_e]_W$. Similarly $[c_{x_2}]_W=[c_e]_W$. So $[c_{e_1}]_W=[c_{x_1}]_W=[c_{x_2}]_W=[c_{e_2}]_W$.

$Z$ is an intersection of half-spaces, so $W\cap Z$ is an intersection of half-spaces in the induced cube structure on $W$; so by Remark \ref{Zcon} (applied to $W\cap Z\subset W$ instead of $Z\subset X$) any two vertices in $W$ that lie in $Z$ are joined by an edge path in $W$ that stays in $Z$. Vertices in $W$ that lie in $Z$ correspond to edges dual to $W$ that cross out of $Z$, and an edge in $W$ that lies in $Z$ corresponds to a square, as above, joining edges $e_1,e_2$ dual to $W$ that cross out of $Z$. Thus the lemma follows from the fact that $[c_{e_1}]_W=[c_{e_2}]_W$. \qed

\begin{defn}($j$-boundary walls and portals)\label{jbandportal}\\
	
	\begin{minipage}{.6\textwidth}
		\includegraphics[width=.9\textwidth]{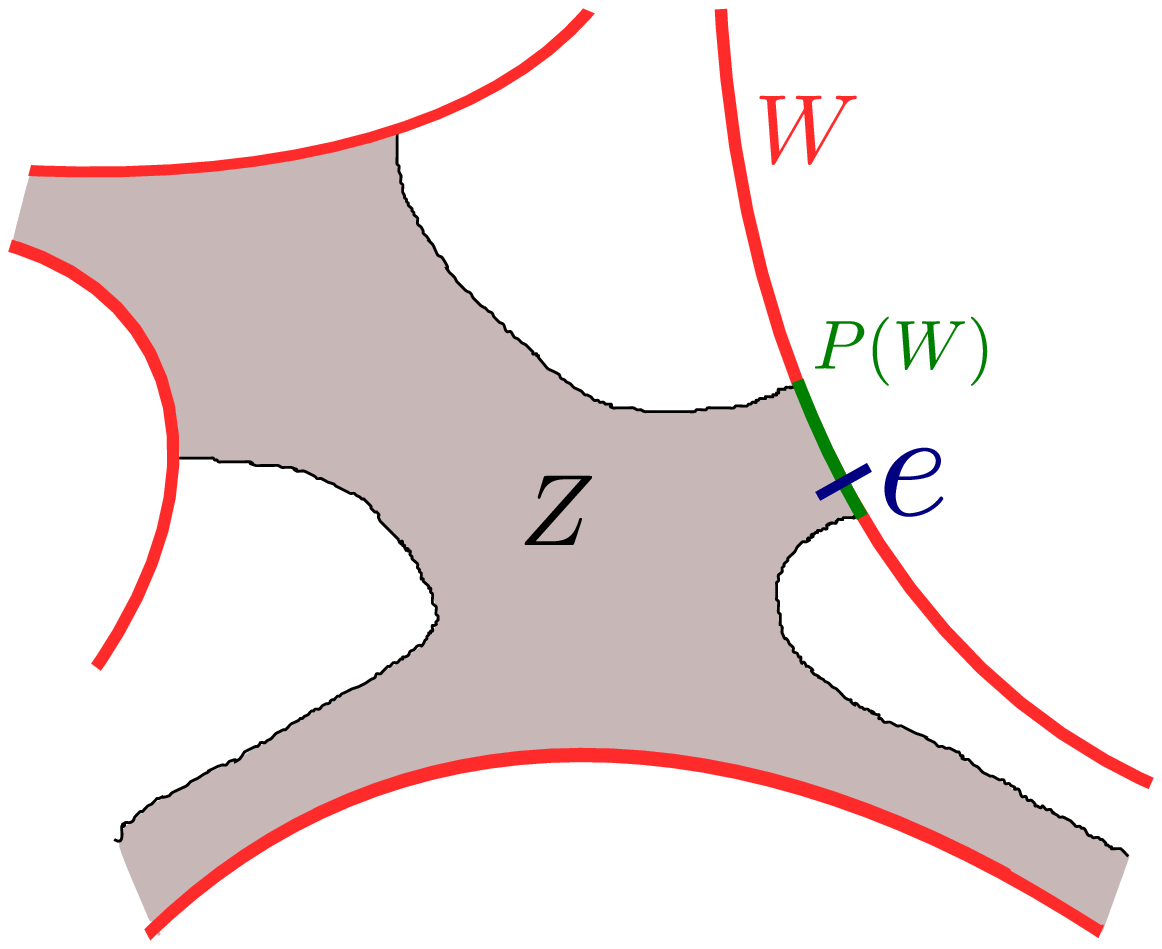}
	\end{minipage}
\begin{minipage}{.4\textwidth}
If a boundary wall of $Z\in\mathcal{V}_j$ has colour $j$ (in the sense of the Zipping Lemma), call it a \textit{$j$-boundary wall}. For $W$ a $j$-boundary wall, let $P(W):=Z\cap W$ be the \textit{portal of $W$ leading to $Z$}. Whenever we talk about a portal $P$ it will implicitly be equipped with a choice of $Z\in\mathcal{V}_j$ that it leads to. If an edge $e$ dual to $W$ crosses out of $Z$, say that $e$ is \textit{dual} to $P(W)$. These are shown in the picture to the left, with $j$-boundary walls in red and other boundary walls in black. Note that portals need not be bounded. Let $\partial_j Z\subset\partial Z$ denote the union of all portals leading to $Z$.
\end{minipage}

\end{defn}

As alluded to at the beginning of this section, we will be gluing together the various $Z/H$ along the $H$-quotients of their portals. To facilitate this we now establish some lemmas that control the behaviour of $j$-boundary walls and portals.

\begin{lem}\label{disjointboundary}
A vertex in $Z$ cannot be incident at distinct edges dual to $j$-boundary walls. Moreover, any two $j$-boundary walls are disjoint.
\end{lem}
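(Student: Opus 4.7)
The plan is to prove the two assertions in turn, with the second reducing to the first via Lemma \ref{intersectbwall}.

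For the first assertion, suppose for contradiction that $x \in Z$ is incident at distinct edges $e_1, e_2$ with $W(e_1), W(e_2)$ both $j$-boundary walls. Since $e_1 \neq e_2$, Proposition \ref{edgewall} gives $W(e_1) \neq W(e_2)$ as walls of $X$, and since both walls pass through the cubes at $x$ they satisfy $d(W(e_1), W(e_2)) < R$, so Lemma \ref{propmathcalX}(3) implies $\overline{W(e_1)} \neq \overline{W(e_2)}$ in $\mathcal{X}$. In particular (as noted in Remark \ref{nbhedges}) these two walls are joined by an edge in $\Gamma$. By the definition of a $j$-boundary wall together with Lemma \ref{jbwallcolouring}, every edge $e$ crossing out of $Z$ with $W(e) = W(e_i)$ satisfies $c_e(e) = j$; combining this with property (1) of $\mathcal{V}_j$ yields $c_x(e_1) = c_{e_1}(e_1) = j = c_{e_2}(e_2) = c_x(e_2)$. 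But Remark \ref{nbhedges} forbids $c_x$ from assigning the same colour to two edges incident at $x$, a contradiction.

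For the second assertion, let $W \neq W'$ be $j$-boundary walls and suppose $W \cap W' \neq \emptyset$. Both are boundary walls of $Z$, so both meet $Z$. Applying Lemma \ref{intersectbwall} to the pair $\{W, W'\}$ produces a vertex $x \in Z$ incident at edges $e, e'$ dual to $W$ and $W'$ respectively. Because $W$ (resp.\ $W'$) is a boundary wall, $Z$ lies entirely in one half-space of it, so the opposite endpoints of $e$ and $e'$ lie outside $Z$; thus $e$ and $e'$ cross out of $Z$ and are dual to the $j$-boundary walls $W$ and $W'$. This directly contradicts the first assertion, completing the proof.

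There is no serious obstacle here: the argument is essentially a colouring clash derived from Remark \ref{nbhedges}. The only point requiring mild care is verifying that the two walls in $X$ descend to distinct adjacent vertices of $\Gamma$, which is exactly what Lemma \ref{propmathcalX}(3) and the choice $R \geq \delta + 2\sqrt{\dim X} > 1$ are designed to ensure.
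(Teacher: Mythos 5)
Your proposal is correct and follows essentially the same route as the paper: use Proposition \ref{edgewall} and Lemma \ref{propmathcalX}(3) to see that the two walls descend to distinct adjacent vertices of $\Gamma$ both coloured $j$ by $c_x$, contradicting that $c_x$ is a colouring, and then reduce the second assertion to the first via Lemma \ref{intersectbwall}. The extra detail you supply (the Zipping Lemma plus property (1) to justify $c_x(e_1)=c_x(e_2)=j$, and the observation that the edges produced by Lemma \ref{intersectbwall} cross out of $Z$) is exactly what the paper leaves implicit.
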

\begin{proof}
Suppose	there is a vertex $x\in Z$ incident at distinct edges dual to $j$-boundary walls $W_1$ and $W_2$. By Proposition \ref{edgewall}, $W_1\neq W_2$. Furthermore, we know from Lemma \ref{propmathcalX}(3) that $W_1$ and $W_2$ map to distinct walls $\overline{W}_1$ and $\overline{W}_2$ in $\mathcal{X}$. Since $W_1$ and $W_2$ are $j$-boundary walls, we have that $c_x(\overline{W}_1)=c_x(\overline{W}_2)=j$. But $d(\overline{W}_1,\overline{W}_2)\leq d(W_1,W_2)\leq 1<R$, contradicting $c_x$ being a colouring in $C_{k+1}(\Gamma)$. For the second part of the lemma, if we have two intersecting $j$-boundary walls then we can apply Lemma \ref{intersectbwall} to reduce to the first part of the lemma.
\end{proof}

Gluing together the $Z/H$ along $H$-quotients of portals will form a graph of spaces, and in the corresponding graph of groups the edge groups incident to each vertex group will form a malnormal family by the following lemma.

\begin{lem}\label{acylindrical}
Stabilisers of distinct portals intersect trivially.
\end{lem}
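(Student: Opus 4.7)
The plan is to argue by contradiction: suppose some nontrivial $h$ lies in both stabilisers of two distinct portals $P_1 = P(W_1)$ and $P_2 = P(W_2)$, leading respectively to triples $(Z_1,H_1,(c^1_x))$ and $(Z_2,H_2,(c^2_x))$ in $\mathcal{V}_j$; the goal is to contradict the freeness of the action $H_i \curvearrowright Z_i$.

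As a first step I would show that $h$ preserves each wall $W_i$ setwise. A portal $P_i = Z_i\cap W_i$ is a nonempty convex subcomplex of $W_i$ containing the midpoint of some edge of $X$ dual to $W_i$, and since a wall is uniquely determined by any of its dual edges at a vertex (Proposition~\ref{edgewall}), $hP_i = P_i$ forces $hW_i = W_i$. A parallel argument -- using that $Z_i$ is the intersection of the half-spaces of its boundary walls which contain $Z_i$, and that $h$ preserves the relevant side of each such wall via its action on the portal $P_i$ -- yields $hZ_i = Z_i$, so $h \in H_i$.

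I would then focus on the principal case $Z_1=Z_2=:Z$ (and so $H_1=H_2=:H$), where distinctness of the portals forces $W_1\neq W_2$, and Lemma~\ref{disjointboundary} gives $W_1\cap W_2=\emptyset$. The closed convex subsets $P_1,P_2\subseteq Z$ are therefore disjoint in the $\delta$-hyperbolic CAT(0) cube complex $X$. The crucial geometric input is that the set $M$ of pairs $(p_1,p_2)\in P_1\times P_2$ realising $d(P_1,P_2)$ is nonempty and bounded: nonemptiness is a standard CAT(0) fact using properness of $X$, and boundedness follows from hyperbolicity (in the spirit of Fact~(3) before Lemma~\ref{closewalls}: the closest-point projection of one disjoint convex set onto another in a hyperbolic CAT(0) space has bounded image). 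Since $h$ preserves both $P_1$ and $P_2$ it preserves $M$, so the set $S$ of midpoints of the geodesic segments $[p_1,p_2]$ for $(p_1,p_2)\in M$ is a bounded, nonempty, $h$-invariant subset of $Z$ (by convexity of $Z$). Its CAT(0) circumcentre is then fixed by $h$ and lies in $Z$, contradicting the freeness of $H\curvearrowright Z$.

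The cases $Z_1\neq Z_2$ are handled by an analogous bounded-invariant-set argument applied to appropriate $h$-invariant subsets in $Z_1\cap Z_2$, using that $h\in H_1\cap H_2$. The main technical obstacle is verifying boundedness of the minimising set $M$ when the portals themselves may be unbounded; this ultimately rests on the quantitative $\delta$-hyperbolicity of $X$ together with the disjointness of the walls furnished by Lemma~\ref{disjointboundary}.
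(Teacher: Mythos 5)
The endgame of your argument (circumcentre of a bounded $h$-invariant set, contradicting freeness of $H$ on $Z$) is sound, but the step you yourself flag as the ``main technical obstacle'' --- nonemptiness and boundedness of the minimising set $M$ --- does not follow from hyperbolicity of $X$ together with the disjointness supplied by Lemma~\ref{disjointboundary}, and this is a genuine gap rather than a routine verification. Two disjoint convex subcomplexes of a hyperbolic CAT(0) cube complex can realise their mutual distance along an unbounded set: in $\mathbb{R}\times[0,2]$ tiled by unit squares, the walls $\mathbb{R}\times\{1/2\}$ and $\mathbb{R}\times\{3/2\}$ are disjoint, realise distance $1$ everywhere, and share the infinite cyclic stabiliser generated by the unit horizontal translation, which acts freely. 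So a statement of the shape ``disjoint convex walls in a hyperbolic cube complex have trivially intersecting stabilisers inside a freely acting group'' is simply false, and Lemma~\ref{acylindrical} cannot be deduced from Lemma~\ref{disjointboundary} plus coarse geometry alone. Fact (3) of Section~\ref{sec:makefinite} bounds the projection of one convex set onto another only when their distance is at least $2\delta$; when $d(P_1,P_2)<2\delta$ nothing is controlled, and that is exactly the regime your argument fails to exclude.

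What is missing is any essential use of the colourings. The real content of the lemma is that two distinct $j$-boundary walls of $Z$ cannot come within roughly $\delta$ of each other: if they did, one could join a vertex $x_0\in Z$ dual to $P_0$ to a vertex $x_1\in Z$ dual to $P_1$ by a short edge path in $Z$ whose dual walls all stay within $R$ of $\overline{W_0}$ in $\mathcal{X}$; property (1) of $\mathcal{V}_j$ then propagates the value $c_{x_0}(\overline{W_0})=j$ along the path to give $c_{x_1}(\overline{W_0})=j=c_{x_1}(\overline{W_1})$, contradicting that $c_{x_1}$ is a proper colouring of $\Gamma$ (the walls $\overline{W_0}\neq\overline{W_1}$ are adjacent in $\Gamma$ because they are at most $\delta\leq R$ apart). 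The paper runs the implication the other way round: from $1\neq h\in H$ stabilising both portals it extracts an axis of $h$ in each portal, these bound a flat strip of width at most $\delta$ by the Flat Strip Theorem, hence the portals are within $\delta$ of each other and the colouring contradiction applies. If you first established the separation $d(P_0,P_1)>2\delta$ by this colouring argument, your circumcentre route would give a correct (if roundabout) alternative finish; as written, the proposal omits precisely the ingredient that makes the lemma true. Two smaller points: nonemptiness of $M$ is not a consequence of properness alone for unbounded closed convex sets (consider asymptotic geodesics in $\mathbb{H}^2$), and the lemma concerns portals leading to the single $Z$ fixed at the start of Section~\ref{sec:controlling}, with stabilisers taken in $H$, so your preliminary reductions to $hW_i=W_i$, $hZ_i=Z_i$ are not needed.
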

\begin{proof}
Suppose $W_0$ and $W_1$ are distinct walls giving rise to distinct portals $P_0:=Z\cap W_0$ and $P_1:=Z\cap W_1$, and suppose $1\neq h\in H$ stabilises both of them. Since $h$ has no fixed points in $Z$, and since $P_0,P_1\subset Z$ are convex, we deduce that $h$ restricts to hyperbolic isometries of $P_0$ and $P_1$ with translation axes $a_1,a_2$ respectively (see \cite[II.6.8(1) and II.6.10(2)]{nonpos}). Any two translation axes of $h$ are asymptotic, hence we can apply \cite[II.2.13]{nonpos} to see that $a_1$ and $a_2$ bound a flat strip, and by $\delta$-hyperbolicity this strip can have width at most $\delta$. Any point $p$ on $P_0$ is contained in a cube $C$ of $X$; and one of the edges of $C$ closest to $p$ will be dual to $P_0$ and have endpoint $x$ in $Z$, with $d(p,x)\leq\tfrac{1}{2}\sqrt{\dim{C}}\leq\tfrac{1}{2}\sqrt{\dim{X}}$. The same is true for $P_1$, and so there is a path $\beta$ in $Z$ between vertices $x_0,x_1\in Z$ of length at most $\delta+\sqrt{\dim{X}}$ with $x_0,x_1$ being incident at edges dual to $P_0,P_1$ respectively. By considering the sequence of cubes that $\beta$ travels through, there is an edge path $\gamma$ in $Z$ from $x_0$ to $x_1$ with $\gamma\subset N_{\sqrt{\dim{X}}}(\beta)$. Let $\gamma$ have edges $e_1,...,e_n$ and vertices $x_0=y_0,y_1,...,y_n=x_1$. Since $R\geq \delta+2\sqrt{\dim{X}}$ (and thus the mystery of $R$ is revealed!), we have that $d(\overline{W(e_i)},\overline{W_0})\leq d(W(e_i),P_0)\leq R$ for $1\leq i\leq n$, and so $\overline{W(e_i)}$ and $\overline{W_0}$ are adjacent vertices in $\Gamma$. 

For $1\leq i\leq n$, we know from property (1) of $\mathcal{V}_j$ that $[c_{y_{i-1}}]_{e_i}=[c_{y_i}]_{e_i}$, so  $c_{y_i}(\overline{W_0})=c_{y_{i-1}}(\overline{W_0})$. We deduce that $c_{x_1}(\overline{W_0})=c_{x_0}(\overline{W_0})$. And $c_{x_0}(\overline{W_0})=j=c_{x_1}(\overline{W_1})$ since $W_0$ and $W_1$ are $j$-boundary walls. But $d(\overline{W_1},\overline{W_0})\leq d(W_1,W_0)\leq\delta\leq R$, and by \ref{propmathcalX}(3) we know that $\overline{W_0}\neq\overline{W_1}$, hence $\overline{W_0}$ and $\overline{W_1}$ are adjacent vertices in $\Gamma$ which are given the same colour by $c_{x_1}$, a contradiction.
\end{proof}

\begin{defn}(Splitting along colourings)\\\label{coloursplit}
For $\overline{W}$ a wall in $\mathcal{X}$ and $c\in C_{k+1}(\Gamma)$, let $B(\overline{W},c):=\overline{W}\cap\bigcup c^{-1}([1,j])$ be the intersection of $\overline{W}$ with other walls in $\mathcal{X}$ that are coloured $\leq j$ by $c$. Define \textit{$\overline{W}$ split along $c$} by $\overline{W}- c:=\overline{W}- B(\overline{W},c)$ (this will of course depend on $j$, but $j$ is fixed for the rest of the notes so we don't include it in the notation). Working in the barycentric subdivision of $\overline{W}$, $\overline{W}- c$ will be a cube complex with some missing faces corresponding to where we have removed $B(\overline{W},c)$. In general $\overline{W}- c$ will be disconnected, so for a vertex $\bar{x}$ in $\overline{W}$, let $(\overline{W}-c)(\bar{x})$ denote the component of $\overline{W}-c$ containing $\bar{x}$.
\end{defn}

\includegraphics[width=0.55\textwidth, clip=true ]{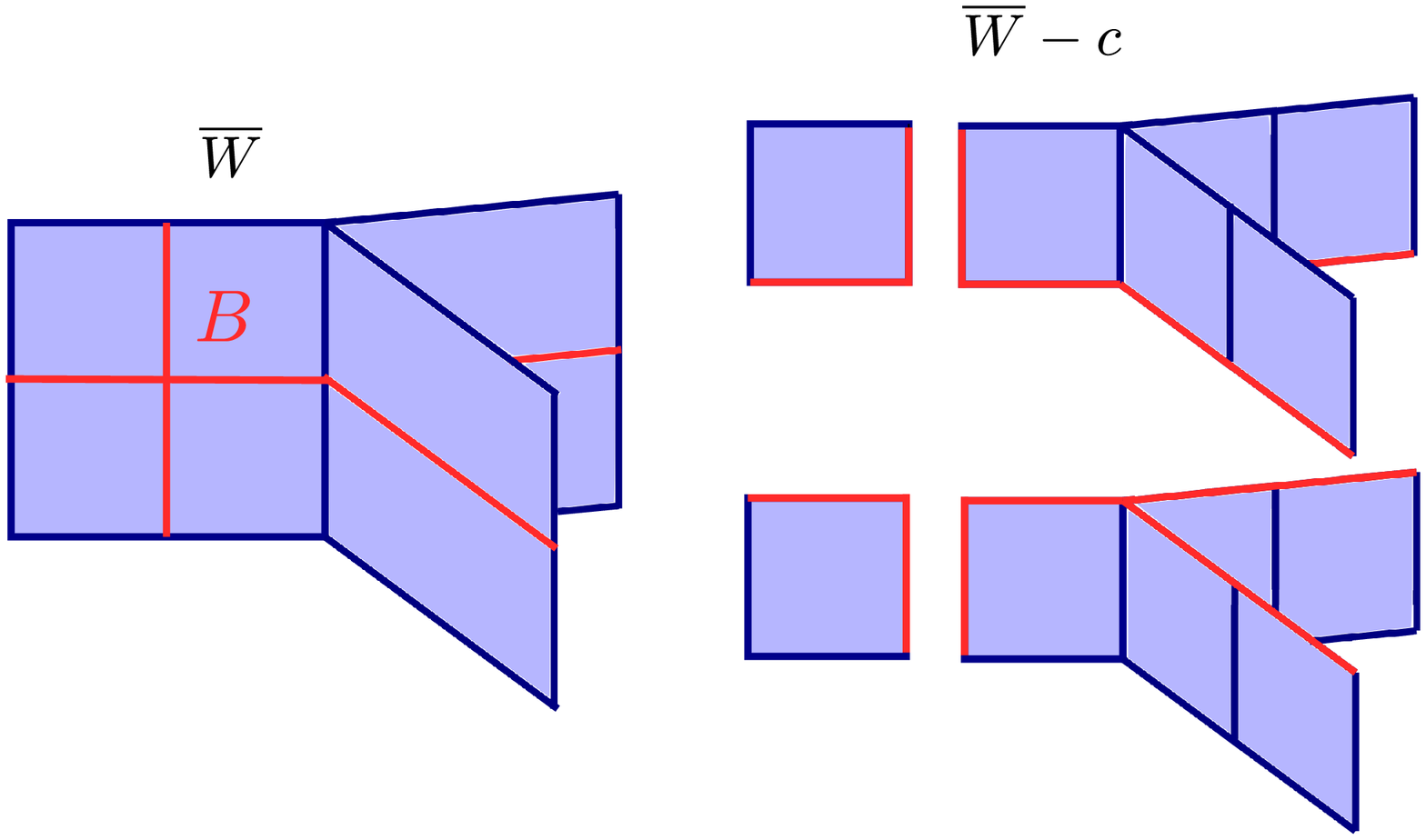}
\bigskip

\begin{lem}(Portal covers)\\\label{partialjcovers}
Let $W$ be a $j$-boundary wall with portal $P=Z\cap W$ and let $e$ be an edge dual to $P$. Let $x_0$ denote the midpoint of $e$ - so $x_0$ is a vertex of $W$.  Then the quotient map $m:X\to\mathcal{X}$ restricts to a universal covering map
\begin{equation}\label{partialjcover}
m|_{\mathring{P}}: \mathring{P}\to(\overline{W}- c_e)(\bar{x}_0),
\end{equation}
where $\mathring{P}$ is the interior of $P$ with respect to the metric topology of $W$ (equivalently $\mathring{P}$ is $P$ minus all boundary walls $W'\neq W$ that intersect $W$). Moreover, $(\overline{W}- c_e)(\bar{x}_0)=(\overline{W}- c)(\bar{x})$ for any other $x\in P$ a vertex of $W$ and any $c\in[c_e]_W$ - in particular, by the Zipping Lemma, $(\overline{W}- c_e)(\bar{x}_0)$ is independent of the choice of $e$ dual to $P$. Furthermore, the group of deck transformations of $m|_{\mathring{P}}$ is $K_P:=\{g\in K\,|\,gx_0\in P\}$ (where $K$ is from Lemma \ref{mathcalX}). 
\end{lem}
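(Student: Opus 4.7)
The plan is to verify four things in order: (i) $m(\mathring{P})\subset\overline{W}-c_e$; (ii) $\mathring{P}$ is a connected component of $m^{-1}(\overline{W}-c_e)\cap W$, so that $m|_{\mathring{P}}$ is a covering map onto $(\overline{W}-c_e)(\bar{x}_0)$; (iii) $\mathring{P}$ is simply connected, yielding the universal cover assertion; and (iv) the deck group is $K_P$, and the described invariance under $c\in[c_e]_W$ and the choice of vertex $x\in P$ holds.

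The central technical step, used for both (i) and (ii), is the following colouring chain. Suppose $y\in\mathring{P}$ lies on a wall $W''$ of $X$ with $W''\neq W$; then $W''\cap W\ni y\neq\emptyset$, so the definition of $\mathring{P}$ forces $W''$ not to be a boundary wall of $Z$. Applying Lemma \ref{intersectbwall} to $W,W''$ produces $x'\in Z$ incident at edges $e',f'$ dual to $W,W''$ respectively. Since $e'$ crosses out of $Z$, the Zipping Lemma (Lemma \ref{jbwallcolouring}) gives $c_{x'}(\overline{W})=c_{e'}(\overline{W})=j$ and $[c_e]_W=[c_{e'}]_W$; as $\overline{W''}$ is adjacent to $\overline{W}$ in $\Gamma$ and $j\geq 1$, these identifications force $c_e(\overline{W''})=c_{e'}(\overline{W''})=c_{x'}(\overline{W''})$. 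But $f'$ stays in $Z$ since $W''$ is not a boundary wall, so property (2) of $\mathcal{V}_j$ yields $c_{x'}(\overline{W''})>j$. This rules out $m(y)\in B(\overline{W},c_e)$ and establishes (i). An almost identical argument, now applied to a genuine boundary wall $W'\neq W$ of $Z$ meeting $W$ (and invoking the Zipping Lemma for $W'$ at $x'$), shows $c_e(\overline{W'})\leq j$; hence the topological boundary of $P$ in $W$ maps into $B(\overline{W},c_e)$, making $\mathring{P}$ clopen in $m^{-1}(\overline{W}-c_e)\cap W$. Since $P$ is the intersection of $W$ with the half-spaces defining $Z$, $P$ is convex in $W$, so $\mathring{P}$ is convex and open in $W$; moreover $x_0\in\mathring{P}$ because the midpoint of $e$ lies on no wall other than $W$, so $\mathring{P}$ is non-empty and path-connected, completing (ii).

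The restriction to $\mathring{P}$ of the covering $m|_W\colon W\to\overline{W}$ (a covering because $K\cap G_W$ acts freely on $W$ by Lemma \ref{mathcalX}(3), and distinct preimages are separated by Lemma \ref{propmathcalX}(3)) is thus a covering onto $(\overline{W}-c_e)(\bar{x}_0)$; convexity in the CAT(0) space $W$ makes $\mathring{P}$ contractible and hence simply connected, giving (iii). For (iv), if $g\in K_P$ then $gW\cap W\ni gx_0$, and since $m(gW)=m(W)=\overline{W}$, Lemma \ref{propmathcalX}(3) forces $gW=W$; $gx_0$ is the midpoint of $ge$ (dual to $W$), so lies on no other wall, giving $gx_0\in\mathring{P}$; as $g$ permutes components of $m^{-1}(\overline{W}-c_e)\cap W$ and sends $\mathring{P}$ to a component containing $gx_0\in\mathring{P}$, we conclude $g\mathring{P}=\mathring{P}$, so $g$ is a deck transformation of $m|_{\mathring{P}}$. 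Conversely, every deck transformation of $m|_{\mathring{P}}$ extends to one of $m|_W$, realised by some $g\in K\cap G_W$ which maps $x_0$ into $\mathring{P}\subset P$, so $g\in K_P$. For the invariance statement, any $c\in[c_e]_W$ agrees with $c_e$ on all walls adjacent to $\overline{W}$ in $\Gamma$, so $B(\overline{W},c)=B(\overline{W},c_e)$ and $\overline{W}-c=\overline{W}-c_e$; any other $x\in P$ which is a vertex of $W$ is a midpoint of an edge dual to $W$, hence lies in $\mathring{P}$ by the same argument, and path-connectedness places $\bar{x}$ in the same component as $\bar{x}_0$. The independence from the choice of $e$ dual to $P$ is then the Zipping Lemma's statement that $[c_e]_W$ does not depend on $e$.

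The main obstacle is the colouring chain in the second paragraph, which requires three independent inputs balanced against property (2): the Zipping Lemma for $W$ (linking $c_e$ and $c_{e'}$ on walls near $\overline{W}$), the definition of $c_{e'}$ (linking it to $c_{x'}$ at the endpoint of $e'$ in $Z$), and the $\Gamma$-adjacency of $\overline{W}$ and $\overline{W''}$. Keeping track of which walls are boundary walls of $Z$ and which are not, and invoking Lemma \ref{intersectbwall} in each case, is what makes the argument delicate.
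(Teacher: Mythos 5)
Your proof is correct and follows essentially the same route as the paper: the heart of both arguments is the characterisation, obtained from Lemma \ref{intersectbwall}, the Zipping Lemma and property (2) of $\mathcal{V}_j$, of which walls meeting $W$ are boundary walls of $Z$ in terms of the colours that $c_e$ assigns them, followed by convexity of $\mathring{P}$ for simple connectivity and the identification of the deck group inside $K$. Your packaging of the covering-space step --- exhibiting $\mathring{P}$ as a clopen connected subset of $m^{-1}(\overline{W}-c_e)\cap W$ and restricting the covering $W\to\overline{W}$ --- is if anything slightly more careful than the paper's path-based phrasing, but it is the same argument.
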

\begin{proof}
Consider a wall $W_1$ with $W_1\cap W\cap Z\neq\emptyset$. By Lemma \ref{intersectbwall}, there is a vertex $x\in Z$ with edges $e_1,e_2$ dual to $W_1, W$ respectively. Then
\begin{equation}\label{portboundwall}
c_{e_1}(\overline{W_1})=c_x(\overline{W_1})=c_{e_2}(\overline{W_1})=c_e(\overline{W_1}),
\end{equation}
with the third equality due to the Zipping Lemma. By property (2) of $\mathcal{V}_j$, $W_1$ is a boundary wall if and only if $c_{e_1}(\overline{W_1})\leq j$. So (\ref{portboundwall}) implies that $W_1$ is a boundary wall if and only if $c_e(\overline{W_1})\leq j$.

 The restriction of the quotient map $m:X\to\mathcal{X}=X/K$ certainly defines a map $m|_{\mathring{P}}:\mathring{P}\to\overline{W}\subset\mathcal{X}$ with $m(x_0)=\bar{x}_0$. Now a path $\gamma$ in $P$ based at $x_0$ can go anywhere in $W$ except cross over a boundary wall $W_1$, which by the above arguments is equivalent to $\gamma$ not crossing a wall $W_1$ with $c_e(\overline{W_1})\leq j$, which in turn is equivalent to $m\circ\gamma$ not crossing $B(\overline{W},c_e)$ (see Definition \ref{coloursplit}). This establishes that $m|_{\mathring{P}}$ is a covering. Note that $\mathring{P}$ is equal to $W$ intersected with open half-spaces corresponding to other boundary walls, so it is convex in $X$ and hence simply connected, making $m|_{\mathring{P}}$ a universal covering.
  
 If $c\in[c_e]_W$, then it follows from the definition of $[-]_W$ that, among the walls intersecting $\overline{W}$, the walls coloured $\leq j$ by $c$ are exactly the same as those coloured $\leq j$ by $c_e$ - hence $\overline{W}-c=\overline{W}-c_e$. If we also replace $x_0$ by a different $x\in P$ then clearly $\bar{x}=m_P(x)\in\overline{W}-c$, and so $(\overline{W}-c_e)(x)=(\overline{W}-c)(x)=(\overline{W}-c)(x_0)$.
  
 Finally, we know that $m:X\to\mathcal{X}$ has $K$ as the group of deck transformations, and that $\mathring{P}$ is a component of $m^{-1}((\overline{W}- c_e)(\bar{x}_0))$, and that $K$ acts on these components. So $K_P$ is the stabiliser in $K$ of $\mathring{P}$ (and of $P$), it preserves the covering $m|_{\mathring{P}}$ and it acts transitively on $m|_{\mathring{P}}^{-1}(\bar{x}_0)=P\cap m^{-1}(\bar{x}_0)$ - thus $K_P$ is exactly the group of deck transformations of $m|_{\mathring{P}}$.
\end{proof}

\section{Gluing up walls}\label{sec:gluing}

We are now ready to start constructing $\mathcal{V}_{j-1}$ from $\mathcal{V}_j$. Our strategy will be to glue together different complexes $Z/H$ along the $H$-quotients of portals with `compatible colourings'. To glue together two quotient portals, we need them to be isomorphic as complexes and for this isomorphism to be `colour compatible'; initially it may not be possible to glue up all the portals, but the idea is to make it possible by passing to finite covers of the $Z/H$. The arguments in this section are based on Theorem 3.1 from Agol's paper, for example we quote the same theorems of Haglund-Wise (Theorem \ref{subgroupsep}) and Bestvina-Feighn (Theorem \ref{comb}) - but our arguments contain considerably more detail and will appear quite different as they are recast to work for our set-up. 

\begin{defn}(Compatible portals)\\
We say that two portals $P$ and $P'$ leading to $(Z,H,(c_x)),(Z',H',(c'_x))\in\mathcal{V}_j$ respectively are \textit{compatible} if there are edges $e$ and $f$ dual to $P$ and $P'$ respectively such that $[e,c_e]\in G\cdot[f,c'_f]$.
\end{defn}
 Let $P$ and $P'$ be compatible portals as above, and say they lie in walls $W$ and $W'$. Take $g\in G$ and edges $e$ and $f$ dual to $P$ and $P'$ such that $[e,c_e]=g[f,c'_f]$, and let $x_0$ and $y_0$ be the midpoints of $e$ and $f$. So $e=gf$, $W=gW'$ and $x_0=gy_0$. As $K$ is normal in $G$, we have the following commuting diagram.
\begin{equation}
\begin{tikzcd}[
ar symbol/.style = {draw=none,"#1" description,sloped},
isomorphic/.style = {ar symbol={\cong}},
equals/.style = {ar symbol={=}},
subset/.style = {ar symbol={\subset}}
]
X\ar{r}{g}\arrow{d}{m} &X\arrow{d}{m} \\
\mathcal{X}\arrow{r}{g}&\mathcal{X}\\
\end{tikzcd}
\end{equation}
Then $g$ acts on the wall $\overline{W'}$ to produce
\begin{align*}
g(\overline{W'}-c'_f)(\bar{y}_0)&=(\overline{W}-gc'_f)(\bar{x}_0)\\
&=(\overline{W}-c_e)(\bar{x}_0)&\text{by Lemma \ref{partialjcovers} and the fact that }[c_e]_W=[gc'_f]_W.
\end{align*}
\begin{minipage}{.5\textwidth}
As the maps (\ref{partialjcover}) are coverings for $P$ and $P'$, we deduce that $g$ restricts to a cube isomorphism $\mathring{P'}\to\mathring{P}$ and also $P'\to P$. In fact $P'\to P$ is equivariant with respect to the group isomorphism $K_{P'}\to K_P;\,k\mapsto gkg^{-1}$. This can all be put into the following commutative diagram.
\end{minipage}
\begin{minipage}{.5\textwidth}
\begin{equation}\label{portaltranslate}
\begin{tikzcd}[
ar symbol/.style = {draw=none,"#1" description,sloped},
isomorphic/.style = {ar symbol={\cong}},
equals/.style = {ar symbol={=}},
subset/.style = {ar symbol={\subset}}
]
K_{P'}\arrow{r}{g(-)g^{-1}}[swap]{\sim}&K_P\\
	P'\arrow[loop above, distance=20]\arrow{r}{g}&P\arrow[loop above, distance=20]\\
	\mathring{P'}\arrow[hook]{u}\arrow{d}{m}\arrow{r}{g}&\mathring{P}\arrow[hook]{u}\arrow{d}{m}\\
	(\overline{W'}-c'_f)(\bar{y}_0)\arrow{r}{g}&(\overline{W}-c_e)(\bar{x}_0)
\end{tikzcd}
\end{equation}
\end{minipage}

We also have the following lemma and corollary, which are basically consequences of the Zipping Lemma. These will allow us to group together compatible portals into compatibility classes.

\begin{lem}\label{teleport}(Teleports)\\
Portals $P$ and $P'$ leading to $(Z,H,(c_x)),(Z',H',(c'_x))\in\mathcal{V}_j$ are compatible if and only if there exists $g\in G$ such that
\begin{equation}\label{portcomp}
\{[e,c_e]\,|\,\text{$e$ is dual to $P$}\}=g\{[f,c'_f]\,|\,\text{$f$ is dual to $P'$}\}.
\end{equation} 
In this case we say that $P$ is a $g$\emph{-teleport of $P'$} (note that $P$ could be a $g$-teleport of $P'$ for several different $g$, and that $P'$ could have several different $g$-teleports corresponding to portals that lead to different $Z\in\mathcal{V}_j$).
\end{lem}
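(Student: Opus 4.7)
The backward direction is immediate: given such a $g$, pick any edge $f$ dual to $P'$ and the hypothesis produces an edge $e$ dual to $P$ with $[e,c_e]=g[f,c'_f]$, witnessing compatibility. The real content is the forward direction, where the idea is to upgrade the ``single pair'' of compatible edges supplied by the definition of compatibility into the set-level equality (\ref{portcomp}).

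Suppose then that $P$ and $P'$ are compatible, so there exist edges $e_0$ dual to $P$, $f_0$ dual to $P'$, and $g\in G$ with $[e_0,c_{e_0}]=g[f_0,c'_{f_0}]$. From the commutative diagram (\ref{portaltranslate}) preceding the lemma, $g$ restricts to a cube-complex isomorphism $P'\to P$. Since an edge dual to a portal is uniquely recovered from its midpoint, which is precisely a vertex of the portal, the isomorphism induces a bijection $f\mapsto gf$ between edges dual to $P'$ and edges dual to $P$. Thus (\ref{portcomp}) reduces to showing that for every $f$ dual to $P'$, one has $[gf,c_{gf}]=g[f,c'_f]=[gf,gc'_f]$; unwinding the definition of $[-]_{gf}=[-]_{W(gf)}$, this is the assertion that $c_{gf}$ and $gc'_f$ agree on the ball of radius $c_{gf}(\overline{W})=j$ around $\overline{W}$ in $\Gamma$, where $W=gW'$ is the wall containing $P$.

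I would establish this agreement by chaining three equalities of $[-]_W$-classes. First, the Zipping Lemma applied in $Z$ gives $[c_{e_0}]_W=[c_{gf}]_W$, since $e_0$ and $gf$ are both dual to $P$. Second, the Zipping Lemma applied in $Z'$ gives $[c'_{f_0}]_{W'}=[c'_f]_{W'}$, which translates under $g$ to $[gc'_{f_0}]_W=[gc'_f]_W$. Third, the initial hypothesis restricts to $[c_{e_0}]_W=[gc'_{f_0}]_W$. Chaining these yields $[c_{gf}]_W=[gc'_f]_W$, which is exactly the required identity (since the $[-]_W$-equivalence records agreement on the ball of radius $j$ about $\overline{W}$). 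The one point that needs a bit of care is that $f\mapsto gf$ really is a bijection, not merely an injection, between the two sets of dual edges; this is where one uses that $g:P'\to P$ is a bijection of vertices, together with the fact that each vertex of a portal is the midpoint of a unique edge dual to the ambient $j$-boundary wall that crosses out of the relevant $Z$ (so edges dual to a portal are in natural bijection with its vertices).
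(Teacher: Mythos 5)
Your proposal is correct and follows essentially the same route as the paper: both use the isomorphism $g:P'\to P$ established in the discussion before the lemma to transport dual edges, and both chain applications of the Zipping Lemma in $Z$ and $Z'$ with the initial compatibility hypothesis to identify the $[-]_W$-classes. The only (cosmetic) difference is that you package the conclusion as a single bijection $f\mapsto gf$ of dual edges, whereas the paper proves the inclusion $\supset$ and then obtains $\subset$ symmetrically via $g^{-1}$.
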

\begin{proof}
	Suppose $P$ and $P'$ are compatible. Then there exist edges $e$ and $f$ dual to $P$ and $P'$, and $g\in G$, such that $[e,c_e]= g\cdot[f,c'_f]$. If $f_1$ is another edge dual to $P'$, then $e_1:=gf_1$ is dual to $P$ because, as we showed above, $g:P'\to P$ is an isomorphism. Suppose that $P$ and $P'$ lie in walls $W$ and $W'$. We then have 
	\begin{align*}
g[f_1,c'_{f_1}]&=\{gf_1\}\times g[c'_{f_1}]_{W'}\\
&=\{e_1\}\times g[c'_f]_{W'}&\text{by the Zipping Lemma}\\
&=\{e_1\}\times [gc'_f]_{W}\\
&=\{e_1\}\times [c_e]_{W}&\text{since }[e,c_e]= g\cdot[f,c'_f]\\
&=\{e_1\}\times [c_{e_1}]_{W}&\text{by the Zipping Lemma}\\
&=[e_1,c_{e_1}].
	\end{align*}
	This gives the $\supset$ inclusion in (\ref{portcomp}), and the $\subset$ inclusion follows similarly by considering $g^{-1}:P\to P'$. Conversely, it is immediate that (\ref{portcomp}) implies compatibility of $P$ and $P'$.
\end{proof}
\begin{cor}\label{compatequiv}
	Teleports form a groupoid on the set of portals, meaning that:
	\begin{itemize}
		\item Any portal is a 1-teleport of itself.
		\item If $P$ is a $g$-teleport of $P'$, then $P'$ is a $g^{-1}$-teleport of $P$.
		\item If $P$ is a $g$-teleport of $P'$ and $P'$ is a $g'$-teleport of $P''$, then $P$ is a $gg'$-teleport of $P''$
	\end{itemize}
	In particular, compatibility of portals is an equivalence relation, and we will refer to the equivalence classes as \emph{compatibility classes}. The compatibility class of a portal $P$ is denoted $[P]$.
\end{cor}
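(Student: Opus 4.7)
The plan is to verify the three groupoid axioms directly from the characterization established by Lemma \ref{teleport}, after which the fact that compatibility is an equivalence relation follows as a formal consequence. To streamline the bookkeeping, I would introduce the shorthand $\mathcal{E}(P) := \{[e,c_e] \mid e \text{ is dual to } P\}$ for any portal $P$ leading to some $(Z,H,(c_x))\in\mathcal{V}_j$, so that by Lemma \ref{teleport}, $P$ is a $g$-teleport of $P'$ if and only if $\mathcal{E}(P) = g\,\mathcal{E}(P')$ as subsets of $E(X)\times C_{k+1}(\Gamma)/{\sim}$.

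With this reformulation, each axiom becomes a one-line manipulation. For the identity, $\mathcal{E}(P) = 1\cdot\mathcal{E}(P)$ is automatic, so $P$ is a $1$-teleport of itself. For inversion, if $\mathcal{E}(P) = g\,\mathcal{E}(P')$, then applying $g^{-1}$ (which is a bijection on equivalence classes) gives $g^{-1}\mathcal{E}(P) = \mathcal{E}(P')$, so $P'$ is a $g^{-1}$-teleport of $P$. For composition, if $\mathcal{E}(P) = g\,\mathcal{E}(P')$ and $\mathcal{E}(P') = g'\mathcal{E}(P'')$, then substitution yields $\mathcal{E}(P) = g\,(g'\mathcal{E}(P'')) = (gg')\mathcal{E}(P'')$, giving that $P$ is a $gg'$-teleport of $P''$.

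For the final statement, compatibility of portals is exactly the existence of some $g\in G$ making one a $g$-teleport of the other, so the three groupoid properties immediately translate into reflexivity, symmetry, and transitivity of the compatibility relation, making it an equivalence relation whose classes are the compatibility classes $[P]$.

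There is essentially no obstacle here beyond invoking Lemma \ref{teleport}: the content of the corollary lies entirely in that lemma, where the Zipping Lemma was used to upgrade the existence of one matched pair of dual edges to a $G$-equivariant bijection on the full sets of dual-edge colouring classes. Once that equivalent reformulation is in hand, the groupoid axioms reduce to trivial facts about the left action of $G$ on equivalence classes.
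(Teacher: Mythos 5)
Your proposal is correct and matches the paper's (implicit) argument: the corollary is stated without proof precisely because, once Lemma \ref{teleport} recasts "$P$ is a $g$-teleport of $P'$" as the set equality $\mathcal{E}(P)=g\,\mathcal{E}(P')$, the three groupoid axioms are immediate from the left $G$-action on equivalence classes, exactly as you write.
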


For each triple $(Z,H,(c_x))$, $H$ acts on the set of portals leading to $Z$, and these portals are disjoint by Lemma \ref{disjointboundary}. This implies that the map $P/H_P\to Z/H$ is an embedding for each portal $P$. The aim is to glue together quotients $Z/H$ and $Z'/H'$ along portals $P/H_P$ and $P'/H'_{P'}$ for compatible portals $P$ and $P'$. Lemma \ref{teleport} tells us that $P\cong P'$, which is a good start, but it doesn't imply that we get isomorphic quotients $P/H_P\cong P'/H'_{P'}$; the next step is to overcome this by taking finite covers of the complexes $Z/H$, or equivalently by replacing the groups $H$ with finite index subgroups.

\begin{lem}\label{lem:portalisom}
For each triple $(Z,H,(c_x))$ there is a finite index subgroup $\hat{H}\triangleleft H$, such that whenever portals $P$ and $P'$ lead to $(Z,H,(c_x)),(Z',H',(c'_x))\in\mathcal{V}_j$, with $P$ a $g$-teleport of $P'$, then 
\begin{equation}\label{hatHPconjugate}
\hat{H}_P=g\hat{H}'_{P'}g^{-1}.
\end{equation} 
In particular $P/\hat{H}_P\cong P'/\hat{H}'_{P'}$.
\end{lem}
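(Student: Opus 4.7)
The plan is to apply Theorem \ref{thm:command} to each triple $(Z,H,(c_x))\in\mathcal{V}_j$ individually, but only after first coordinating the choices of finite-index normal subgroups of the portal stabilisers across all compatibility classes so that the desired equation $\hat{H}_P=g\hat{H}'_{P'}g^{-1}$ will hold automatically.

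First I would verify the hypotheses of Theorem \ref{thm:command}. By property (4) and Theorem \ref{qvhspecial}, each $H$ is hyperbolic and virtually special. Pick $H$-orbit representatives $P_1,\dots,P_n$ of the portals leading to $Z$; there are only finitely many orbits because $\partial_j Z/H$ is compact and, by Lemma \ref{disjointboundary}, a disjoint union of the $P_i/H_{P_i}$. Each $H_{P_i}$ is quasi-convex in $H$: it acts properly and cocompactly on the closed convex subspace $P_i\subset Z$, so an orbit map embeds it quasi-isometrically into $H$. Lemma \ref{acylindrical} gives triviality of stabiliser intersections for distinct portals, which upgrades to malnormality of $\{H_{P_1},\dots,H_{P_n}\}$ in $H$. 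Theorem \ref{thm:command} then yields finite-index subgroups $\dot H_{P_i}\triangleleft H_{P_i}$.

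The key step is coordinating the further choices $\hat H_{P_i}\leq\dot H_{P_i}$ across compatibility classes. For each compatibility class $[P]$, fix a representative portal $P_0$ (which necessarily lies in some triple) and set $T:=\text{Stab}_G(P_0)$. The same cocompactness argument as in Remark \ref{cocompactstabiliser} shows that, for any portal $P_s$ in $[P]$ and $g_s\in G$ with $g_sP_0=P_s$, the stabiliser $H_{P_s}$ is finite index in $\text{Stab}_G(P_s)=g_sTg_s^{-1}$; hence $g_s^{-1}\dot H_{P_s}g_s$ is finite index in $T$, and $T$ itself is finitely generated (as it contains a finitely generated quasi-convex $H_{P_s}$ as a finite-index subgroup). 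Since $\mathcal{V}_j$ is finite and each triple contributes only finitely many portal orbits, there are only finitely many slots $s$ in $[P]$, so $\bigcap_s g_s^{-1}\dot H_{P_s}g_s$ is finite index in $T$; take its normal core $\hat J_{[P]}\triangleleft T$, a normal finite-index subgroup. Define $\hat H_{P_s}:=g_s\hat J_{[P]}g_s^{-1}$ for each slot; this is contained in $\dot H_{P_s}$, normal in $\text{Stab}_G(P_s)$ (hence in $H_{P_s}$), and finite index in $H_{P_s}$. Applying Theorem \ref{thm:command} to each triple now produces $\hat H\triangleleft H$ of finite index with $\hat H\cap H_{P_i}=\hat H_{P_i}$.

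The compatibility equation is then a direct computation. For a portal $P=hP_i$ in $Z$, normality of $\hat H$ in $H$ gives $\hat H_P=h\hat H_{P_i}h^{-1}=(hg_i)\hat J_{[P]}(hg_i)^{-1}$, where $g_i$ is the chosen element with $g_iP_0=P_i$. If $P$ is a $g$-teleport of $P'=h'P'_{i'}$, then $P$ and $P'$ lie in the same compatibility class $[P]$, so $hg_iP_0=gh'g_{i'}P_0$ and $t:=(hg_i)^{-1}(gh'g_{i'})\in T$; normality of $\hat J_{[P]}$ in $T$ then yields $\hat H_P=(hg_i)t\hat J_{[P]}t^{-1}(hg_i)^{-1}=(gh'g_{i'})\hat J_{[P]}(gh'g_{i'})^{-1}=g\hat H'_{P'}g^{-1}$. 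The main obstacle throughout is this coordination across triples: $H_P$ and $gH'_{P'}g^{-1}$ need not be equal as subgroups of $G$, so one cannot naively match finite-index choices made independently in each $H$; it is their commensurability inside $\text{Stab}_G(P)$ that allows a common $\hat J_{[P]}$, and it is Theorem \ref{thm:command} that lets us realise that common choice as $\hat H\cap H_{P_i}$ inside a single normal finite-index $\hat H\triangleleft H$.
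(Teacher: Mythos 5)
Your proof is correct, and its overall architecture is the same as the paper's: invoke Theorem \ref{thm:command} once to produce the $\dot{H}_{P_i}$ (checking hyperbolicity and virtual specialness of $H$ via property (4) and Theorem \ref{qvhspecial}, quasi-convexity of the $H_{P_i}$ via convexity of $P_i\subset Z$, and malnormality via Lemma \ref{acylindrical}), then coordinate a further finite-index choice across each compatibility class, then invoke the ``for any further finite index subgroups'' clause of Theorem \ref{thm:command} to realise that choice as $\hat{H}\cap H_{P_i}$. The difference lies in how the coordination step is implemented. The paper defines $\hat{H}_P:=K_P\cap\bigcap_{g,P'}g\dot{H}'_{P'}g^{-1}$ directly at each portal, intersecting over \emph{all} teleports into $P$; the equivariance (\ref{hatHPconjugate}) then follows formally from the groupoid structure of teleports (Corollary \ref{compatequiv}) and normality of $K$, and finiteness of index uses the uniform bound on $|G_P:g\dot{H}'_{P'}g^{-1}|$. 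You instead pull all the $\dot{H}_{P_s}$ back to a single base portal $P_0$, intersect finitely many slot representatives, and take the normal core $\hat{J}_{[P]}$ in the full setwise stabiliser $T=\mathrm{Stab}_G(P_0)$; equivariance under arbitrary transporters (not just teleports) then comes for free from normality in $T$, which cleanly absorbs both the $H$-action within a triple and the teleports between triples. Both routes work; yours avoids reasoning about a possibly infinite family of conjugates, while the paper's additionally arranges $\hat{H}_P\leq K_P$ (by including the factor $K_P$), a containment your construction does not provide but which the lemma as stated does not require.
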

\begin{proof}
To each group $H$ we apply Theorem \ref{thm:command} with respect to the stabilisers of a set of $H$-orbit representatives of portals leading to $Z$. Note that these stabilisers form a malnormal collection by Lemma \ref{acylindrical}. This gives us finite index subgroups $\dot{H}_P< H_P$ for every portal $P$ leading to $Z$, such that $h\dot{H}_P h^{-1}=\dot{H}_{hP}$ for $h\in H$.

Recall from Section \ref{sec:makefinite} the group $K\triangleleft G$ with finite wall quotients. For each portal $P$, define the group
\begin{equation}\label{hatHP}
\hat{H}_P:=K_P\cap\bigcap_{g,P'}g\dot{H}'_{P'}g^{-1},
\end{equation}
where we range over all $g,P'$ such that $P$ is a $g$-teleport of $P'$. This is an intersection of subgroups of $G_P$ that all act cocompactly on $P$ ($K_P$ acts cocompactly by Lemma \ref{partialjcovers}), so they all have finite index in $G_P$.
Moreover, there are finitely many triples in $\mathcal{V}_j$, so there is a uniform bound on the size of the quotients $P'/\dot{H}'_{P'}$, and hence a uniform bound on the indices $|G_P:g\dot{H}'_{P'}g^{-1}|$. This means that $\hat{H}_P$ has finite index in $G_P$. Also note that $1\dot{H}_P 1^{-1}=\dot{H}_P$ is one of the terms in the intersection, so $\hat{H}_P$ is a finite index subgroup of $\dot{H}_P$. Equation (\ref{hatHPconjugate}) follows from (\ref{hatHP}), Corollary \ref{compatequiv} and the normality of $K$ in $G$; in particular this implies $\hat{H}_P\triangleleft H_P$.

Finally, we apply Theorem \ref{thm:command} to obtain finite index subgroups $\hat{H} \triangleleft H$ for each triple in $(Z,H,(c_x))\in\mathcal{V}_j$, such that the $\hat{H}$-stabilisers of portals leading to $Z$ are indeed the subgroups $\hat{H}_P$ defined by (\ref{hatHP}).
\end{proof}

Replacing each $(Z,H,(c_x))$ by $(Z,\hat{H},(c_x))$ would preserve all the properties of $\mathcal{V}_j$ except the Gluing Equations. $(Z,\hat{H},(c_x))$ would contribute $|H:\hat{H}|$ times more to each set $\mathcal{V}_j^\pm (f,c)$ than $(Z,H,(c_x))$ does. But making $|H:\hat{H}|$ copies of $(Z,H,(c_x))$ in $\mathcal{V}_j$ would have the same effect. Therefore, for each triple $(Z,H,(c_x))\in\mathcal{V}_j$, we can replace $(Z,H,(c_x))$ by some number of copies of $(Z,\hat{H},(c_x))$ such that the Gluing Equations are preserved. To simplify notation we will not write $\hat{H}$ for the rest of the section, we will instead assume that the triples in $\mathcal{V}_j$ already satisfy
\begin{equation}\label{HPconjugate}
H_P=gH'_{P'} g^{-1}
\end{equation}
whenever portals $P$ and $P'$ lead to $(Z,H,(c_x)),(Z',H',(c'_x))\in\mathcal{V}_j$, with $P$ a $g$-teleport of $P'$.

\begin{defn}($[P]^+$ and $[P]^-$ -compatible portals)\\
Let $P$ be a portal leading to $Z\in\mathcal{V}_j$, and suppose it lies in a wall $W$. For $P'$ a portal leading to $Z'\in\mathcal{V}_j$ that is compatible with $P$, choose $g\in G$ such that $P$ is a $g$-teleport of $P'$. We say that $P'$ is a $[P]^+$\emph{-compatible portal} if $gZ'\cap W^+\neq\emptyset$ and a $[P]^-$\emph{-compatible portal} if $gZ'\cap W^-\neq\emptyset$ (recall from Proposition \ref{1or2sides} that $W^\pm$ denote the sets of vertices in $N(W)$ on either side of $W$). By Remark \ref{noswap} the labelling $W^\pm$ is $G$-equivariant, so the definition of $[P]^\pm$-compatible portal is independent of the choice of teleport $g$ and of the choice of portal $P$ within the compatibility class.
\end{defn}

If $P$ is a $[P]^+$-compatible portal leading to $(Z,H,(c_x))$ and $P'$ is a $[P]^-$-compatible portal leading to $(Z',H',(c'_x))$, and if $P$ is a $g$-teleport of $P'$, then $Z$ and $gZ'$ lie on opposite sides of $P$ (i.e. $Z\cap gZ'=P$), so we can glue them together to form a larger (convex) subspace of $X$, as illustrated below. Moreover, we can define local colouring data for this subspace by combining $(c_x)$ with the $g$-translate of $(c'_x)$, and these colourings will be compatible on the edges dual to $P$ (in the sense of property (1) of $\mathcal{V}_j$) by Lemma \ref{teleport}.

\begin{figure}[H]
	\definecolor{RED}{HTML}{FF2A2A}
	\definecolor{BLUE}{HTML}{0000FF}
	\centering
	\begin{overpic}[width=.55\textwidth,clip=true]{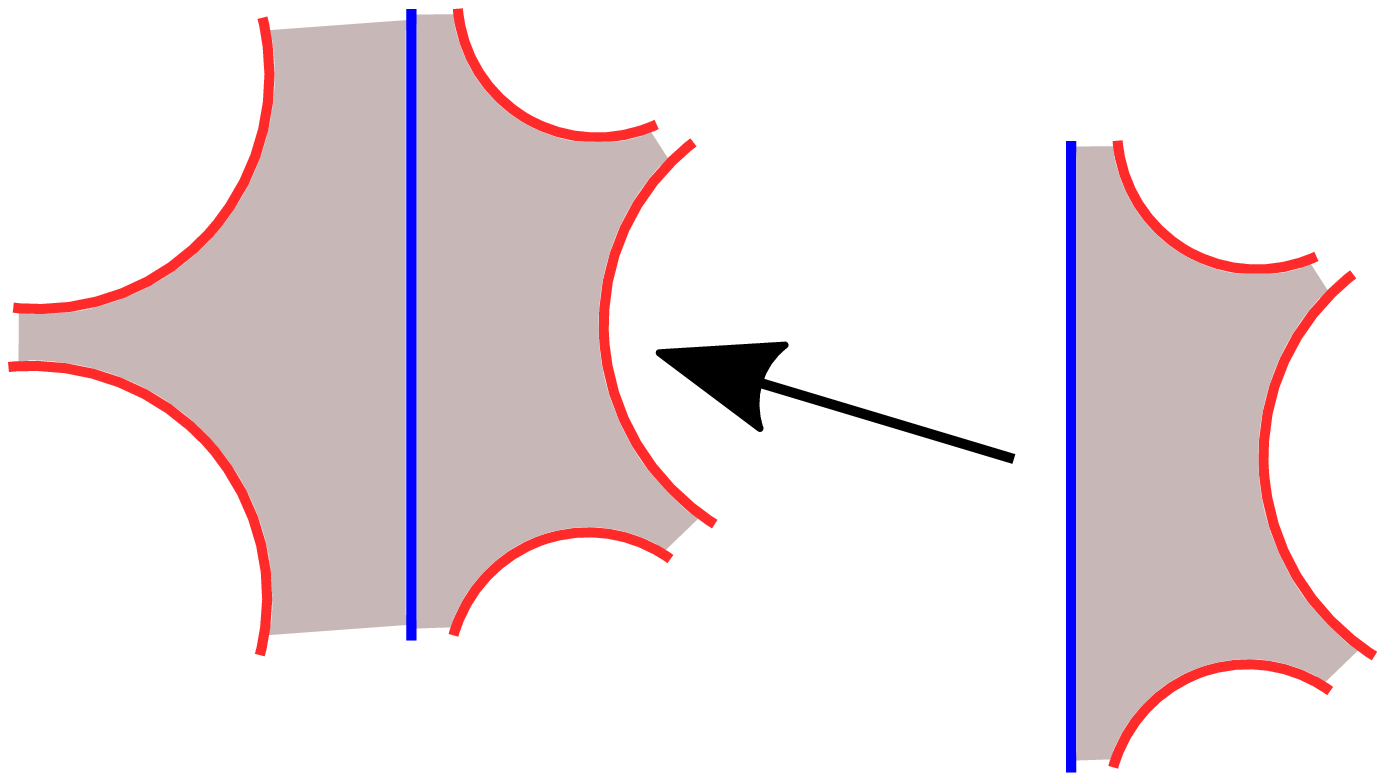}
		\put(32,23){\Huge\textcolor{BLUE}{$P$}}
		\put(68,10){\Huge\textcolor{BLUE}{$P'$}}
		\put(18,30){\Huge$Z$}
		\put(80,20){\Huge$Z'$}
		\put(62,30){\Huge$g$}	
	\end{overpic}
\end{figure}

By (\ref{HPconjugate}) this gluing also descends to the quotients via the following commutative diagram.
\begin{equation}\label{descendglue}
\begin{tikzcd}[
ar symbol/.style = {draw=none,"#1" description,sloped},
isomorphic/.style = {ar symbol={\cong}},
equals/.style = {ar symbol={=}},
subset/.style = {ar symbol={\subset}}
]
P\ar{d}&P'\ar{l}[swap]{g}\ar{d}\\
P/H_P&P'/H'_{P'}\ar{l}[swap]{\bar{g}}
\end{tikzcd}
\end{equation}
Thus $Z/H$ is glued to $Z'/H'$ along $P/H_P\overset{\bar{g}}{\cong} P'/H'_{P'}$. We want to glue up all the portal quotients in pairs, so we need the following definition and lemma.

\begin{defn}(Portal quotients)\\
	For a portal $P$ leading to $(Z,H,(c_x))\in\mathcal{V}_j$ we have a \emph{portal quotient} $P/H_P\xhookrightarrow{} Z/H$ (we consider this as a subspace of $Z/H$, so portals in the same $H$-orbit define the same portal quotient). We define the \emph{size} of $P/H_P$ as
	\begin{align*}
	\text{size}( P/H_P):=|\{H_P\cdot e\,|\,e\in E(X)\text{ is dual to }P\}|.
	\end{align*}
	By (\ref{HPconjugate}) compatible portals have quotients of the same size.
	We say that a portal quotient $P'/H'_{P'}$ is a \emph{$[P]^\pm$-compatible portal quotient} if $P'$ is a $[P]^\pm$-compatible portal.
\end{defn}

\begin{lem}\label{lem:matching}
	For each compatibility class $[P]$, the number of $[P]^+$-compatible portal quotients equals the number of $[P]^-$-compatible portal quotients.
\end{lem}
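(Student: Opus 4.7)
The plan is to identify the cardinalities $|\mathcal{V}_j^\pm(f,c)|$, for a well-chosen pair $(f,c)$ attached to the compatibility class $[P]$, with the quantity (common size of portal quotients in $[P]$) times (number of $[P]^\pm$-compatible portal quotients), then invoke the Gluing Equations (property (3) of $\mathcal{V}_j$).

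First I would fix a representative portal $P_0\in[P]$ lying in a $j$-boundary wall $W$ of some $(Z_0,H_0,(c^0_x))\in\mathcal{V}_j$, choose an edge $f$ dual to $P_0$, and set $c:=c^0_f$; by the Zipping Lemma, $c(f)=j$. I would then show that a pair $(H'\cdot e',Z')$ belongs to $\mathcal{V}_j^+(f,c)$ exactly when $e'$ is dual to a $[P]^+$-compatible portal $P'$ of $Z'$. The condition $g[e',c'_{e'}]=[f,c]$ forces $c'_{e'}(e')=j$, which by property (2) of $\mathcal{V}_j$ ensures that $e'$ is dual to a $j$-boundary wall of $Z'$, and Lemma \ref{teleport} places the corresponding portal in $[P]$. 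The side condition $gZ'\cap W(f)^+\neq\emptyset$ rewrites via the $G$-equivariance of $W^\pm$ (Remark \ref{noswap}) as $Z'\cap W(P')^+\neq\emptyset$, i.e.\ as $[P]^+$-compatibility of $P'$.

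Next, for a fixed $[P]^+$-compatible portal $P'$ of $Z'$, I would count the $H'$-orbits of edges dual to $P'$ appearing in $\mathcal{V}_j^+(f,c)$. Because distinct portals of $Z'$ lie in distinct walls by Lemma \ref{disjointboundary}, any $h\in H'$ carrying one edge dual to $P'$ to another such edge must fix $P'$ setwise, so these orbits are precisely the $H'_{P'}$-orbits of edges dual to $P'$; their number is $\text{size}(P'/H'_{P'})$. Summing over all $[P]^+$-compatible portal quotients (portals in the same $H'$-orbit yielding the same quotient), and using the remark preceding the lemma that compatible portal quotients share a common size $s$, I obtain $|\mathcal{V}_j^+(f,c)|=s\,n^+$, where $n^+$ is the number of $[P]^+$-compatible portal quotients. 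The identical argument gives $|\mathcal{V}_j^-(f,c)|=s\,n^-$, so the Gluing Equations $|\mathcal{V}_j^+(f,c)|=|\mathcal{V}_j^-(f,c)|$ combined with $s\geq 1$ force $n^+=n^-$.

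The main obstacle is the middle step's bookkeeping: refactoring the edge-orbit count as (number of portal quotients)$\,\times\,$(common size). This factorization genuinely depends on the uniformity guaranteed by Lemma \ref{lem:portalisom} and the subsequent standing assumption (\ref{HPconjugate}); without that normalization, different portals within $[P]$ could a priori have non-isomorphic quotients of different sizes, and the counts on the two sides would no longer be related by a single multiplicative constant.
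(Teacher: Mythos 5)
Your overall strategy coincides with the paper's: identify the two sides of a Gluing Equation with (a multiple of) the number of $[P]^{\pm}$-compatible portal quotients and cancel the common factor. However, the middle step contains a genuine error. A pair $(H'\cdot e',Z')$ belongs to $\mathcal{V}_j^{+}(f,c)$ only if $g[e',c'_{e'}]=[f,c]$ for some $g\in G$, i.e.\ only if $[e',c'_{e'}]\in G\cdot[f,c]$. Lemma \ref{teleport} says that the \emph{set} of classes $\{[e',c'_{e'}]\}$ attached to a compatible portal $P'$ is a $G$-translate of the set attached to $P_0$; it does \emph{not} say that every edge dual to $P'$ has its class in the single orbit $G\cdot[f,c]$. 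The edges dual to $P_0$ generally fall into several distinct $G$-orbits of classes $[e,c_e]$ (they need not even be $G$-equivalent as edges), so for a fixed $f$ only some of the $H'_{P'}$-orbits of edges dual to $P'$ contribute to $\mathcal{V}_j^{+}(f,c)$. Hence your ``exactly when'' characterization fails in the ``when'' direction, and the identity $|\mathcal{V}_j^{+}(f,c)|=s\,n^{+}$ with $s=\mathrm{size}(P'/H'_{P'})$ overcounts.

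The argument can be repaired in two ways, and you supply neither. The paper's route is to take the union $\bigcup_{f}\mathcal{V}_j^{\pm}(f,c)$ over \emph{all} edges $f$ dual to $P_0$: this union is exactly the set of pairs $(H\cdot e,Z)$ with $e$ dual to a $[P_0]^{\pm}$-compatible portal, and it decomposes as a disjoint union indexed by representatives of the finitely many distinct orbits $G\cdot[f,c]$; applying the Gluing Equation to each piece gives equality of total sizes, after which the common size is divided out. Alternatively, one can keep a single $f$ but replace your $s$ by $s_f$, the number of $H^0_{P_0}$-orbits of edges $e$ dual to $P_0$ with $[e,c_e]\in G\cdot[f,c]$: by Lemma \ref{teleport} and the normalization (\ref{HPconjugate}) following Lemma \ref{lem:portalisom}, this constant is the same for every portal in $[P]$, on either side, and satisfies $s_f\geq 1$ since the orbit of $f$ itself is counted; then $|\mathcal{V}_j^{\pm}(f,c)|=s_f\,n^{\pm}$ and the conclusion follows. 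As written, though, the key counting identity is false and the proof has a gap.
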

\begin{proof}
	Fix a portal $P_0$ leading to a triple $(Z_0,H^0,(c^0_x))$, and contained in a wall $W$, and let $[c]_W$ be the associated colouring equivalence class (i.e. $c\in C_{k+1}(\Gamma)$ satisfies $[c]_W=[c^0_f]_W$ for any edge $f$ dual to $P_0$, possible by the Zipping Lemma). The idea is to show that the total size of $[P_0]^+$-compatible portal quotients equals the total size of $[P_0]^-$-compatible portal quotients - the lemma then follows since all portals in the class $[P_0]$ have quotients of the same size. The key is to prove the following equality:
	\begin{align}\label{bigequation}
	\Set{(H\cdot e,Z)\,|\,\begin{aligned}\text{$e\in E(X)$ dual to a $[P_0]^\pm$-compatible}\\
	\text{portal $P$ leading to }(Z,H,(c_x))\in\mathcal{V}_j\end{aligned}}=\bigcup_{f\text{ dual to }P_0}\mathcal{V}_j^\pm(f,c)
	\end{align} 
	\begin{itemize}
		\item Firstly let's check the inclusion $\subset$. Given $(H\cdot e,Z)$ on the LHS of (\ref{bigequation}), if $P_0$ is a $g$-teleport of $P$, then $f:=ge$ is dual to $P_0$ and $[ge,gc_e]=[f,c]$ by Lemma \ref{teleport}. Since $P$ is a $[P_0]^\pm$-compatible portal, we also have $gZ\cap W(f)^\pm\neq\emptyset$, so $(H\cdot e,Z)\in \mathcal{V}_j^\pm(f,c)$ lies on the RHS of (\ref{bigequation}).
		\item For the other inclusion, suppose $(H\cdot e, Z)\in\mathcal{V}_j^\pm(f,c)$ for some edge $f$ dual to $P_0$. Then by definition there is $g\in G$ with $gZ\cap W(f)^\pm\neq\emptyset$ and $g[e,c_e]=[f,c]$. We have $c_e(e)=c(f)=j$, so $e$ is dual to some portal $P$, and $P_0$ is compatible with $P$ by Lemma \ref{teleport}. This implies that $(H\cdot e, Z)$ is on the LHS of (\ref{bigequation}) as required.
	\end{itemize}
Note that $\mathcal{V}_j^\pm(f,c)$ only depends on $\pm$ and the $G$-orbit of the equivalence class $[f,c]$.
It also follows straight from the definition of $\mathcal{V}_j^\pm(f,c)$ that if edges $f_1,f_2$ are dual to $P_0$ with $[f_1,c]\notin G\cdot[f_2,c]$, then $\mathcal{V}_j^\pm(f_1,c)\cap\mathcal{V}_j^\pm(f_2,c)=\emptyset$.
Thus we can write the RHS of (\ref{bigequation}) as a disjoint union by indexing over a finite set of edges $f$ with distinct orbits $G\cdot[f,c]$. Whether we use $+$ or $-$, the Gluing Equations tell us that each set $\mathcal{V}_j^\pm(f,c)$ will be the same size, hence the RHS of (\ref{bigequation}) will also be the same size. In turn, the LHS of (\ref{bigequation}) has the same size for $+$ and $-$, which shows that the total size of $[P_0]^+$-compatible portal quotients equals the total size of $[P_0]^-$-compatible portal quotients, as required.
\end{proof}

\begin{defn}(Graph of spaces $\mathcal{Y}$)\\\label{defn:mathcalY}
For each compatibility class $[P]$ we now choose a matching between the $[P]^+$-compatible portal quotients and $[P]^-$-compatible portal quotients (possible by Lemma \ref{lem:matching}). Define a graph of spaces $\mathcal{Y}$ as follows:
\begin{itemize}
	\item The vertex spaces in $\mathcal{Y}$ are the spaces $Z/H$ for $(Z,H,(c_x))\in\mathcal{V}_j$.
	\item The edge spaces are the portal quotients $P/H_P\xhookrightarrow{} Z/H$ (and we make a choice of portal $P$ from each $H$-orbit of portals leading to $Z$).
	\item For each pair of portal quotients $(P/H_P,P'/H'_{P'})$ from the matching (which we will refer to as an \emph{edge space pair}) we choose $g_P\in G$ such that $P$ is a $g_P$-teleport of $P'$, and glue the portal quotients together using the map $\bar{g}_P:P'/H'_{P'}\to P/H_P$ from (\ref{descendglue}) (and we choose $g_{P'}=g_P^{-1}$).
\end{itemize}
\end{defn}

Now let $Y$ be a connected component of $\mathcal{Y}$. We want to show that $\pi_1(Y)\in\mathcal{QVH}$, but first we must show that it is hyperbolic.
This will rely on (a special case of) the Bestvina-Feighn Combination Theorem \cite[first corollary in \S7]{comb}, albeit stated in slightly more modern language here (see also \cite{Kapovich}).

\begin{thm}(Bestvina-Feighn Combination Theorem)\\\label{comb}
	A graph of groups has hyperbolic fundamental group if it satisfies the following:
	\begin{itemize}
		\item The vertex groups are hyperbolic.
		\item The inclusions of edge groups into vertex groups are quasi-isometric embeddings.
		\item For each vertex group, the collection of incident edge groups is an almost malnormal family (see Definition \ref{defn:almostMalnormal}).
	\end{itemize}
\end{thm}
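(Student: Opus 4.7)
The plan is to realise the graph of groups as a graph of hyperbolic metric spaces $\mathcal{X}$, pass to the universal cover $\widetilde{\mathcal{X}}$ (which is a tree of hyperbolic spaces over the Bass--Serre tree $T$), and apply Bestvina and Feighn's general combination theorem: a tree of hyperbolic spaces in which all edge spaces are quasi-isometrically embedded is itself hyperbolic if and only if it satisfies the \emph{annuli flare condition} (also called the hallways-flare condition). Combined with \v{S}varc--Milnor, hyperbolicity of $\widetilde{\mathcal{X}}$ will immediately give hyperbolicity of $\pi_1(\mathcal{X})$, which is the fundamental group of the graph of groups.

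Concretely, first I would build the model: take each vertex space $X_v$ to be a Cayley graph of the vertex group (hyperbolic by hypothesis), each edge space $X_e$ to be a Cayley graph of the edge group, and attach $X_e \times [0,1]$ to the two adjacent vertex spaces via the quasi-isometric inclusions coming from the hypothesis on edge-group inclusions. The universal cover $\widetilde{\mathcal{X}}$ then has the required tree-of-spaces structure, and the graph of groups acts on it properly and cocompactly. By \v{S}varc--Milnor it suffices to show $\widetilde{\mathcal{X}}$ is hyperbolic, so everything reduces to verifying annuli flaring.

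The main obstacle is exactly this last step. Annuli flaring asks that any sufficiently long ``hallway'' of uniform-quality quasi-geodesics in consecutive vertex spaces, joined along edge spaces, must at least double in length near one end. The two key inputs are: (i) in a $\delta$-hyperbolic space, two quasi-convex subsets whose coarse intersection is bounded have neighbourhoods that diverge exponentially once one leaves a bounded region; and (ii) almost malnormality, together with quasi-convexity of edge groups inside vertex groups (which for hyperbolic groups is equivalent to QI embedding), ensures that whenever a hallway passes through a vertex space along two different incident edge cosets --- or through the same coset after a non-trivial translation of the ambient vertex group --- the two relevant lifts have bounded coarse intersection. Plugging (i) into (ii) forces every hallway of sufficient length to flare, and the standard Bestvina--Feighn machinery then upgrades this to hyperbolicity of $\widetilde{\mathcal{X}}$, and hence to hyperbolicity of the fundamental group.
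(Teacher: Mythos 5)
This is a statement the paper does not prove at all: Theorem \ref{comb} is imported as a black box, cited to the first corollary in \S 7 of Bestvina--Feighn's paper with Kapovich's article given for the modern formulation, and it is this external result (not any argument in the paper) that gets applied in Lemma \ref{Yhyperbolic}. Your outline is, in substance, the standard derivation of exactly that corollary, so it is a reasonable roadmap rather than a competing proof: build the tree of hyperbolic spaces over the Bass--Serre tree, reduce to hyperbolicity of the total space via \v{S}varc--Milnor, and verify the flaring hypothesis of the general combination theorem using the fact that almost malnormality plus quasi-convexity (equivalent to q.i.\ embedding for subgroups of hyperbolic groups) forces distinct incident edge cosets at a vertex to have bounded coarse intersection. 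Two caveats. First, you conflate the \emph{annuli flare} condition with the \emph{hallways flare} condition; these are distinct (the content of Bestvina--Feighn's \S 7 corollary is precisely that the former, a condition on conjugation bands in the graph of groups, implies the latter, a condition on hallways in the tree of spaces), and in the almost malnormal setting the cleanest route is to observe that there are no long essential annuli at all, so annuli flaring holds vacuously. Second, the two steps you describe as ``standard machinery'' --- the exponential-divergence estimate and the upgrade from flaring to hyperbolicity --- constitute essentially all of the work in Bestvina--Feighn and Kapovich; as written your proposal is a correct skeleton with those references doing the load-bearing, which is the same logical status the theorem has in the paper itself.
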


\begin{lem}\label{Yhyperbolic}
$\pi_1(Y)$ is hyperbolic.
\end{lem}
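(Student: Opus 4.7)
The plan is to apply the Bestvina-Feighn Combination Theorem (Theorem \ref{comb}) to the graph of groups underlying $Y$. For this to make sense I first need to know that $Y$ really does correspond to a (finite) graph of groups with the expected vertex and edge groups. Each vertex space $Z/H$ has universal cover $Z$, which is a convex subspace of the CAT(0) complex $X$, hence contractible, and $H$ acts freely, so $Z/H$ is a $K(H,1)$. Similarly, each portal $P$ is the intersection of two convex subspaces of $X$ (namely $Z$ and a wall), so is CAT(0), hence contractible, and $H_P$ acts freely on it, so each edge space $P/H_P$ is a $K(H_P,1)$. Since $\mathcal{V}_j$ is finite (as the ingredients from Lemma \ref{Vk+1} and the subsequent constructions give finitely many $G$-orbits of triples), $Y$ is a finite graph of $K(\pi,1)$'s, so $\pi_1(Y)$ is indeed the fundamental group of a graph of groups with vertex groups the various $H$ and edge groups the portal stabilisers $H_P$ (identified via the isomorphisms $\bar g_P$ of (\ref{descendglue})).

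Next I verify the three hypotheses of Theorem \ref{comb}. The vertex groups are hyperbolic because by property (4) each $H \in \mathcal{QVH}$, and every group in $\mathcal{QVH}$ is hyperbolic by Definition \ref{QVH}. For the quasi-isometric embedding condition, I note that $P/H_P \hookrightarrow Z/H$ is a closed embedding into a compact space (distinct $H$-translates of $P$ are disjoint by Lemma \ref{disjointboundary}), so $P/H_P$ is compact; hence $H_P$ acts freely and cocompactly on $P$. Švarc--Milnor then gives that for a basepoint $z_0\in P$ the orbit maps $H \to Z$ and $H_P \to P$ are quasi-isometries. Because $P$ and $Z$ are both convex in $X$ with $P\subset Z$, the inclusion $P \hookrightarrow Z$ is an isometric embedding, so composing shows that $H_P \hookrightarrow H$ is a quasi-isometric embedding.

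For the almost malnormality condition, choose a set of $H$-orbit representatives of portals leading to $Z$, say $P_1,\dots,P_n$ (these correspond to the edge spaces incident at the vertex $Z/H$ in Definition \ref{defn:mathcalY}). The incident edge subgroups of $H$ are $H_{P_1},\dots,H_{P_n}$. Suppose $hH_{P_i}h^{-1}\cap H_{P_j}$ is non-trivial for some $h\in H$. Since $hH_{P_i}h^{-1} = H_{hP_i}$, Lemma \ref{acylindrical} forces the portals $hP_i$ and $P_j$ to coincide; as $P_i$ and $P_j$ are distinct $H$-orbit representatives this yields $i=j$ and $h\in H_{P_i}$. Thus the family is in fact malnormal (in particular almost malnormal), and Theorem \ref{comb} applies to conclude $\pi_1(Y)$ is hyperbolic.

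The main technical point in carrying out this plan is not any single hypothesis but rather bookkeeping: making sure that the edge groups of the graph of spaces (taken with multiplicities and with loop edges at a single vertex space) are precisely the portal stabilisers, so that Lemma \ref{acylindrical} really does deliver the malnormality of the full family of incident edge subgroups. Once this is set up correctly, the acylindricity statement of Lemma \ref{acylindrical} does all the serious work for condition (3), and the quasi-isometric embedding condition is essentially a consequence of $P \subset Z$ being convex.
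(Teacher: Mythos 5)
Your proof is correct and follows essentially the same route as the paper: both verify the three hypotheses of the Bestvina--Feighn Combination Theorem using the free cocompact actions on the convex subspaces $Z$ and $P\subset X$ for hyperbolicity and the quasi-isometric embedding condition, and Lemma \ref{acylindrical} for (almost) malnormality of the incident edge groups. The extra bookkeeping you supply about $Y$ being a finite graph of $K(\pi,1)$'s, and your alternative derivation of vertex-group hyperbolicity from membership in $\mathcal{QVH}$, are fine but do not change the argument in any essential way.
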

\begin{proof}
 We just need to check that the conditions of Theorem \ref{comb} are satisfied for the graph of groups corresponding to $Y$.
  
 \begin{itemize}
 	\item The vertex groups come from actions of $H$ on $Z$, which are free cocompact actions on hyperbolic cube complexes.
 	\item The map $P/H_P\xhookrightarrow{} Z/H$ from edge space to vertex space lifts to the inclusion $P\xhookrightarrow{} Z$ of universal covers, which is an isometric embedding, hence the corresponding inclusion of groups $H_P\xhookrightarrow{} H$ will be a quasi-isometric embedding.
 	\item For $(Z,H,(c_x))\in\mathcal{V}_j$, the collection of portal stabilisers $(H_P)$, indexed over some set of $H$-orbit representatives of portals leading to $Z$, is malnormal in $H$ because of Lemma \ref{acylindrical}.
 \end{itemize}
\end{proof}

\begin{lem}\label{YQVH}
$\pi_1(Y)\in\mathcal{QVH}$.
\end{lem}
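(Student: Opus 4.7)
The plan is to argue by induction on the number of edges in the underlying graph of $Y$, which is finite since $\mathcal{V}_j$ contains only finitely many triples and each vertex space $Z/H$ admits only finitely many portal quotients (portals leading to $Z$ are in bijection with $H$-orbits of $j$-boundary walls, of which there are finitely many by cocompactness). For the base case of no edges, $Y$ is a single vertex space $Z/H$ and $\pi_1(Y) = H \in \mathcal{QVH}$ by property (4) of $\mathcal{V}_j$.

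For the inductive step, I would delete a single edge $\epsilon$ from the underlying graph, producing a smaller graph of spaces $Y'$. The preparatory observation is that $Y'$ still satisfies the three bullets in the proof of Lemma \ref{Yhyperbolic}: vertex groups are hyperbolic, edge group inclusions are QI-embeddings (since portals are convex subspaces of $Z$), and at each vertex the incident edge groups form an almost malnormal family (inherited via Lemma \ref{acylindrical}). Hence by Theorem \ref{comb} each component of $Y'$ has hyperbolic fundamental group, and by the induction hypothesis those fundamental groups lie in $\mathcal{QVH}$. Depending on whether $\epsilon$ separates the component it lies in, $\pi_1(Y)$ is then either an amalgamated product $\pi_1(Y_1) *_{H_P} \pi_1(Y_2)$ (operation 2 of Definition \ref{QVH}) or an HNN extension $\pi_1(Y') *_{H_P}$ (operation 3), where $H_P$ is the stabiliser of the portal corresponding to $\epsilon$ (and the edge group in the topological sense is $\pi_1(P/H_P) = H_P$, since $P$ is convex and hence simply connected).

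To apply the closure operation, I need $H_P$ to be finitely generated and quasi-convex in $\pi_1(Y)$. Finite generation is immediate because $H_P$ acts freely and cocompactly on the convex (and so geodesic) subspace $P$. For quasi-convexity I would use that $\pi_1(Y)$ is hyperbolic (Lemma \ref{Yhyperbolic}), so it suffices to show that $H_P \hookrightarrow \pi_1(Y)$ is a QI-embedding. Factor this as $H_P \hookrightarrow H \hookrightarrow \pi_1(Y)$; the first map is a QI-embedding by the edge-in-vertex argument already used in the proof of Lemma \ref{Yhyperbolic}, while the second is a QI-embedding by the standard consequence of the Bestvina--Feighn combination theorem that, under its hypotheses, vertex subgroups of a finite graph of hyperbolic groups are quasi-convex in the fundamental group of the graph of groups.

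The main obstacle is this last quasi-convexity claim for $H_P$ in the full group $\pi_1(Y)$; everything else is bookkeeping built out of Lemmas \ref{acylindrical}, \ref{Yhyperbolic} and the inductive hypothesis. With these pieces assembled, the appropriate closure operation from Definition \ref{QVH} yields $\pi_1(Y) \in \mathcal{QVH}$ and closes the induction.
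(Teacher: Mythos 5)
Your proposal is correct and is essentially the paper's argument spelled out in full: the paper's proof is a three-line verification (hyperbolicity from Lemma \ref{Yhyperbolic}, vertex groups in $\mathcal{QVH}$ by property (4) of $\mathcal{V}_j$, edge groups quasi-convex in $\pi_1(Y)$), with the induction over edges of the graph left implicit. The quasi-convexity of $H_P$ in $\pi_1(Y)$ that you identify as the main obstacle is exactly the point the paper handles by citing \cite[Theorem 1.2]{Kapovich}, which is the precise reference for the ``standard consequence of Bestvina--Feighn'' you invoke.
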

\begin{proof}
 We have already shown that $\pi_1(Y)$ is hyperbolic; and it has vertex groups $H\in\mathcal{QVH}$ by property (4) of $\mathcal{V}_j$. Finally, the edge groups $H_P$ are quasi-convex in $\pi_1(Y)$ by \cite[Theorem 1.2]{Kapovich}.
\end{proof}

The next step is to embed the universal cover $\tilde{Y}$ of $Y$ into $X$, such that the group of deck transformations of $\tilde{Y}\to Y$ is a subgroup of $G$. We can then complete the inductive step of the gluing construction by promoting $\tilde{Y}$ to a triple in $\mathcal{V}_{j-1}$. The definition of the vertex and edge spaces of $Y$ means that we already have an immersion $Y\to X/G$, so one might think that we can use standard covering space theory to lift this to a map of universal covers $\tilde{Y}\to X$ that is equivariant with respect to a certain subgroup of $G$. Indeed this would follow if the action of $G$ on $X$ were free, but since $G$ might contain torsion the quotient $X/G$ is actually an orbi-complex, which makes the required covering space arguments more subtle. In the following definition and lemma we give an explicit construction of the map $\tilde{Y}\to X$ that does not assume any knowledge of orbi-complex covering space theory. We do this by first constructing $\tilde{Y}$ as a subspace of $X$, and then proving that we have a universal covering $\mu:\tilde{Y}\to Y$.

\begin{defn}\label{tildeTX}($\tilde{Y}$ as a subspace of $X$)\\
Fix $Z_0/H^0$ a vertex space of $Y$ to act as a base vertex space. For $0\leq i\leq n$, suppose $Z_i/H^i$ are vertex spaces of $Y$, suppose $P_i, P'_i$ are portals leading to $Z_i, Z_{i+1}$ respectively such that $(P_i/H^i_{P_i},P'_i/H^{i+1}_{P'_i})$ is an edge space pair, and let $h_i\in H^i$. Then define
\begin{equation}\label{gi}
g_i:=h_0 g_{P_0} h_1 g_{P_1} \cdots g_{P_{i-1}} h_i \in G.
\end{equation}
$\tilde{Y}\subset X$ will be the union of all spaces $g_i Z_i\subset X$ for all choices $Z_i, P_i, h_i$ as above, and the restriction of the covering map $\mu:\tilde{Y}\to Y$ will be given by
 \begin{equation*}
 \mu: g_iZ_i\overset{g_i^{-1}}{\longrightarrow}Z_i \to Z_i/H^i\to Y.
 \end{equation*}
 \end{defn}

\begin{lem}\label{muwelldefined}
$\mu:\tilde{Y}\to Y$ as constructed above is a well-defined universal covering.
\end{lem}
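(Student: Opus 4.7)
The plan is to show that $\tilde Y$ carries a graph-of-spaces structure that realises the universal cover of $Y$ directly inside $X$. The strategy has four stages: verify that the pieces $g_iZ_i$ fit together in a controlled way so that both $\tilde Y$ and $\mu$ are well-defined; check that $\mu$ is a local homeomorphism, hence a covering; verify connectedness; and show simply-connectedness.

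First I would prove the key structural claim: if two pieces $g_iZ_i$ and $g'_jZ'_j$ arising from Definition \ref{tildeTX} meet, then either their defining sequences differ only in the $h_i$'s (so they define the same piece, with $\mu$-values agreeing because $\mu$ factors through the $H^i$-action), or their intersection is precisely a common translate of a portal along which the construction has glued them. This uses that every $Z_i$ is an intersection of half-spaces of $X$ (hence closed and convex), so two distinct translates can meet only along shared boundary walls; Lemma \ref{disjointboundary} together with the fact that the construction only identifies faces across edge-space pairs $(P/H_P,P'/H'_{P'})$ forces any overlap to be a translate of one of the chosen portals. On such an overlap, the commutative diagram (\ref{descendglue}) together with the conjugation identity (\ref{HPconjugate}) guarantees that the two values of $\mu$ agree. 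Well-definedness of $\tilde Y$ and $\mu$ then follows formally.

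For the covering property, let $y\in Y$ and $\tilde y\in\mu^{-1}(y)$. If $y$ lies in the interior of a vertex space $Z_i/H^i$, then $\tilde y$ is an interior point of some $g_iZ_i$, and near $\tilde y$ the map $\mu$ factors as $g_i^{-1}$ followed by the quotient $Z_i\to Z_i/H^i$, which is a covering because $H^i$ acts freely on $Z_i$. If $y$ lies on a portal quotient $P/H_P$, then near $\tilde y$ two adjacent pieces $g_iZ_i$ and $g_{i+1}Z_{i+1}$ are glued along the translate $g_iP_i$, and the local picture modulo the stabiliser $H^i_{P_i}=g_{P_i}H^{i+1}_{P'_i}g_{P_i}^{-1}$ matches the local picture of $Y$ at $y$. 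Connectedness of $\tilde Y$ is immediate, since every piece is joined to $Z_0$ through the chain of portal-crossings encoded by its defining sequence.

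The main obstacle will be simple-connectedness. My plan is to invoke Bass--Serre theory for the graph of groups $\mathsf{G}$ underlying $Y$, with vertex groups $H$ and edge groups $H_P$. Since each $Z_i$ and each portal $P$ is convex in $X$, hence contractible, a standard deformation argument identifies $\pi_1(Y)$ with $\pi_1(\mathsf{G})$. The structural claim above lets me match the defining sequences $(h_0,P_0,h_1,P_1,\ldots,h_i)$ bijectively with edge paths based at the root in the Bass--Serre tree $T$ of $\mathsf{G}$, with adjacency of pieces in $\tilde Y$ corresponding exactly to adjacency in the associated tree of spaces. Since $T$ is a tree and each vertex space is contractible, this tree of spaces is simply connected, hence so is $\tilde Y$. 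The delicate point here is showing that two formally distinct sequences which produce the same piece in $X$ must already be related by $H^i$-translations, so that the parametrisation by the Bass--Serre tree is exact rather than a quotient; this reduces to the free cocompact action of $H^i$ on the convex set $Z_i$ together with the disjointness of boundary walls from Lemma \ref{disjointboundary}.
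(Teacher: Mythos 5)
Your proposal is correct and follows essentially the same route as the paper: verify agreement of $\mu$ on overlaps via (\ref{descendglue}) and (\ref{HPconjugate}), show the pieces $g_iZ_i$ assemble into a tree of convex subspaces meeting only along portal translates (with Lemma \ref{disjointboundary} separating distinct branches into disjoint half-spaces), and deduce the universal covering property from this tree structure. Your Bass--Serre packaging of simple-connectedness is just a more formal phrasing of the paper's ``tree of simply connected spaces'' argument; the only imprecision is your opening dichotomy (sequences differing in the $h_i$'s need not give the same piece when the $h_i$ lie in different $H^i_{P_i}$-cosets), but you correctly flag and address this delicate point at the end.
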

\begin{proof}
Consider $Z_i, P_i, h_i$ for $0\leq i\leq n$ as in Definition \ref{tildeTX}. 
Firstly we will check that the map $\mu$ agrees on the intersection of $g_i Z_i$ and $g_{i+1} Z_{i+1}$. Indeed $g_{P_i}:P'_i\subset Z_{i+1}\to P_i\subset Z_i$ is a teleport between a pair of $[P_i]^+-$ and $[P_i]^--$ compatible portals, so we know that
\begin{align}\label{piecesintersect}
g_i Z_i\cap g_{i+1} Z_{i+1}&=g_i(Z_i\cap g_{P_i} h_{i+1} Z_{i+1})\\\nonumber
&=g_i P_i.
\end{align}
Then the two ways of defining the map $\mu: g_i P_i\to Y$ are given by the following commutative diagram, hence they agree.

\[
\begin{tikzcd}[
ar symbol/.style = {draw=none,"#1" description,sloped},
isomorphic/.style = {ar symbol={\cong}},
equals/.style = {ar symbol={=}},
subset/.style = {ar symbol={\subset}}
]
g_i Z_i & g_i P_i \ar[hook']{l}\ar[hook]{rrr}&&&g_{i+1} Z_{i+1}\\
Z_i \ar{u}{g_i}\ar{d} &P_i \ar{u}{g_i} \ar[hook']{l}\ar{d} &P'_i \ar{l}[swap]{g_{P_i}} \ar[hook]{r} \ar{d} &Z_{i+1} \ar{dr} &Z_{i+1} \ar{d} \ar{l}[swap]{h_{i+1}} \ar{u}{g_{i+1}}\\
Z_i/H^i \ar[bend right=10]{drr}&P_i/H^i_{P_i} \ar[hook']{l} \ar{dr} &P'_i/H^{i+1}_{P'_i} \ar{l}[swap]{\bar{g}_{P_i}} \ar[hook]{rr} \ar{d} && Z_{i+1}/H^{i+1} \ar[bend left=10]{dll}\\
&&Y
\end{tikzcd}
\]

Next we want to investigate the effect of changing $h_i$ to some other element $h'_i\in H$. Suppose this changes $g_i$ to $g'_i$ and $g_{i+1}$ to $g'_{i+1}$.
First note that $g_i Z_i=g'_i Z_i$, and that the map $\mu:g_i Z_i\to Y$ is the same whether defined with $g_i$ or $g'_i$. For $g'_{i+1}$ there are two cases to consider. If $h'_i$ is in the same $H^i_{P_i}$-coset as $h_i$, then (\ref{HPconjugate}) implies that we can write
$$h'_i g_{P_i} h_{i+1}=h_i g_{P_i} h'_{i+1},$$
for some $h'_{i+1}\in H^{i+1}$, so as before we have $g_{i+1} Z_{i+1}=g'_{i+1} Z_{i+1}$ and the map $\mu: g_{i+1} Z_{i+1}\to Y$ is the same either way. On the other hand, if $h'_i$ lies in a different $H^i_{P_i}$-coset than $h_i$, then $g_i P_i$ and $g'_i P_i$ will be distinct portals, and they will lie in disjoint $j$-boundary walls by Lemma \ref{disjointboundary}, hence $g_{i+1} Z_{i+1}$ and $g'_{i+1} Z_{i+1}$ will lie in disjoint half-spaces of $X$ as illustrated below ($j$-boundary wall translates in red). Similarly, if we change $P_i$ to a different $H^i$-orbit of portal leading to $Z_i$, then $g_{i+1} Z_{i+1}$ will be shifted into a third half-space, disjoint from the other two.

\begin{figure}[H]
	\centering
	\begin{overpic}[width=.5\textwidth,clip=true]{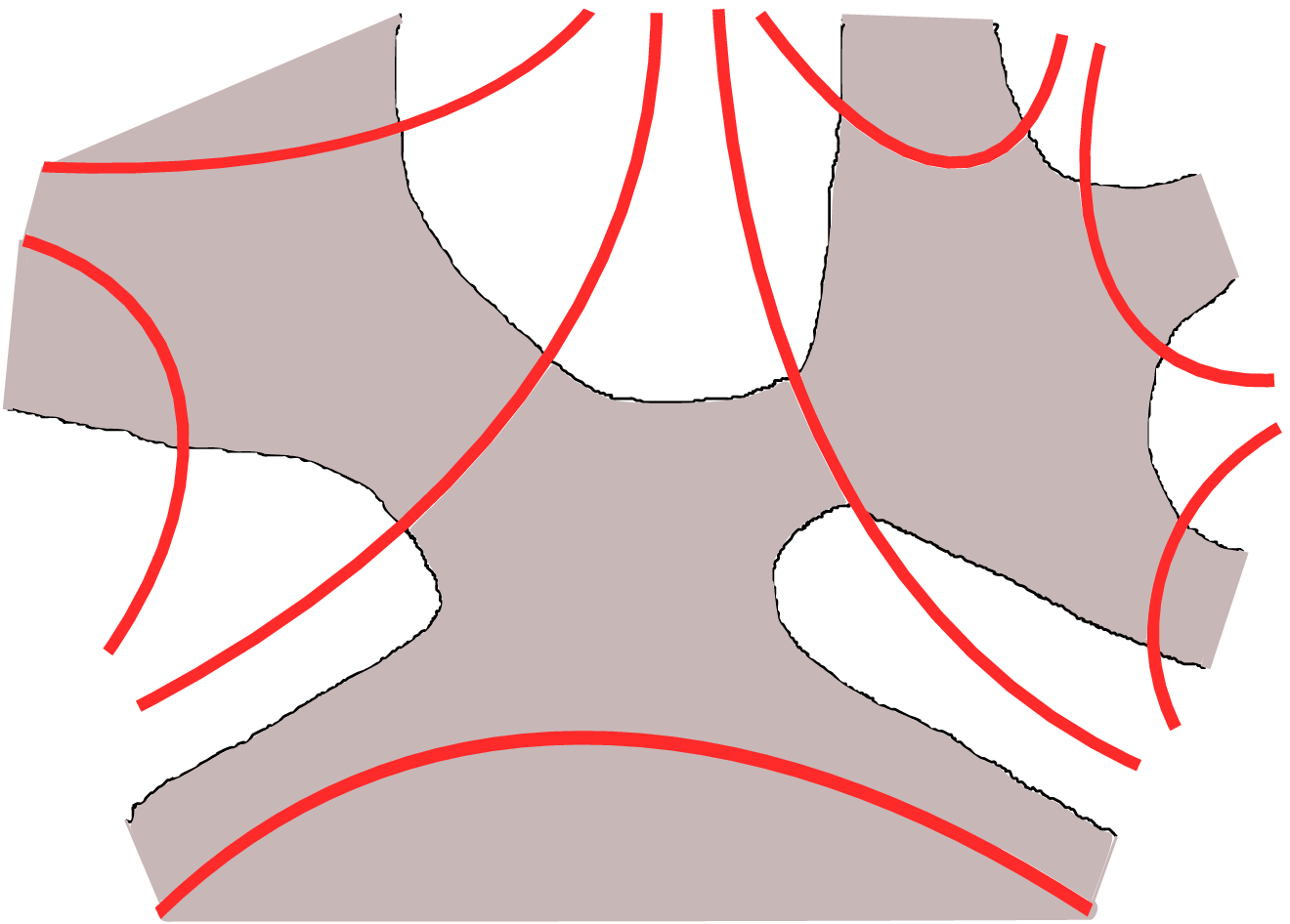}
		\put(14,45){\huge$g_{i+1} Z_{i+1}$}
		\put(42,25){\huge$g_i Z_i$}
		\put(63,42){\huge$g'_{i+1} Z_{i+1}$}	
	\end{overpic}
\end{figure}

The conclusion of this discussion is that the collection of all possible $g_i Z_i$ are arranged in a tree structure, with different $g_i Z_i$ intersecting only along portals as in (\ref{piecesintersect}), and that each map $\mu: g_i Z_i\to Y$ only depends on the space $g_i Z_i$. $\tilde{Y}$ is a tree of simply connected spaces, so is itself simply connected; and for $g_iP_i\subset g_i Z_i\cap g_{i+1} Z_{i+1}$ as above, we have a translate of $Z_{i+1}$ glued to $g_i Z_i$ along every coset $g_i h H^i_{P_i}$ for $h\in H^i$, so $\mu:\tilde{Y}\to Y$ really is a universal covering map.
\end{proof}

 We now promote $\tilde{Y}$ to a triple in $\mathcal{V}_{j-1}$ by adding the data of a group action and colourings. We  define these now, and afterwards we will verify the numbered properties of $\mathcal{V}_{j-1}$ given in Section 7.
 
 \begin{defn}(Group action and colourings for $\tilde{Y}$)\\
 
 Define the group
 \begin{equation*}
H(Y):=\{ g_i\,|\,g_i\text{ is an element of the form (\ref{gi}) with }Z_i=Z_0\in\mathcal{V}_j\}<G.
 \end{equation*}
 If $g_k$ is another element of the form (\ref{gi}), then $g_i g_k$ is also of the form (\ref{gi}), and so $g_i:g_k Z_k\to g_i g_k Z_k\subset\tilde{Y}$, hence we get $g_i\tilde{Y}\subset \tilde{Y}$. It is also clear that $\mu g_i=\mu$, so $H(Y)$ is a subgroup of the deck transformation group of $\mu:\tilde{Y}\to Y$. What's more it follows from the construction of $\mu$ and $\tilde{Y}$ that $H(Y)\cdot x=\mu^{-1}(\mu(x))$ for all $x\in\tilde{Y}$, hence $H(Y)$ is the full group of deck transformations, and acts freely cocompactly on $\tilde{Y}$.
 
To complete the triple we need to provide colourings $(c^Y_x)$ for every vertex in $\tilde{Y}$: if $x\in Z$ is a vertex and $(Z,H,(c_x))\in\mathcal{V}_j$ then endow each translate $g_ix$ with the colouring $c^Y_{g_ix}:=g_ic_x$. This defines a triple $(\tilde{Y}, H(Y),(c^Y_x))\in\mathcal{V}_{j-1}$. Doing this for all connected components $Y$ of $\mathcal{Y}$ defines the entire collection $\mathcal{V}_{j-1}$.
\end{defn}

We have already seen that $H(Y)$ acts freely cocompactly on $\tilde{Y}$, and it is clear from the definition that the colourings $(c^Y_x)$ are invariant under the action of $H(Y)$ - so in particular are well-defined. There is just one other thing we need to check before going on to the numbered properties of $\mathcal{V}_{j-1}$, and this is the following lemma.
\begin{lem}\label{inthalfspace}
$\tilde{Y}$ is an intersection of half-spaces in $X$.
\end{lem}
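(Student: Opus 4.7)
The plan is to realise $\tilde Y$ as the intersection of the family $\mathcal H$ of closed half-spaces of $X$ that contain every piece $g_iZ_i$ appearing in Definition~\ref{tildeTX}. The inclusion $\tilde Y\subseteq\bigcap\mathcal H$ is immediate, so the task is the reverse inclusion, which I would prove by taking $x\notin\tilde Y$ and producing a half-space in $\mathcal H$ that excludes $x$. I would pick a vertex $y_0\in\tilde Y$ and follow a shortest edge path $\gamma$ from $y_0$ to $x$, using Proposition~\ref{edgegeodesic} to ensure that $\gamma$ only crosses walls separating $y_0$ from $x$, and each exactly once.

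Tracking $\gamma$ piece by piece, whenever $\gamma$ lies in some piece $g_iZ_i$ and first exits along an edge $e$, property~(2) of $\mathcal V_j$ forces $W(e)$ to be a boundary wall of $g_iZ_i$, and the Zipping Lemma (Lemma~\ref{jbwallcolouring}) assigns it a well-defined colour $c\le j$. If $c=j$, then the matching of Lemma~\ref{lem:matching} glues $g_iZ_i$ at the portal $Q\subseteq W(e)$ to a neighbouring piece $g_{i'}Z_{i'}$; since the $g_iZ_i$-side endpoint of $e$ is incident to an edge dual to $Q$, the other endpoint of $e$ lies in $g_{i'}Z_{i'}\subseteq\tilde Y$, and $\gamma$ continues inside the next piece. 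If $c<j$, I claim $W(e)$ is a supporting wall of $\tilde Y$ (i.e. $\tilde Y$ lies in one of its closed half-spaces); since $\gamma$ crosses $W(e)$ exactly once, $x$ then lies on the opposite side of $W(e)$ from $\tilde Y$, giving the required half-space in $\mathcal H$.

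The crux is the claim that every boundary wall $W$ of some piece $g_iZ_i$ with colour $<j$ is a supporting wall of $\tilde Y$. I would prove this by induction along the tree of gluings from Lemma~\ref{muwelldefined}, reducing to the case of two tree-adjacent pieces $g_iZ_i$ and $g_{i'}Z_{i'}$ sharing a portal $Q$ in a $j$-boundary wall $W^*$; since $Q\subseteq g_iZ_i$ lies on the correct closed half-space of $W$, it suffices to show that $g_{i'}Z_{i'}$ never strays to the opposite side. This requires a colour-compatibility argument across the portal: combining Lemma~\ref{partialjcovers}, Lemma~\ref{teleport}, and property~(1) of $\mathcal V_j$, one identifies a portal vertex $v$ at which $\overline W$ and $\overline{W^*}$ are adjacent in $\Gamma$, so that the colour assigned to $\overline W$ is preserved by the teleport; the Zipping Lemma then upgrades this to the statement that $W$ is a boundary wall of $g_{i'}Z_{i'}$ of colour $<j$ on the same side as for $g_iZ_i$, whence convexity of $g_{i'}Z_{i'}$ places the whole piece in the correct half-space.

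The main obstacle is precisely this colour-compatibility step, because the equivalence classes $[-]_e$ only control colours of walls within a bounded radius in $\Gamma$ of the portal wall; walls $W$ whose image in $\mathcal X$ lies far from $\overline{W^*}$ are not seen directly by the teleport and must be handled by propagating colour agreement through property~(1) along edge paths inside $g_{i'}Z_{i'}$, or, when $W$ happens to be disjoint from $g_{i'}Z_{i'}$, by directly invoking convexity of $g_{i'}Z_{i'}$ to place it on a single side of $W$.
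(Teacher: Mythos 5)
Your overall strategy is the same as the paper's: the candidate supporting half-spaces come from the boundary walls of colour $<j$ of the individual pieces $g_iZ_i$, and the substance of the proof is the claim that each such wall keeps all of $\tilde Y$ on one side, established by induction along the tree of gluings from Lemma \ref{muwelldefined} with a colour-transfer argument across each portal. The preliminary reduction via shortest edge paths and Proposition \ref{edgegeodesic} is a harmless repackaging of the observation that any edge leaving $\tilde Y$ must cross one of these walls.

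The crux, however, is left open: you correctly note that the teleport only controls colours of walls whose image is within radius $j$ of $\overline{W^*}$ in $\Gamma$, and you offer two escape routes (colour propagation via property (1), or disjointness plus convexity) without showing that between them they cover every boundary wall $W$ of $g_iZ_i$ of colour $<j$. The missing observation --- which is exactly how the paper closes this gap --- is that the correct dichotomy is $W\cap W^*=\emptyset$ versus $W\cap W^*\neq\emptyset$, and that the troublesome-looking configuration ($W$ meeting the next piece while $\overline W$ is far from $\overline{W^*}$ in $\Gamma$) cannot occur. Since $W$ is convex and contains the midpoint of an edge with an endpoint in $g_iZ_i$, it has points strictly on the $g_iZ_i$-side of $W^*$; so if $W$ also met the closed half-space beyond $W^*$ anywhere off $W^*$ itself, it would have to cross $W^*$. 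Hence either $W\cap W^*=\emptyset$, in which case $W$ lies entirely on the near side of $W^*$ and everything glued on beyond $W^*$ stays on the $g_iZ_i$-side of $W$ by separation alone (no colours needed), or $W\cap W^*\neq\emptyset$, in which case $d(\overline W,\overline{W^*})=0\leq R$, so $\overline W$ and $\overline{W^*}$ are adjacent in $\Gamma$ and the teleport does determine the colour of $\overline W$. In the latter case the paper further invokes Lemma \ref{intersectbwall} to produce a square spanned by edges dual to $W$ and $W^*$ at a vertex of $Z_i$; this yields a vertex of the next piece incident at an edge dual to the translate of $W$, which together with the transported colour $<j$ and property (2) shows $W$ is a boundary wall of the next piece on the correct side. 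Without this dichotomy your first fallback has nothing to propagate from, so the proposal as written is not yet a proof; with it, the argument closes and coincides with the paper's.
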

\begin{proof}
 For each $g_i Z_i$ in $\tilde{Y}$ as in Definition \ref{tildeTX}, and boundary wall $W$ of $Z_i$ of colour $<j$, consider the wall $g_iW$. We claim that $\tilde{Y}$ is an intersection of half-spaces corresponding to these walls. Any edge leaving $\tilde{Y}$ must cross one of these walls, so it suffices to show that each of these walls has a half-space containing $\tilde{Y}$. Indeed consider $g_i Z_i$ and $g_i W$ as above, and consider $P_i$ and $g_{i+1} Z_{i+1}$ as from Definition \ref{tildeTX}, so that we have $g_i Z_i$ glued to $g_{i+1} Z_{i+1}$ along $g_i P_i$. Let $P_i$ lie in a wall $W_i$ and let $M$ be the part of $\tilde{Y}$ on the opposite side of $g_i W_i$ to $g_i Z_i$. We will show that $M$ and $g_i Z_i$ lie entirely on the same side of $g_i W$.
 
 If $g_i W\cap g_i W_i=\emptyset$ then we are done. Now suppose $g_i W$ and $g_i W_i$ intersect, then apply Lemma \ref{intersectbwall} to find a vertex $x\in Z_i$ incident at edges $e, e_i$ which are dual to $W$ and $W_i$ respectively, and form the corner of a square in $X$. Let $g_{P_i}y$ be the vertex at the other end of $e_i$ (so $y\in Z_{i+1}$), then $g_{P_i}y$ is incident at an edge dual to $W$. $P_i$ is a $g_{P_i}$-teleport of some portal $P'_i\subset Z_{i+1}$, so
 \begin{equation*}
 [e_i,c^i_{e_i}]=[e_i,g_{P_i} c^{i+1}_{e_{i+1}}]
 \end{equation*}
 where $e_{i+1}:=g_{P_i}^{-1} e_i$ and $c^i, c^{i+1}$ are the colourings for $Z_i, Z_{i+1}$. So
 \begin{equation*}
 j>c_{e_i}(\overline{W})=g_{P_i} c^{i+1}_{e_{i+1}}(\overline{W})=g_{P_i} c^{i+1}_{y}(\overline{W}).
 \end{equation*}

 Thus $g_{P_i}^{-1} W$ is a boundary wall of $Z_{i+1}$ of colour $<j$, so $Z_{i+1}$ and $y$ lie entirely on the same side of it. In turn $g_{P_i} Z_{i+1}$ and $Z_i$ lie entirely on the same side of $W$, and $g_{i+1} Z_{i+1}$ and $g_i Z_i$ lie entirely on the same side of $g_i W$. Iterating this argument along branches of $M$ shows that $M$ and $g_i Z_i$ lie entirely on the same side of $g_i W$ as required.
 \begin{figure}[H]
 	\centering
 	\definecolor{RED}{HTML}{FF2A2A}
 	\definecolor{BLUE}{HTML}{0000FF}
 	\definecolor{GREEN}{HTML}{aad400}
 	\begin{overpic}[width=.55\textwidth,clip=true]{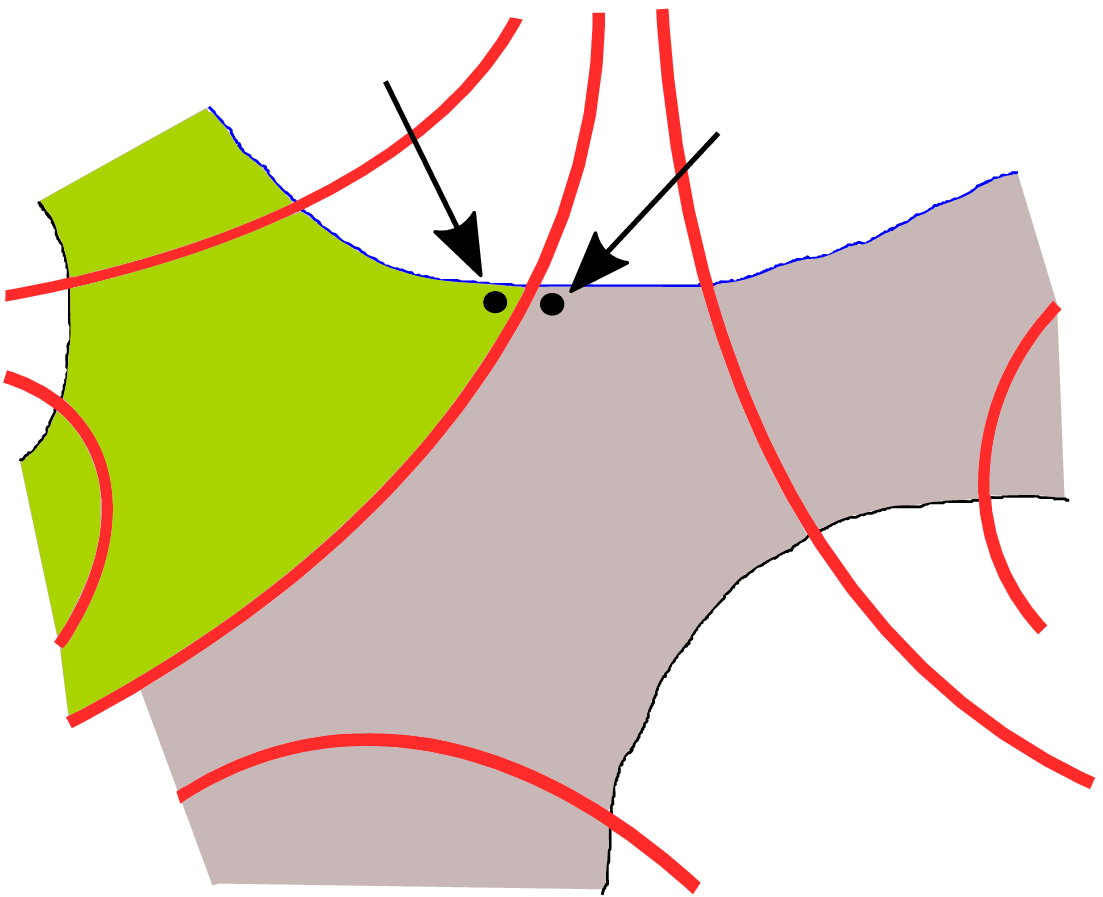}
 		\put(14,43){\huge$g_{i+1} Z_{i+1}$}
 		\put(46,30){\huge$g_i Z_i$}
 		\put(3,70){\huge\textcolor{GREEN}{$M$}}
 		\put(25,23){\huge\textcolor{RED}{$g_i W_i$}}
 		\put(25,77){\huge$g_ig_{P_i}y$}
 		\put(68,70){\huge$g_i x$}
 		\put(75,54){\huge\textcolor{BLUE}{$g_i W$}}
 	\end{overpic}
 \end{figure}
 
 \end{proof}

 Lastly we check the numbered properties of $\mathcal{V}_{j-1}$ from Section \ref{sec:startgluing}:
 \begin{enumerate}[(1)]
 \item If $e\in E(X)$ joins vertices $x,y\in\tilde{Y}$, we need $[e, c^Y_x]=[e,c^Y_y]$ to hold. This is clearly true if $e$ is contained within a translate $g_iZ_i$ because $c^Y$ is a translate of the colourings for $Z_i$. From the proof of Lemma \ref{muwelldefined}, we know that the different $g_iZ_i$ translates are separated by walls in a tree structure, and that two translates are adjacent only if they are of the form $g_iZ_i$ and $g_{i+1} Z_{i+1}$ as in Definition \ref{tildeTX}. We may assume that $h_{i+1}=1$, as changing $h_{i+1}$ doesn't change the translate $g_{i+1} Z_{i+1}$ - so $g_{i+1}=g_i g_{P_i}$. If $e$ crosses from $g_iZ_i$ to $g_{i+1} Z_{i+1}$, then it crosses the portal translate $g_iP_i=g_{i+1}P'_i$, and can be written $e=g_ie_i=g_{i+1}e'_i$ for $e_i$ dual to $P_i$ and $e'_i$ dual to $P'_i$. Suppose $c^i,c^{i+1}$ are the colourings for $Z_i,Z_{i+1}$, and that $x$ is the end of $e$ in $g_i Z_i$. Then putting $x=g_ix_i$ and $y=g_{i+1}y'_i$ we have
 \begin{align*}
 [e,c^Y_x]&=[e,g_ic^i_{x_i}]\\
 &=g_i[e_i,c^i_{x_i}]\\
&=g_i[e_i,c^i_{e_i}]\\
&=g_i g_{P_i}[e'_i,c^{i+1}_{e'_i}]&\text{since $P_i$ is a $g_{P_i}$-teleport of $P'_i$}\\
&=g_{i+1}[e'_i,c^{i+1}_{y'_i}]\\
&=[e,g_{i+1}c^{i+1}_{y'_i}]\\
&=[e,c^Y_y].
 \end{align*}
 \item Given $e\in E(X)$ joining vertices $x,y$ with $x\in\tilde{Y}$, $c^Y_x(e)>j$ if and only if $e$ is contained within some translate $g_iZ_i$; $c^Y_x(e)=j$ if and only if $e$ crosses a portal translate into a different $g_iZ_i$; so $y\in\tilde{Y}$ if and only if $c^Y_x(e)>j-1$ as required.
 \item Let $(H\cdot e, Z)\in\mathcal{V}_j^\pm(f,c)$, with $(Z,H,(c_x))\in\mathcal{V}_j$ and $Z/H$ in a component $Y$ of $\mathcal{Y}$. We have $Z=Z_i$ and $g_i Z_i\subset\tilde{Y}$ for some $g_i$ as in Definition \ref{tildeTX}, so we get $(H(Y)\cdot g_i e,\tilde{Y})\in\mathcal{V}_{j-1}^\pm(f,c)$. Moreover, $\mu:\tilde{Y}\to Y$ is a universal covering with deck transformation group $H(Y)$, so all choices of $g_i$ put $g_i e$ in the same $H(Y)$-orbit. This means that we have a bijection $\mathcal{V}_j^\pm(f,c)\to \mathcal{V}_{j-1}^\pm(f,c)$ for all $(f,c)$, and so the Gluing Equations hold in $\mathcal{V}_{j-1}$.
 \item Finally, $H(Y)\cong\pi_1(Y)\in\mathcal{QVH}$ by Lemma \ref{YQVH}.
 \end{enumerate}

\end{document}